\providecommand{\abs}[1]{\left|#1\right|}
\providecommand{\norm}[1]{\lVert#1\rVert}
\providecommand{\re}[1]{\text{Re}\left(#1\right)}
\providecommand{\im}[1]{\text{Im}\left(#1\right)}
\newtheorem*{remark*}{Remark}
\newtheorem*{definition*}{Definition}
\newtheorem*{theorem*}{Theorem}
\newtheorem{theorem}{Theorem}
\newcounter{saveeqn}
\providecommand{\keywords}[1]{\textbf{\textit{Keywords:}} #1}
\begin{document}
\pagestyle{plain}
\title{The Numerical Unified Transform Method for Initial-boundary Value Problems on the Half-line}

\author[$\dagger$]{Bernard Deconinck}
\author[$\ddag$]{Thomas Trogdon}
\author[${\S}$]{Xin Yang \thanks{Corresponding author.}}
%\author[$\dagger$]{Xin Yang}
\affil[$$]{Department of Applied Mathematics, University of Washington, Seattle, WA 98195}
\affil[$$]{$^{\dag}$deconinc@uw.edu, $^{\ddag}$trogdon@uw.edu, $^{\S}$yangxin@uw.edu}
\maketitle

\begin{abstract}
We implement the Unified Transform Method of Fokas as a numerical method to solve linear partial differential equations on the half-line. The method computes the solution at any $x$ and $t$ without spatial discretization or time stepping.  With the help of contour deformations and oscillatory integration techniques, the method's complexity does not increase for large $x,t$ and the method is more accurate as $x,t$ increase. Our goal is to make no assumptions on the functional form of the initial or boundary functions while maintaining high accuracy in a large region of the $(x,t)$ plane. %For dissipative equations, the global absolute error can be controlled with spectral accuracy. 
\end{abstract}
\keywords{linear partial differential equations; numerical unified transform method; method of steepest descent; numerical oscillatory integrals}

\section{Introduction}
Standard methods for solving linear partial differential equations (PDEs), including separation of variables and classical integral transforms, are often limited by the order of the PDE and the type of boundary conditions. The unified transform method (UTM), also known as the Method of Fokas~\cite{fokas1997}, is a relatively new method for analyzing a large family of PDEs with general initial and boundary conditions~\cite{fokas2002a}.  When applied to initial boundary value problems (IBVPs) for linear, constant coefficient PDEs, the UTM provides the solutions in terms of contour integrals involving the given initial and boundary conditions~\cite{deconinck2014}. This does not only give rise to new analysis but it also provides a new direction for numerical methods. With this integral representation of the solution, it is possible to compute the solution at any $x,t$ directly. The numerical unified transform method (NUTM) is a numerical method built upon the solution formula from the UTM with the addition of systematic contour deformations. In stark contrast to classical numerical PDE methods such as finite-difference methods, spectral methods and finite-element methods, the NUTM can solve equations in unbounded domains and it does not experience  accumulation of errors or stability issues. These issues that appear in standard numerical methods for evolutionary PDEs do not appear in the NUTM because spatial discretization and time stepping are not required. %The UTM can also be applied to many other cases such as linear elliptic pdes, equations with more than one spatial dimensions, equations with non-constant coefficients and nonlinear integrable equations. We will focus on linear evolution equations in this paper.

Since the first paper on the NUTM in 2008 \cite{flyer2008}, the method has been applied to the heat equation $q_t=q_{xx}$ on the half-line \cite{flyer2008,fokas2009} and on finite intervals \cite{papa2009}, to the Stokes equations $q_t\pm q_{xxx}=0$ on the half-line \cite{flyer2008} and on finite intervals \cite{kesici2018}, and the advection-diffusion equation $q_t+q_x=q_{xx}$ on the half-line \cite{barros2019}. These applications of the NUTM use fixed contours that do not depend on $(x,t)$ and rely on knowing closed-form expressions for the transforms of initial and boundary data. We refer to such implementations of the NUTM as fixed contour methods (FCMs). As we will see in Section \ref{exheat}, FCMs become less accurate for large $x,t$. %However, there are situations that need to be taken into consideration to make the NUTM more competitive to traditional methods. 
%Our goal is to make less restrictive assumptions on the given initial and boundary conditions 
In contrast to those FCMs, we propose a new implementation of the NUTM that uses contours depending dynamically on $x,t$ and that does not severely restrict the initial or boundary conditions. Our goal is to make no assumptions on the functional form of the initial or boundary functions, other than to restrict them to be in certain function spaces ({\it i.e.}, impose specific decay). We maintain high accuracy in a large region of the $(x,t)$ plane. To summarize, we build up the NUTM to include the following features:
\begin{enumerate}
\item The assumptions on the initial and boundary conditions are significantly weakened compared to the FCM. Decay and regularity conditions are necessary for the purpose of achieving high accuracy. We emphasize that {\bf closed-form expressions for the transforms of initial or boundary conditions are not required}.
\item The method is {\bf uniformly accurate} in that the computational cost to compute the solution at a point $(x,t)$ with given accuracy remains bounded for large $x,t$.
\item The method is {\bf spectrally accurate} in that the error at fixed $(x,t)$, $E_{\text{NUTM}}(N,x,t)=\mathcal{O}(1/N^l)$  for any integer $l$, where $N$ is the number of function evaluations. For certain equations such as the heat equation, it is possible to achieve spectral accuracy uniformly as long as $(x,t)$ are bounded away from $x=0$ and $t=0$.  
\end{enumerate}
These features exist in the numerical inverse scattering transform (NIST) we have implemented for nonlinear integrable PDEs on the whole line \cite{trogdon2012a,trogdon2012b,yang2019}. %We wish to have a numerical method for the IBVP of  linear, constant-coefficient PDEs with these properties. 
Having studied the solution of the IVP of nonlinear integrable equations and the solution of the IBVP of linear constant coefficient equations, we are set up to understand the numerical issues associated with IBVPs for nonlinear integrable PDEs~\cite{fokas2002b,fokas2005}. Ultimately, we wish to compute the solution of the IBVP of nonlinear integrable equations using the NUTM in a similar fashion.

The paper is organized as follows: Section \ref{sec_prelim} gives a brief overview of the UTM and the methods for oscillatory integrals that are required in what follows. In Section \ref{ch_heat} we discuss the NUTM for the heat equation where the deformation is based on the method of steepest descent. In Section \ref{ch_lnls} we discuss the NUTM applied to the linear Schr\"odinger equation where methods other than the method of steepest descent are needed. In Section~\ref{ch_lkdv}, we show how to apply the NUTM to a third-order PDE with an advection term giving rise to integrands with branch points. Numerical examples are provided throughout. In many examples the initial and boundary conditions are chosen to have closed-form transforms for the purpose of computing the true solution for comparison. An example with the boundary condition which does not have a known expression for the transform is shown at the end of Section 5.2. The proof of the uniform convergence of the NUTM applied to the heat equation is given in the Appendix.

%--------------------------------------
%example with ICs and BCs that they could never do. Something silly
%like $exp(-e^(t^2) sin(t))$ or anything else you want to cook up.
%
%In the whole line problems, the UTM becomes the same as solving the problem using Fourier transform.
%-------------------------------------------
\section{Preliminaries}
\label{sec_prelim}
\subsection{The unified transform method on the half-line}
 Consider a linear PDE written as
\begin{align}
q_t+\omega(-i\partial_x) q =0,
\label{linearpde}
\end{align}
for $x,t>0$. We assume $\omega(k)$ to be a polynomial of degree $p$. Note that $q(x,t)=e^{ikx-\omega(k)t}$ satisfies (\ref{linearpde}). This definition of the dispersion relation $\omega$ typically used in the UTM differs from the common convention by a factor of $i$. The UTM solves IBVPs for (\ref{linearpde}) using transforms of the initial and boundary values,
\begin{align}
\hat{q}_0(k) &= \int_0^{\infty}e^{-ikx}q_0(x,0)dx,\\
\label{qhat}
\tilde{g}_0(\omega(k),t)&=\int_0^t e^{\omega(k) s}q(0,s)ds,\\
\label{gtilde}
\vdots \nonumber \\
\tilde{g}_{p-1}(\omega(k),t)&=\int_0^t e^{\omega(k) s}\frac{\partial^{p-1} q}{\partial x^{p-1}}(0,s)ds.
\end{align}
The number of boundary conditions required for a well-posed problem is determined by the UTM. It is based on the order of  the highest spatial derivative as well as the leading coefficient of $\omega$ \cite{flyer2008}. The solution formula from the UTM depends on contour integrals of the type
\begin{align*}
I_m&=\int_{\mathcal{C}^J_m}e^{ikx-w(k)t}\hat{q}_0(\nu_m(k))dk, \quad  m=1,2,\ldots,p,  \\
B_m&=\int_{\mathcal{C}^I_m}e^{ikx-w(k)t}f_m(k) \hat{g}_m(\omega(k),t)dk, \quad  m=0,1,\ldots, n,
\end{align*}
where $p$ is the degree of $\omega(k)$ and $\nu_m(k)$ is its $m$th symmetry\footnote{A symmetry $\nu(k)$ of $\omega(k)$ satisfies $\omega(\nu(k))=\omega(k)$. The symmetries play an important role in the UTM. %Although they can have branch points for polynomial dispersion relations, 
The $n$ symmetries  $\{\nu_m(k):m=1,2,\dots,n \}$ exist by the fundamental theorem of algebra, and can be chosen to be analytic outside a compact set \cite{utmbook}.}, and $f_m(k)$ is a function explicitly determined by $\omega(k)$, independent of the initial and boundary data.  Thus, the solution to (\ref{linearpde}) can be computed by quadrature. However, the integrands on the contours $\mathcal{C}^I_m$ and $ \mathcal{C}^B_m$ obtained by the UTM are often highly oscillatory, and suitable methods must be applied for an accurate solution.

\subsection{Methods for oscillatory integrals}
The exponential factor $e^{ikx-w(k)t}$ in the integrand is the main cause of oscillations. Deformations based on the method of steepest descent \cite{millerbook} change the oscillations into exponential decay. Define the phase function $\theta(k;x,t)=ikx-w(k)t$. Saddle points $k_0$ satisfy
\[
\frac{d\theta(k;x,t)}{dk}\Big|_{k=k_0}=0.
\]
Near $k=k_0$,
\[
\theta(k;x,t)=ik_0x-w(k_0)t-\frac{w''(k_0)t}{2}(k-k_0)^2 +\mathcal{O}(k-k_0)^3.
\]
The integrand is (locally) exponentially decaying if $k$ follows a path such that $-w''(k_0)t(k-k_0)^2/2$ is negative and decreasing.  
Since the integrals along the deformed paths are exponentially localized near the saddle point, they can be computed with high accuracy with standard quadrature methods after appropriate truncation. For improved accuracy, Gauss-Hermite or Gauss-Laguerre quadratures are suitable, depending on the form of the exponentials and the paths \cite{gibbs2019,huybrechs,uspensky1928}. We choose Clenshaw-Curtis quadrature for the deformed contour integrals for convenience, as it is spectrally accurate and efficient in most cases \cite{trefethen2008}. We note that there are situations where the deformations are restricted and the method of steepest descent is not applicable, see Sections \ref{ch_lnls} and \ref{ch_lkdv}.     

The region in the complex $k-$plane where the contour can be deformed depends on the analyticity of the transform data $\hat{q}_0(k)$ and $\hat{g}_m(\omega(k),t)$ which is related to the decay rate of the initial and boundary data. For instance, when $q(x,0)$ and $q(0,t)$ are integrable, $\hat{q}_0$ is analytic and bounded in the lower-half plane $\{k\in \mathbb{C}:\im{k} < 0\}$ and $\hat{g}_0(\omega(k),t)$ is analytic and bounded in $\{k\in \mathbb{C}:\re{\omega(k)} < 0\}$. Data with faster decay gives more freedom to deform the contour. We consider data with exponential decay rate $\delta>0$, defined by
 \[
 C_\delta^m=\left\{f \in C^{m}([0,\infty)),~  \exists \delta'>\delta,~\text{such that}~\sup_{x \in [0,\infty)} e^{\delta' x} \abs{f(x)} <\infty  \right\}.
 \]
\paragraph{Remark.} For $f\in  C_\delta^m$ , we have $ \int_{0}^{\infty} e^{\delta'' x}\abs{f(x)}dx <\infty$ with $\delta''=\frac{\delta'+\delta}{2}>\delta$. The boundedness is introduced for convenience in the proofs in the appendix and the implied integrability is used to allow deformation of contours not just in the interior of regions but also to their boundaries.\\

If the initial condition $q_0 \in  C_\delta^m $, then $\hat{q}_0$ is analytic and bounded in a open set containing $\{k\in \mathbb{C}:\im{k} \leq \delta\}$. Therefore contour integrals of $\hat{q}_0(k)$ can be deformed inside a larger region.
When the contours get close to the boundary of regions in which they can be deformed, highly oscillatory integrals of the form
\begin{align}
S(x,t)=\int_{k_0}^{\infty} f(k) e^{\theta(k;x,t)}dk,
\label{oscint}
\end{align}
appear. %The $\omega(k)$ is a polynomial given by the dispersion relation and 
Here $f(k)$ is, in general, not analytically extendable off the real axis, $k=k_0$ is the critical point of $\theta(k;x,t)$ and $\omega(k)\in i\mathbb{R}$. This integral is highly oscillatory when the parameters $x,t$ are large, and therefore with traditional numerical quadrature methods the cost to achieve a desired accuracy increases as $x,t$ increase. Fortunately, there are methods specific to highly oscillatory integrals, such as Filon-type and Levin-type methods, that are more accurate as oscillations increase, with a fixed number of evaluations of the integrand \cite{iserles2006}. Hence it is still possible to attain uniform accuracy without an increasing computational cost. On the other hand, unlike in the method of steepest descent, the global error over all $x,t$ does not, in general, decay spectrally. While we do compute solutions at arbitrarily large $x,t$ with increasing accuracy as $x,t$ increase, improvements over our methodology in the computation of integrals of the type given in~(\ref{oscint}) will improve the overall efficiency of our method. Some possible directions for the improved evaluation of~(\ref{oscint}) are: 
\begin{enumerate}
\item Better computational methods for oscillatory integrals that can achieve higher order of accuracy, and 
\item Faster solvers that can handle more nodes/modes like the ultraspherical polynomial spectral method~\cite{olver2012}. 
\end{enumerate}
We emphasize that our work focuses on the integrals from the UTM and therefore we focus on analyticity and decay of the integrands and possible contour deformations. A complete discussion of the treatment of~(\ref{oscint}) is beyond the scope of this paper as any improvement is not only relevant to the NUTM but is also worth studying for its own sake. %Though we may not have chosen the best methods for the oscillatory integrals% 

In order to make use of the path of steepest descent to obtain exponential localization, we avoid computing the solution with arbitrarily small $x$ or $t$. Hence in discussion about uniform accuracy, we assume $x,t\geq c$ for some constant $c>0$. We choose $c=0.1$ in most examples for convenience.
\paragraph{Remark.} The NUTM is less efficient for small $x$ or $t$. We can use  extrapolation and Taylor expansions to get $q(x,t)$ with small $x$ or $t$ \cite{trogdon2019}. Traditional time-stepping methods can be powerful and convenient if the number of time steps is small.\\ %There are important related topics such as computing the Dirichlet to Neumann map, that we do not consider here. \\

%As for computing the transforms of initial and boundary data $\hat{q}, \tilde{g}$, the overall accuracy does not only depend on the contour integrals in the solution formula (\ref{solheat2}) but also on the evaluations of the transformed data (\ref{qhat}) and (\ref{gtilde}). 
Methods for oscillatory integrals are also needed for computing the transforms $\hat{q}, \tilde{g}$.
These transformed data are Fourier-type integrals that can be handled efficiently by Levin's method. In Figure \ref{heatq0levin},  the absolute errors for $\hat{q}_0(x+i)$ for $q_0(x)=e^{-2x}$ are plotted. The number of collocation points $N=40$ is the same for Levin's method and for Clenshaw-Curtis quadrature. The values start to diverge for large $x$ for Clenshaw-Curtis quadrature when the oscillations are under resolved but Levin's method provides reliable approximations with decreasing errors.
\begin{figure}
  % Requires \usepackage{graphicx}
  \makebox[\textwidth][c]{
  \includegraphics[width=0.5\textwidth]{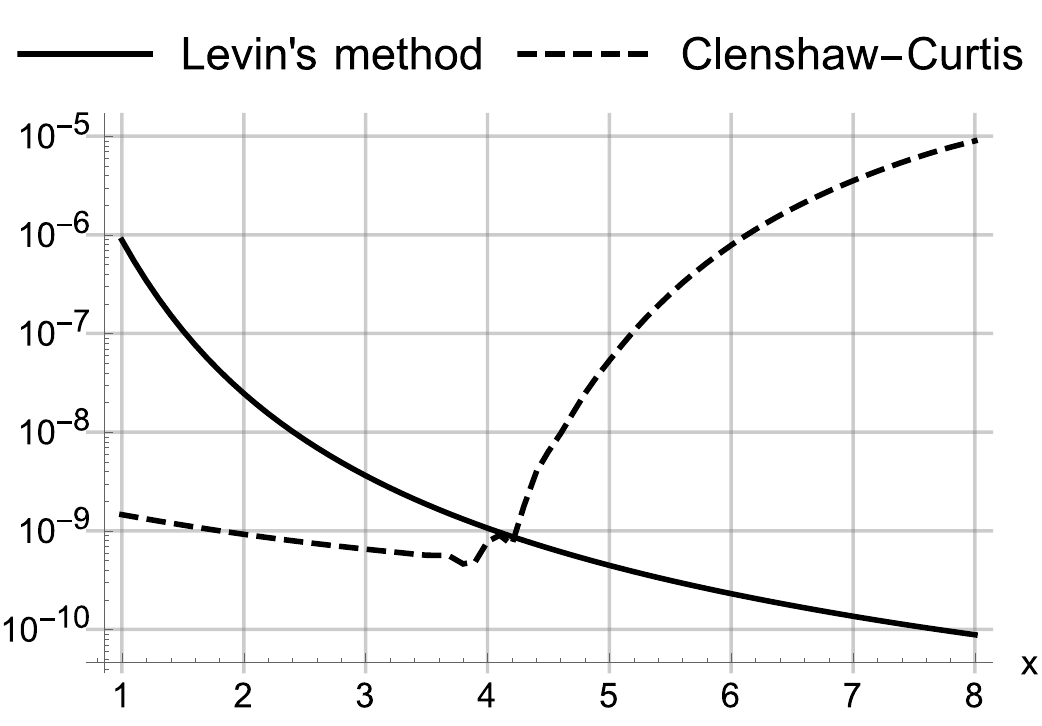}
  }
  \caption{The absolute errors for the computation of $\hat{q}_0(x+i)$ for $x\in [1,8]$. The curves are computed using  Clenshaw-Curtis quadrature (dashed) and Levin's method (solid). Both methods use a fixed number of nodes $N=40$. The initial data is $q_0(x)=e^{-2x}$.}
  \label{heatq0levin}
\end{figure}

\section{The heat equation on the half-line}
\label{ch_heat}
We consider the heat equation on the half-line,
\begin{align}
q_t=q_{xx}, ~~~t>0,\,~~ x>0,
\label{heateq}
\end{align}
with Dirichlet boundary data $q(0,t)=g_0(t)$ and initial data $q(x,0)=q_0(x)$. The dispersion relation for the heat equation is $\omega(k)=k^2$. The initial data $q_0$ is assumed to be in $ C_\delta^{\infty}$ for some  $\delta>0$ and the boundary data $g_0$ is assumed to be in $ C_\gamma^{\infty}$ for some  $\gamma>0$.  The smoothness of $q_0,~g_0$ allows us to compute the transformed data $\hat{q},~\tilde{g}$ accurately. The rate of decay affects the regions where the deformation of the integration path is allowed. The same methodology can still be applied, with less efficiency and accuracy, when weaker conditions are satisfied. 
\paragraph{Remark.}
It is possible to deal with non-decaying boundary data when the asymptotics of the data is known and can be handled by some other method. The UTM for linear PDEs with piecewise-constant data is studied in \cite{trogdon2019}. Since the equation is linear, if the data is given as a superposition of data, it may then be beneficial to obtain the solution of the full problem as a superposition of solutions corresponding to individual pieces of data. For instance, suppose $g_0(t)=h_1 + h_2(t)$ where  $h_1$ is a constant and $h_2 \in C_\delta^{\infty}$. The transform $\hat{h}_1(k,\infty)=-1/k^2$ is a meromorphic function in $\mathbb{C}$ and there is no restriction about where the integral contour for $\hat{h}_1(k,\infty)$ can be deformed if the residue is collected correctly. The full solution is easily obtained by superimposing the NUTM solutions for the problems corresponding to $h_1$ and $h_2$ separately. 

 \subsection{The solution formula from the unified transform method}
The solution to the heat equation on the half-line with Dirichlet boundary condition is 
\begin{align}
q(x,t)=\frac{1}{2\pi}\int_{-\infty}^{\infty} e^{ikx-\omega(k)t}\hat{q}_0(k)dk-\frac{1}{2\pi}\int_{\partial D^+}e^{ikx-\omega(k)t}\left[\hat{q}_0(-k)+2ik\tilde{g}_0(\omega(k),t)\right]dk,
\label{solheat2}
\end{align}
where the contour $\partial D^+=\{re^{i\pi/4}:r\in [0,\infty)\}\cup \{re^{3i\pi/4}:r\in [0,\infty)\}$ is the boundary of the region $D^+=\{(re^{iu}):r\in (0,\infty),u\in (\pi/4,3\pi/4)\}$, shown in Figure \ref{heatcpa} \cite{utmbook}. The transformed data $\hat{q}_0(k)$ and $\tilde{g}_0(\omega(k),t)$ are defined by (\ref{qhat}) and (\ref{gtilde}) respectively.
\begin{figure}
     \centering
     \begin{subfigure}{0.45\textwidth}     
     \includegraphics[width=1. \textwidth]{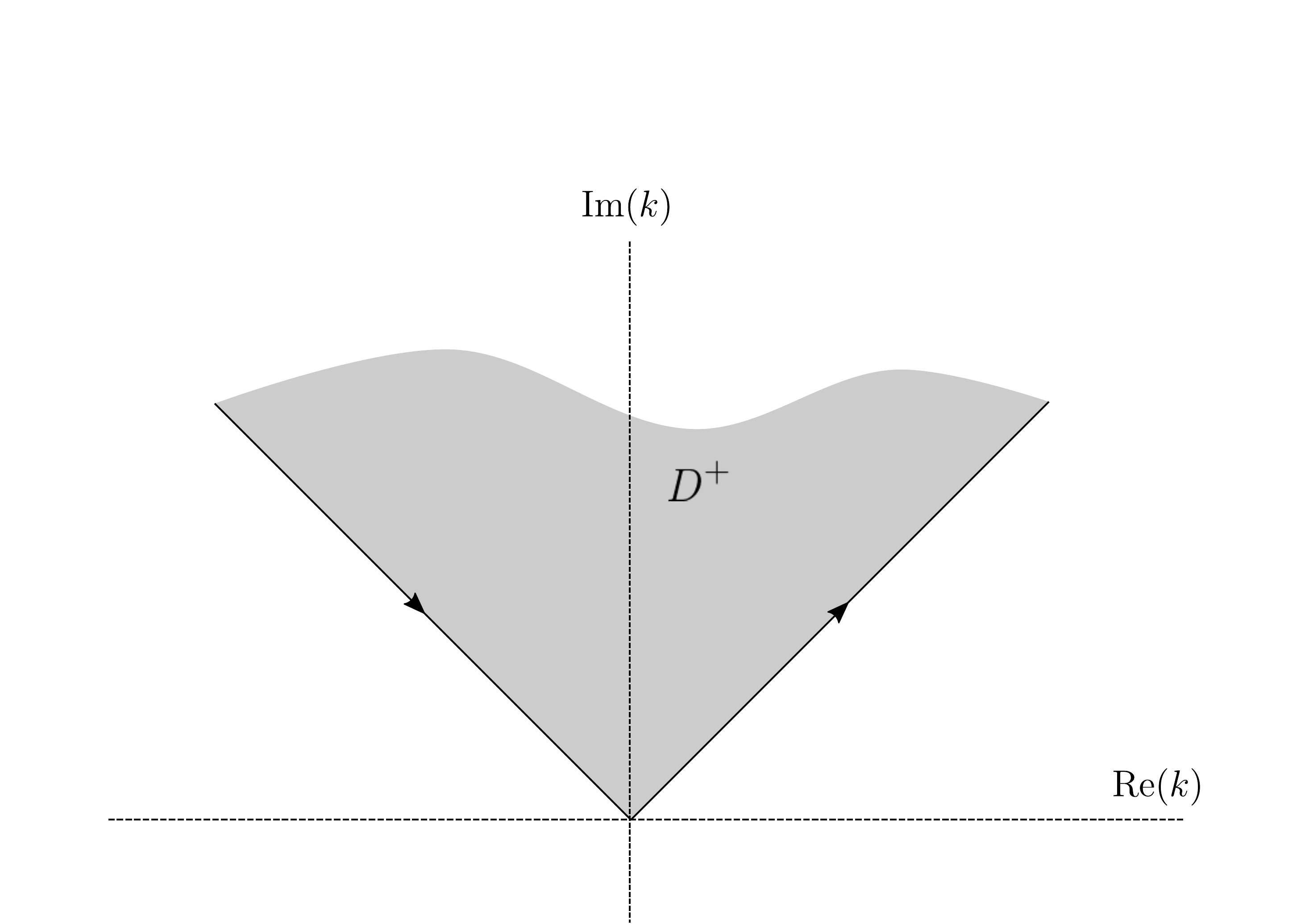}
     \caption{}
     \end{subfigure}
     \begin{subfigure}{0.45\textwidth}     
     \includegraphics[width=1. \textwidth]{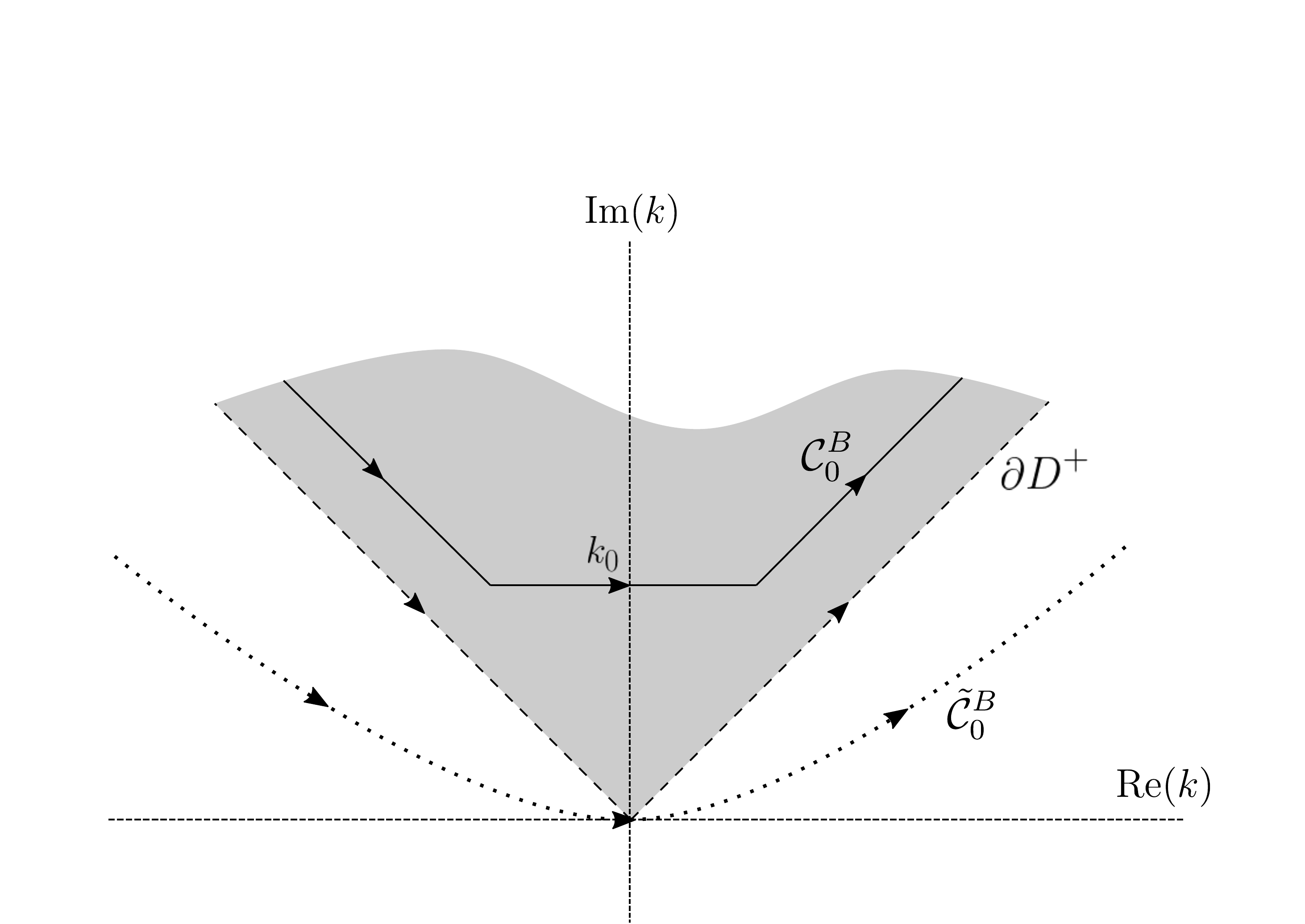}
     \caption{}
     \end{subfigure}
      \caption{Regions for the heat equation. Panel (a) shows the region $D^+=\{\re{k^2}<0\} \cap \mathbb{C}^+$. Panel (b) shows different integral paths for $B_0$: (i) $\partial D^+$: the undeformed contour  (dashed),  (ii) $\mathcal{C}_0^B$: the deformed contour across the saddle point $k_0$ (solid), and (iii) $\tilde{\mathcal{C}}_0^B$: the deformed contour used in \cite{flyer2008} (dotted).}
      \label{heatcpa}
 \end{figure}

\begin{figure}
     \centering
      \includegraphics[width=0.7\textwidth]{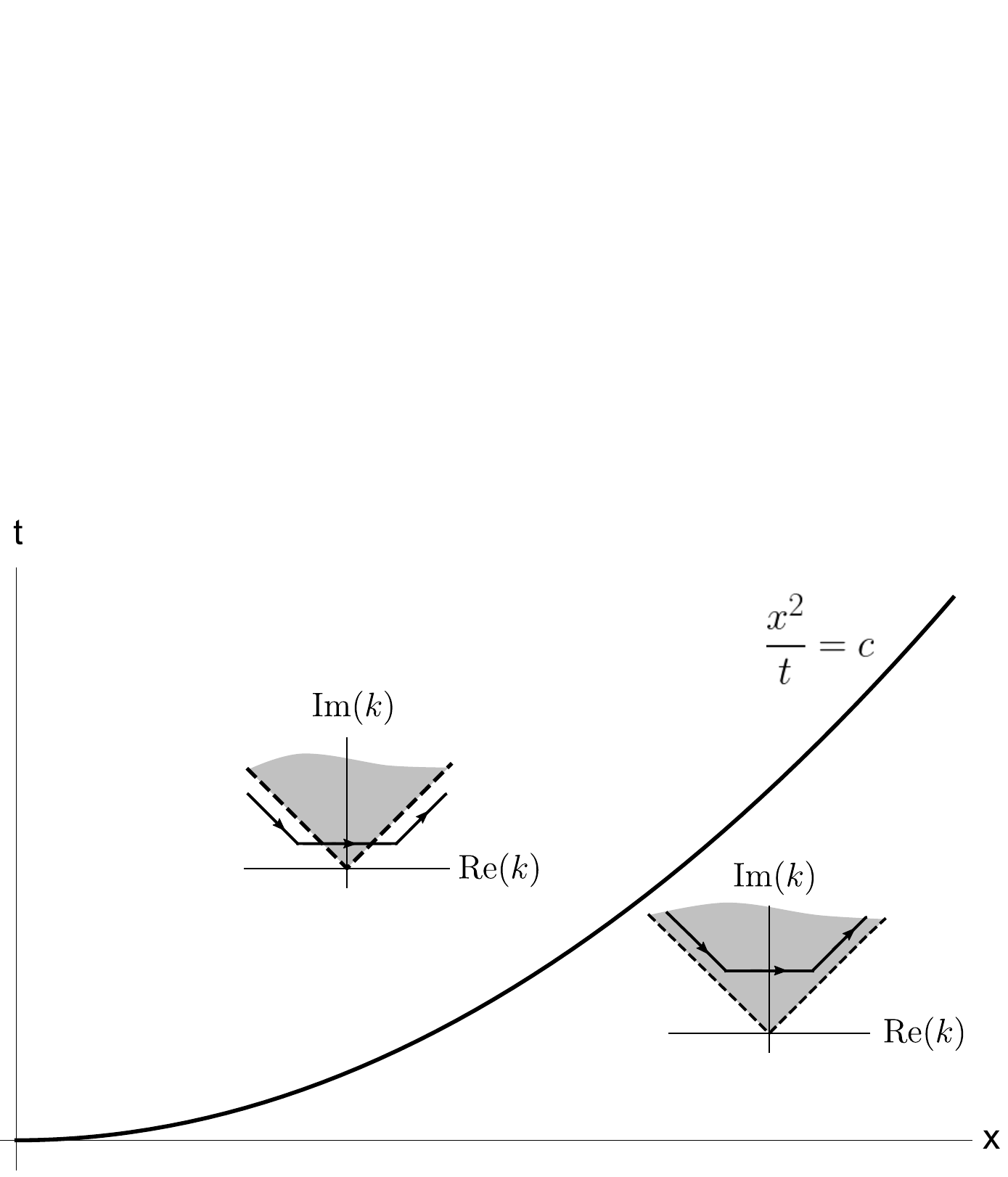}
	  \caption{Deformed contours for the heat equation. Depending on the values of $(x,t)$, the deformed contour for $B_0$ can be inside or outside of $ \partial D^+$. Solid lines represent the deformed contours. Dashed lines give $\partial D^+$, where $D^+$ is the shaded region. }
 	 \label{heatcpb}
\end{figure}

Using the classical sine transform \cite{deconinck2014},
\begin{align}
q(x,t)=\frac{2}{\pi} \int_0^{\infty} e^{-\omega(k)t} \sin(kx) \left[\sin(ky)q(y,0)dy-k\tilde{g}_0(\omega(k),t)\right]dk.
\label{solheat1}
\end{align}
The equivalence of the expressions is shown by deforming the contour of (\ref{solheat2}) back to the real line. The reason we do not work with (\ref{solheat1}) is twofold:
 \begin{enumerate}
\item Deforming the contour back to the real axis is possible only when classical transforms exist. Generally speaking, classical transforms do not exist for dispersive equations.
%\item The sine transform solution \ref{solheat1} is not uniformly convergent at $x=0$.
\item It is more straightforward to apply the method of steepest descent numerically to (\ref{solheat2}) than it is to (\ref{solheat1}).
\end{enumerate}
%Hence, we build our numerical method on (\ref{solheat2}).
%Though strictly speaking the solution is not obtained via the IST, we still call the numerical method the NIST as the key component is to use the method of (nonlinear) steepest descent to get uniform accuracy for all parameters $x,t$.
\subsection{Deformations of contours based on the method of steepest descent}
\label{sec_heatdeform}
We write the solution (\ref{solheat2}) as
\begin{align}
q(x,t)&=I_1+I_2+B_0,
\label{heatsolintg}
\end{align}
where
\begin{align*}
I_1&=\frac{1}{2\pi}\int_{-\infty}^{\infty} e^{ikx-\omega(k)t}\hat{q}_0(k)dk,\\
I_2&=-\frac{1}{2\pi}\int_{\partial D^+}e^{ikx-\omega(k)t} \hat{q}_0(-k)dk,\\
B_0&=-\frac{1}{2\pi}\int_{\partial D^+}e^{ikx-\omega(k)t} 2ik\tilde{g}_0(\omega(k),t)dk.
\end{align*}
The associated deformed contours for $I_1,I_2$ and $B_0$ will be defined by $\mathcal{C}_1^I,\mathcal{C}_2^I$ and $\mathcal{C}_0^B$ respectively in the following sections. In \cite{flyer2008}, for the FCM, the deformed contour $\tilde{\mathcal{C}}_0^B$ is independent of $(x,t)$, and is proposed for all three integrals $I_1,I_2$ and $B_0$. It is a hyperbola parameterized by $s\in \mathbb{R}$, shown in Figure \ref{heatcpb},
\begin{align}
k(s)=i \sin ( \pi/8 - i s).
\label{ffcontour}
\end{align} 
This contour $\tilde{\mathcal{C}}_0^B$ is also used in \cite{barros2019,fokas2009,papa2009} for different types of advection-diffusion equations. There are two major drawbacks of using $\tilde{\mathcal{C}}_0^B$: (i) the integrands of $I_1,B_0$ are not  defined on all of $\tilde{\mathcal{C}}_0^B$, and (ii) the evaluation of the integral along $\tilde{\mathcal{C}}_0^B$ quickly loses accuracy when $t$ increases as it does not follow the direction of steepest descent and large oscillations and potential growth destroy accuracy. To fix these issues with  FCMs, we use different deformations of the contours for $I_0,I_1$ and $B_0$ and the contours are deformed to follow the direction of steepest descent as much as possible.

\subsubsection{$I_1$: The integral involving $\hat{q}_0(k)$}
\label{sec_heatj1}
The phase function in the integrand is
\begin{align}
\theta(k;x,t)=ikx-\omega(k)t=ikx-k^2 t.
\end{align}
There is one saddle point $k_0=ix/2t$ where $\theta'(k_0;x,t)=0$ on the imaginary axis. The phase function $\theta(k;x,t)$ can be rewritten as
\[
\theta(k;x,t)= ikx-k^2 t = -t(k-ix/2t)^2-x^2/4t.
\]
The direction of steepest descent, along which the magnitude of $e^{\theta(k;x,t)}$ decays exponentially, is horizontal.
%For the integrand of the part 1 in (\ref{solheat2}),
%\[
%\frac{1}{2\pi}\int_{-\infty}^{\infty} e^{ikx-k^2t}\hat{q}_0(k)dk,
%\]
%the exponential function is analytic in $k$. Since the transform of the initial data
%\[
%\abs{\hat{q}_0(k)}=\abs{\int_0^{\infty} e^{-ikx}q_0(x)dx}\leq
%\int_0^{\infty} e^{\text{Re}(-ik-\delta)x}\abs{q_0(x)}e^{\delta x}dx
%\]
%and its derivative
%\[
%\abs{\frac{d}{dk}\hat{q}_0(k)}=\abs{\int_0^{\infty} -ix e^{-ikx}q_0(x)dx}\leq
%\int_0^{\infty} xe^{\text{Re}(-ik-\delta)x}\abs{q_0(x)}e^{\delta x}dx,
%\]
%are bounded if $\mbox{Im}(k)\leq \delta$. Therefore $\hat{q}_0(k)$ is analytic and bounded $\mbox{Im}(k)\leq \delta$.
If Im$(k_0)=x/2t > \delta$,  the contour cannot be deformed to pass through the saddle point $k_0$ because the transform of the initial data $q \in C_{\delta}^{\infty} $ is only guaranteed to be defined for Im$(k)\leq \delta$. However, there is exponential decay in the integrand when the path is along the horizontal line Im$(k)= \delta$ since $t>0, x>0$. Hence the deformed path that we choose is a horizontal line $\mathcal{C}_1^I=\{k\in \mathbb{C}:\im{k}=h\}$, with $h=\min(\delta,x/2t)$.  
\begin{align*}
I_1 =-\frac{1}{2\pi}\int_{-\infty}^{\infty} e^{ikx-\omega(k)t}\hat{q}_0(k)dk=-\frac{1}{2\pi}\int_{\mathcal{C}_1^I} e^{ikx-k^2 t}\hat{q}_0(k)dk.
\end{align*}
The uniform convergence of Clenshaw-Curtis quadrature applied to $I_1$ for $x,t\geq c$ is established in Theorem~\ref{thmi1} (Appendix), after proper truncation and rescaling.

\begin{figure}
  % Requires \usepackage{graphicx}
  \makebox[\textwidth][c]{
  \includegraphics[width=0.5\textwidth]{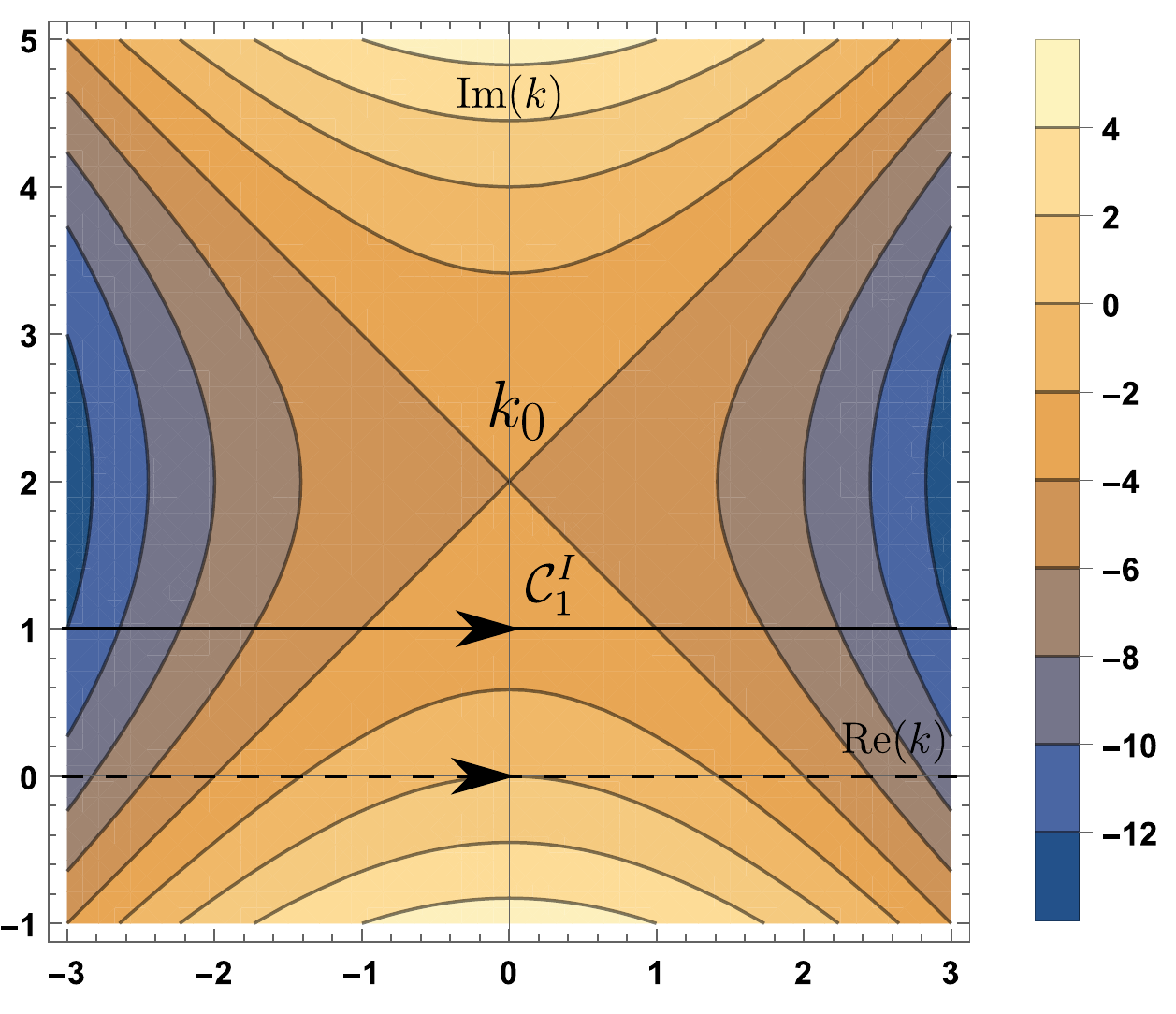}
  }
  \caption{The deformed horizontal contour $\mathcal{C}_1^I$ (solid) passing through $i$ with $\delta=1$, $x=4,t=1$, $k_0=2i$. The undeformed contour (dashed). The background contour plot shows the level sets of $\mbox{Re}(\theta(k,x,t))$. The integrand of $I_1$ is analytic for $\im{k}<1$ when $q_0\in C_1^{\infty}$.}
\end{figure}

\subsubsection{$I_2$: The integral involving $\hat{q}_0(-k)$}
\label{sec_heatj2}
Similar analysis can be applied to $I_2$ in (\ref{solheat2}). Here
\[
I_2=-\frac{1}{2\pi}\int_{\partial D^+}e^{ikx-\omega(k)t}\hat{q}_0(-k)dk.
\]
Because $\hat{q}_0(-k)$ is analytic and bounded for Im$(k)>-\delta$, we can deform the contour $\partial D^+$ to the horizontal line passing through $k_0=ix/2t$ defined by $\mathcal{C}_2^I=\{k\in\mathbb{C}: \im{k}=x/2t\}$, 
\[
I_2=-\frac{1}{2\pi}\int_{\mathcal{C}_2^I}e^{ikx-\omega(k)t}\hat{q}_0(-k)dk.
\]
The uniform convergence of Clenshaw-Curtis quadrature applied to $I_2$ for $x,t\geq c$ is established in Theorem~\ref{thmi1} (Appendix), after proper truncation and rescaling.
 
\begin{figure}
  % Requires \usepackage{graphicx}
  \makebox[\textwidth][c]{
  \includegraphics[width=0.5\textwidth]{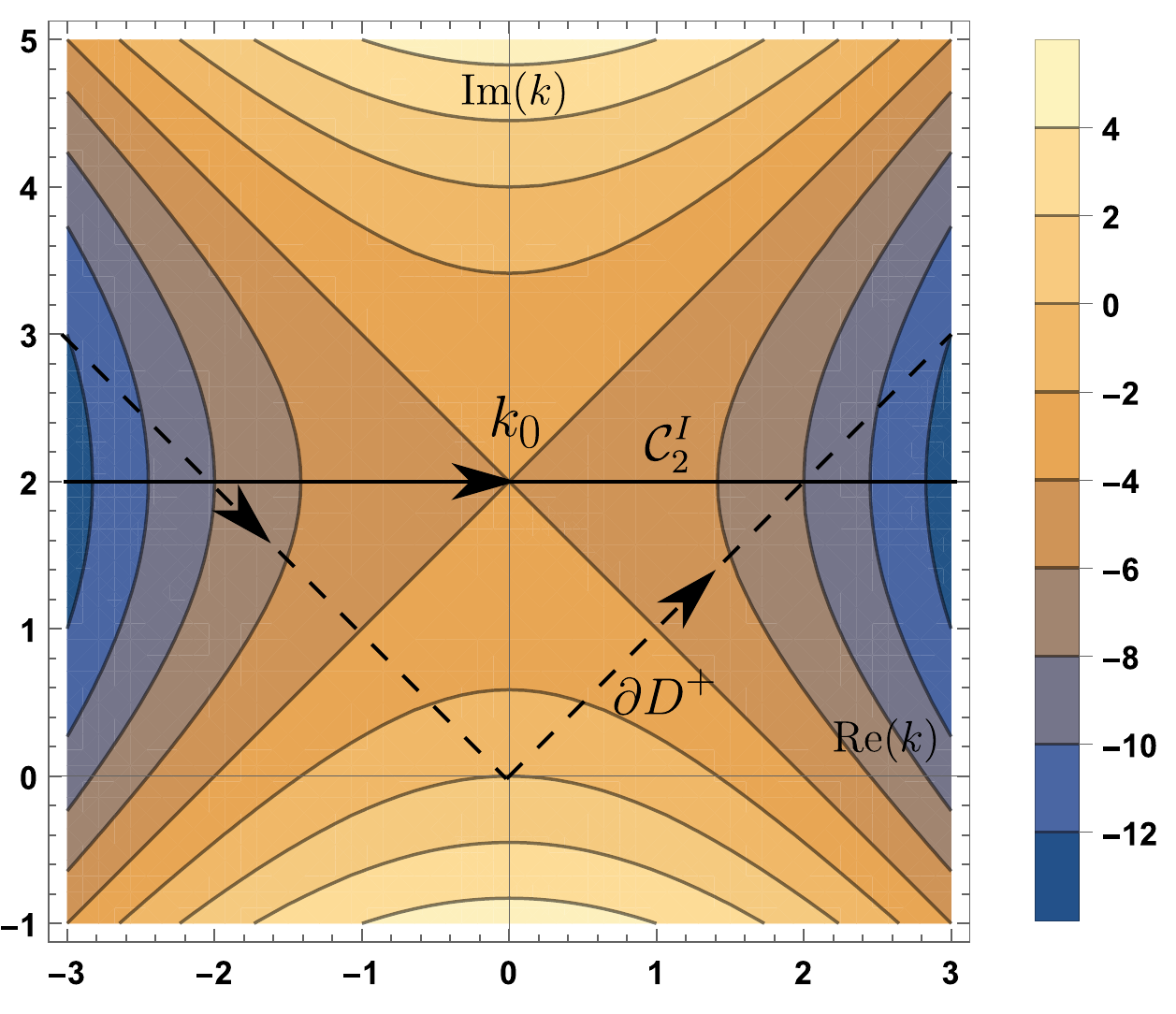}
  }
  \caption{The deformed horizontal contour for $I_2$ (solid) through $k_0=2i$ with $\delta=1$, $x=4,t=1$. The undeformed contour (dashed). The background contour  plot shows the level sets of $\mbox{Re}(\theta(k,x,t))$. The integrand of $I_2$ is analytic for $\im{k}>-1$. The dashed line is the undeformed contour $\partial D^+$.}
\end{figure}

\subsubsection{$B_0$: The integral of the transform of boundary data $\tilde{g}_0(\omega(k),t)$}
\label{sec_heati3}
The integral  $B_0$ in (\ref{solheat2}) containing the boundary data is more complicated compared to the integrals $I_1,I_2$.  There are two important factors that require special treatment:
\begin{enumerate}
\item The parameter $t$ appears both in the exponential and in the transformed boundary data $\tilde{g}(\omega(k),t)$ and therefore the phase $\theta(k,x,t)$ alone does not describe the decay of the integrand in $B_0$.
\item The evaluation of $e^{-k^2 t}\tilde{g}(\omega(k),t)$ is ill conditioned due to the oscillations and growth in $\tilde{g}(\omega(k),t)$ canceling those from the exponential.
\end{enumerate}

\paragraph{Example.}
To get a more concrete understanding, consider $g_0(t)=e^{-t}$. The transform is
\[
\tilde{g}_0(\omega(k),t)=\int_0^{t}e^{k^2 s}e^{-s}ds=\frac{1}{k^2-1}\left(e^{(k^2-1)t}-1\right).
\]
Since $g_0(s)=e^{-s}$ is bounded on the finite interval $0\leq s \leq t$, the transformed data $\tilde{g}_0(\omega(k),t)$ is an entire function of $k$ with removable poles at $k=\pm 1$. %$\tilde{g}_0(\omega(k),t)\approx t $ near $k^2-1=0$.
The integrand of $B_0$ contains two terms,
\begin{align}
e^{ikx-\omega(k)t} 2ik\tilde{g}_0(\omega(k),t)=\frac{ike^{ikx-t}}{\pi(k^2-1)}-\frac{ike^{ikx-k^2t}}{\pi(k^2-1)}.
\label{twoterm1}
\end{align}
The second term follows the horizontal direction of steepest descent but the first term is not exponentially localized on  horizontal lines in the complex $k$-plane. Although the integral of the first term on the $\partial D^+$ is zero, it is not possible to separate the two terms, in general, for all $k$. %Though $\tilde{g}(k)$ is bounded inside $\{\re{k^2-1}\leq 0\}$, the bound does increase as $t$ grows. Near the removable pole, $I_{3}|_{k=1-\sqrt{t}} \propto \sqrt{t}$. The removable poles will be located on $\partial D^+$ in the case of the constant boundary data which implies that decay requirement on the boundary data is necessary if one wish to get uniform accuracy for all time.
\paragraph{General case.}
We write the transform $\tilde{g}_0(k,t)$ as
\[
\tilde{g}_0(\omega(k),t)=\int_0^{t}e^{k^2 s}g_0(s)ds=-\int_{t}^{\infty}e^{k^2 s}g_0(s)ds+\int_0^{\infty}e^{k^2 s}g_0(s)ds,
\]
for $k \in D^+$.
Therefore the integrand in $B_0$ is
\begin{align}
e^{ikx -k^2 t}\tilde{g}_0(\omega(k),t)=-e^{ikx}\int_{0}^{\infty}e^{k^2 s}g_0(s+t)ds + e^{ikx -k^2 t}\tilde{g}_0(\omega(k),\infty).
\label{twoterm2}
\end{align}
The two terms on the right-hand side of (\ref{twoterm2}) behave the same as the two terms in (\ref{twoterm1}). Because $\tilde{g}_0(\omega(k),\infty)$ is in general not defined outside $D^+$, a separation only exists inside $D^+$. Without splitting the two terms, to get exponential decay for both terms, the contour $\partial D^+$ is deformed to $\mathcal{C}_0^I$ passing through the saddle point horizontally and turns up when the second term in the integrand is negligible, see Figure \ref{heatcontour3}. 
The corner point $k_1=\pm L+ix/2t$ is determined by $L=\max(L_1,\sqrt{\gamma})$ with specified tolerance $\epsilon$ where $\abs{e^{-L_1^2 t}}=\epsilon$ characterizes the exponential decay and $\sqrt{\gamma}$ allows the oblique segment to be away from $k=0$. With this choice of contour, the exponential part in the second term decays exponentially along the horizontal segment and keeps the same magnitude along the oblique segment while the exponential part in the first term keeps the same magnitude along the horizontal segment and decays exponentially along the oblique segment. Uniform accuracy is shown in Theorem \ref{thmi3} (Appendix) after proper truncation and rescaling.

\begin{figure}
  % Requires \usepackage{graphicx}
  \makebox[\textwidth][c]{
  \includegraphics[width=0.5\textwidth]{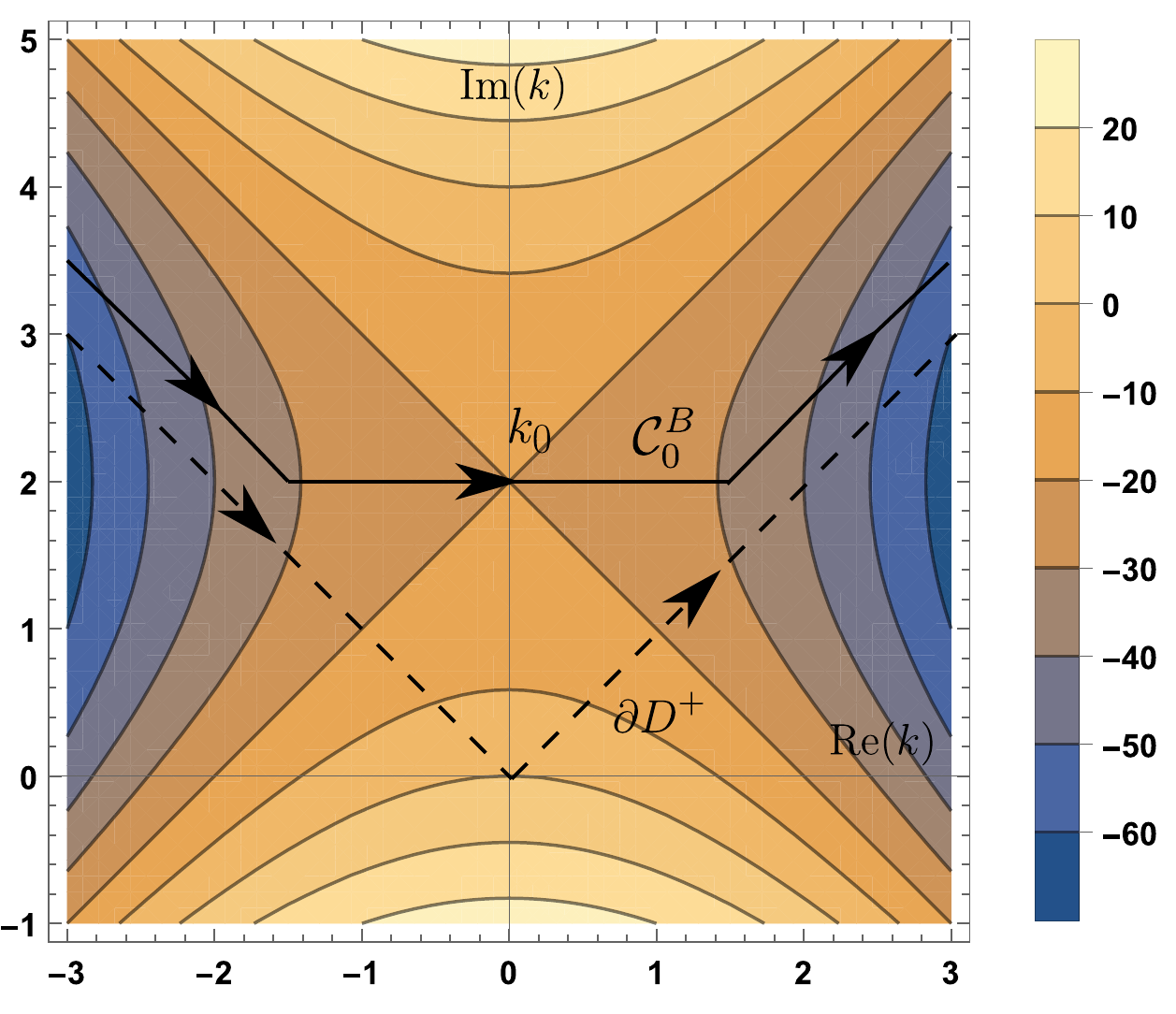}
  }
  \caption{The undeformed contour $\partial D^+$. The solid line gives the deformed contour  $\mathcal{C}_0^I$ of $I_0$ that goes through $k_0=2i$ and turns to rays parallel to $\partial D^+$ with $x=20,t=5$.  The integrand of $I_0$ is entire. The background contour  plot shows the level sets of $\mbox{Re}(\theta(k,x,t))$.}
  \label{heatcontour3}
\end{figure}

\subsection{A numerical example}
\label{exheat}
With these deformed contours, we can numerically evaluate the integrals efficiently for arbitrarily large  values of the parameters $x,t$. Figure \ref{heatsol} shows the solution to the heat equation with initial and boundary data $q_0(x)=e^{-x}$, $g_0(t)=e^{-t}$. Although exact transforms can be obtained for this choice of data, they are only used for computing the errors and our NUTM does not make use of the formulas.

To demonstrate the uniform accuracy for large $x,t$, we plot the absolute errors $E_{\text{NUTM}}$ and $E_{\text{FCM}}$ along 3 different curves (a) $t=0.1$, (b) $x=0.1$, and (c) $t=x^2$ in Figure \ref{heaterr}. The error $E_{\text{NUTM}}$ is obtained using the contours $\mathcal{C}_1^I,\mathcal{C}_2^I$ and $\mathcal{C}_0^B$. The error $E_{\text{FCM}}$ is obtained using the contour  $\tilde{\mathcal{C}}_0^B$ in (\ref{ffcontour}) \cite{flyer2008}. The initial and boundary conditions are $q_0(x)=e^{-10x}$, $g_0(t)=e^{-10t}$ to allow deformation in a larger region.  The number of collocation points $N=120$ is the same for both methods. This is a coarse grid for the integrals with the errors approximately $10^{-3}$ when $s=0.1$ is small but it shows the efficiency of the NUTM as $s$ grows. The true solution is computed using Mathematica's built-in numerical integration routine \verb|NIntegrate| along the undeformed contour $\partial D^+$ with sufficient recursions and precision. This is time consuming if the transforms of the initial and boundary data need to be computed. The truncation tolerance is set to $10^{-13}$ for determining the truncation of the deformed path. This value of the truncation tolerance is chosen so that it is small enough to show the trend of the errors when $x,t$ vary and the truncation is not affected by the rounding errors. These settings are the same for other examples in the paper unless stated otherwise.

The absolute error $E_{\text{NUTM}}$ decreases in all cases as $x,t$ grow while $E_{\text{FCM}}$ grows when $t$ increases. This can be explained simply by the fact that the contour used in the FCM does not follow the steepest descent path. Furthermore, even when $t$ is fixed in Figure~\ref{heaterr}(a), $E_{\text{FCM}}$ decreases slower than $E_{\text{NUTM}}$. On the other hand, $E_{\text{NUTM}}$ increases relative to the true solution. This is mainly due to the fact that the magnitude of the solution is smaller than the truncation tolerance for $x>5$ at which point the numerical solution has almost all contours truncated. In Figure~\ref{heaterr}(b-c), $E_{\text{NUTM}}$ maintains good relative accuracy. In Figure \ref{heaterr}(c), $E_{\text{NUTM}}$ starts with a larger error because $t=s^2=0.01$ is very small and close to the initial condition which requires more nodes to produce the same order of errors compared with the other two starting from $t=0.1$.      
\begin{figure}
  % Requires \usepackage{graphicx}
  \makebox[\textwidth][c]{
  \includegraphics[width=0.8\textwidth]{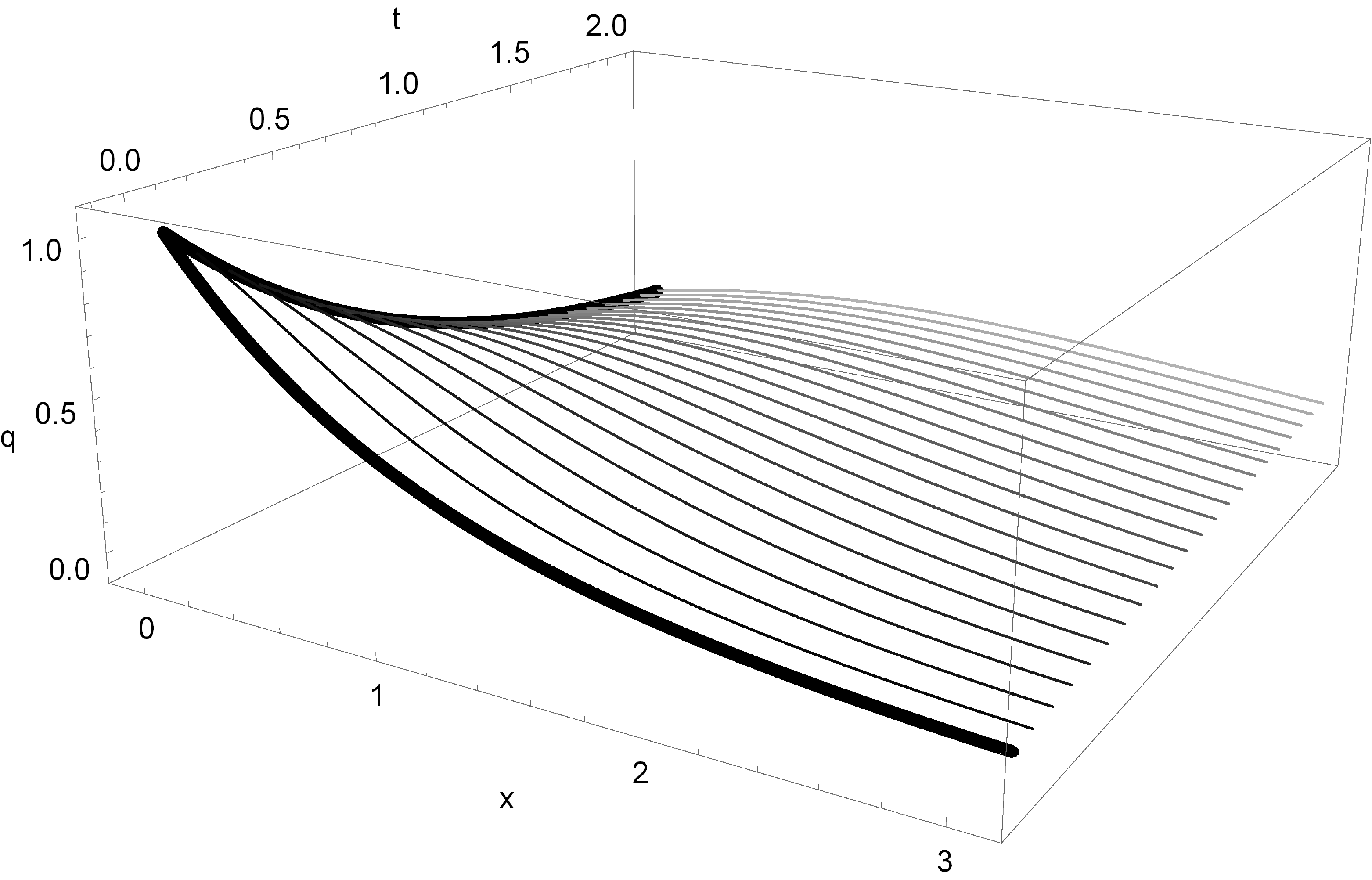}
  }
  \caption{The solution to the heat equation (\ref{heateq}) with exponential decay initial and boundary data $q(x,0)=e^{-x}$,  $q(0,t)=e^{-t}$. The bold curves are the initial and boundary conditions.}
  \label{heatsol}
\end{figure}

\begin{figure}
  % Requires \usepackage{graphicx}
  \makebox[\textwidth][c]{
  \begin{subfigure}{0.3\textwidth}
  \includegraphics[width=\textwidth]{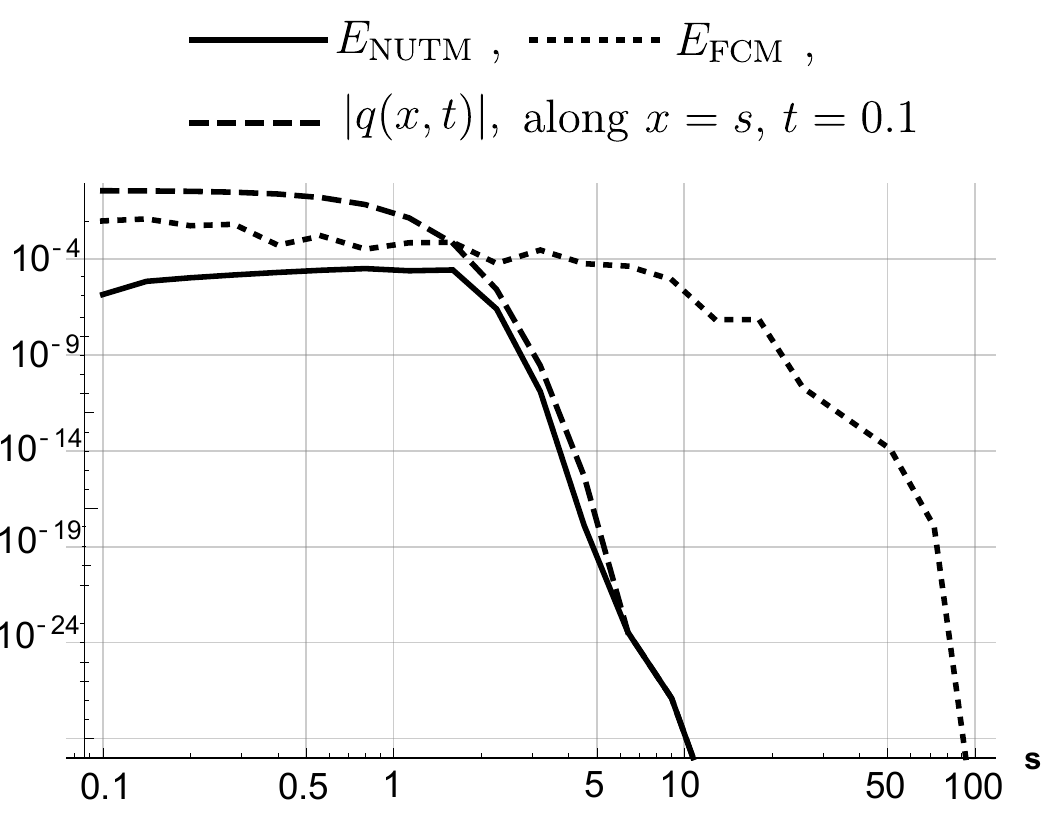}
  \caption{}
  \end{subfigure}
  \begin{subfigure}{0.3\textwidth}
  \includegraphics[width=\textwidth]{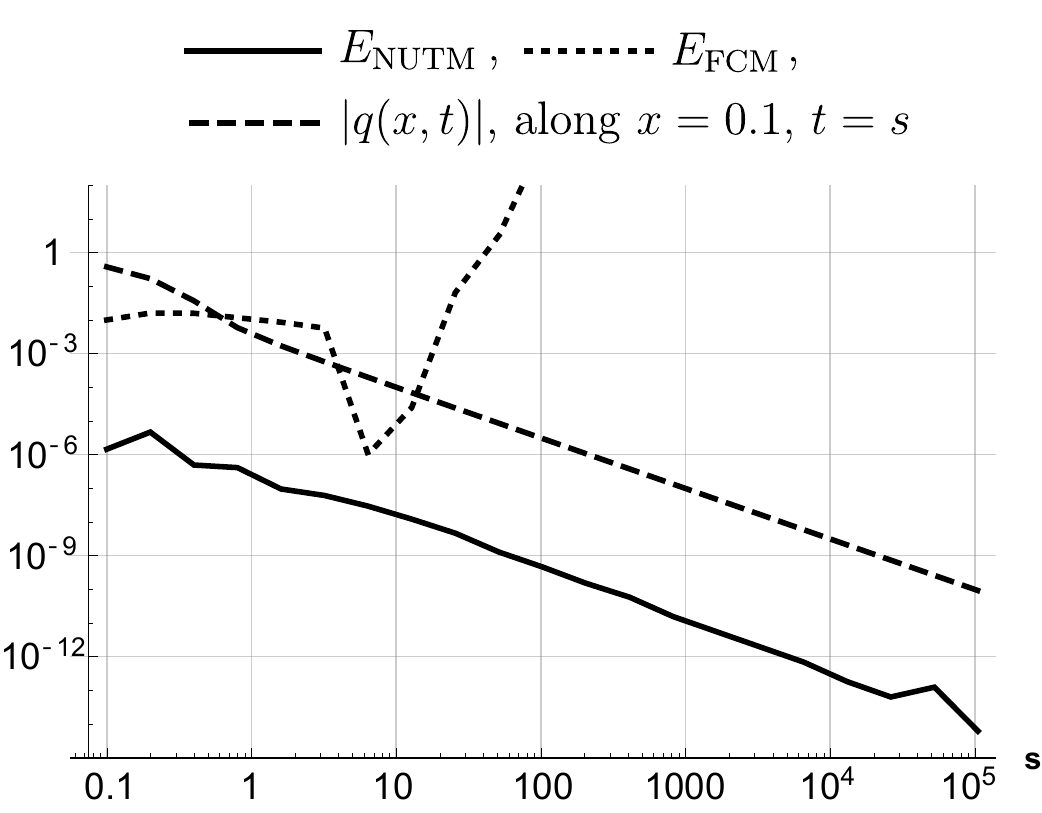}
  \caption{}
  \end{subfigure}
 \begin{subfigure}{0.3\textwidth}
  \includegraphics[width=\textwidth]{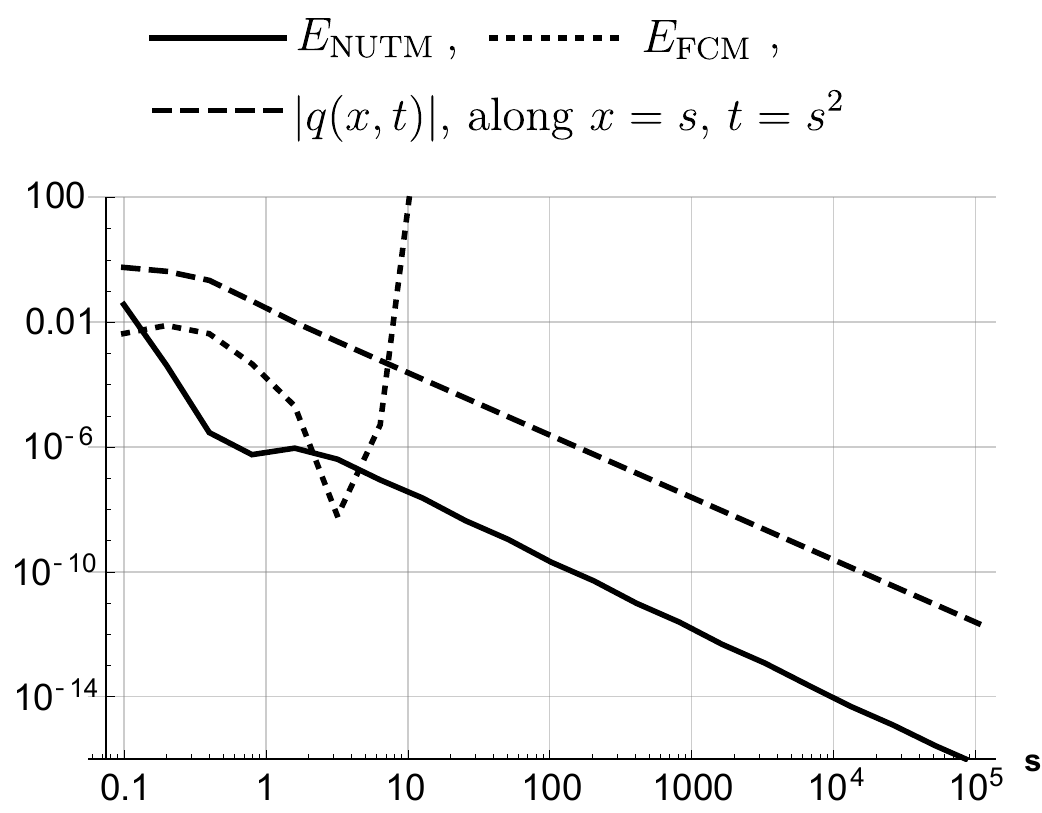}
  \caption{}
  \end{subfigure}

  }
  \caption{The absolute errors of the numerical solution to the heat equation with initial condition $q_0(x)=e^{-10 x}$ and boundary condition $g_0(t)=e^{-10 t}$ along (a) $x=s,t=0.1$, (b) $x=0.1, t=s$, (c) $x=s,t=s^2$ for $s\in [0.1,10^5]$. The error $E_{\text{NUTM}}$ is obtained using the contours $\mathcal{C}_1^I,\mathcal{C}_2^I$ and $\mathcal{C}_0^B$. The error $E_{\text{FCM}}$ is obtained using the contour $\tilde{\mathcal{C}}_0^B$ in Figure \ref{heatcontour3}. The absolute value of the solution $\abs{q(x,t)}$ is also plotted with dashed lines for reference. The FCM loses accuracy as $t$ grows while $E_{\text{NUTM}}$ decreases in all cases as parameters increase. }
  \label{heaterr}
\end{figure}

\paragraph{Remark.}
As can be seen in (\ref{twoterm1}) and (\ref{twoterm2}), there is large cancellation in the exponentials. To avoid  potential overflow/underflow problems, we use $\hat{g}_0(\omega(k),T)$ defined by   
\begin{align}
\hat{g}_0(\omega(k),T)=e^{-\omega(k)T}\tilde{g}_0(\omega(k),T)=\int_0^T e^{\omega(k) (s-T)}g_0(s)ds.
\end{align}

\section{The linear Schr\"odinger equation on the half-line}

\label{ch_lnls}
%The heat equation is dissipative and the solution flattens quickly. The contour of $B_0$ can be deformed to the steepest descent direction to get the maximum of exponential decay. 
%With a change of the diffusion coefficient from real to imaginary, the PDE from the previous section becomes dispersive and the linear Schr\"odinger (LS) equation is obtained:
Next, we consider a dispersive example, the linear Schr\"odinger (LS) equation:
\begin{align}
iq_t=-q_{xx},\,\,\,\,  x>0,\,\,t>0,
\label{lseq}
\end{align}
with Dirichlet boundary data $g_0 \in C^{\infty}_\gamma$ and initial data $q_0 \in C^{\infty}_\delta$. 
\subsection{The solution formula from the unified transform method}
The dispersion relation for (\ref{lseq}) is $\omega(k)=ik^2$. Define the transform of the initial data $\hat{q}_0(x)$ and the transform of the Dirichlet boundary data $\tilde{g}_0(t)$ by (\ref{qhat}) and (\ref{gtilde}).
The UTM provides the solution in terms of the following contour integrals \cite{deconinck2014},
\begin{align}
q(x,t)=\frac{1}{2\pi}\int_{-\infty}^{\infty} e^{ikx-\omega(k)t}\hat{q}_0(k)dk-\frac{1}{2\pi}\int_{\partial D^+}e^{ikx-\omega(k)t}\left[\hat{q}_0(-k)-2k\tilde{g}_0(\omega(k),t)\right]dk,
\label{solls}
\end{align}
where the contour $\partial D^+$ is the positively oriented boundary of the first quadrant $D^+=\{k\in \mathbb{C}: \re{k}\geq 0, \im{k}\geq 0  \}$.
With the assumption of the decay of $g_0(t)$, the contour can be deformed to the lower-half plane inside $\tilde{D}=\{k\in \mathbb{C}:\re{k^2}<\gamma \}$ as in Figure \ref{lscontour}. But this is not enough to completely eliminate the oscillations. In general, other methods for oscillatory integrals are required when $t$ is not sufficiently large or the saddle point $k_0$ has large modulus.  
\begin{figure}
  % Requires \usepackage{graphicx}
  \makebox[\textwidth][c]{
  \begin{subfigure}{0.45\textwidth}
    \includegraphics[width=\textwidth]{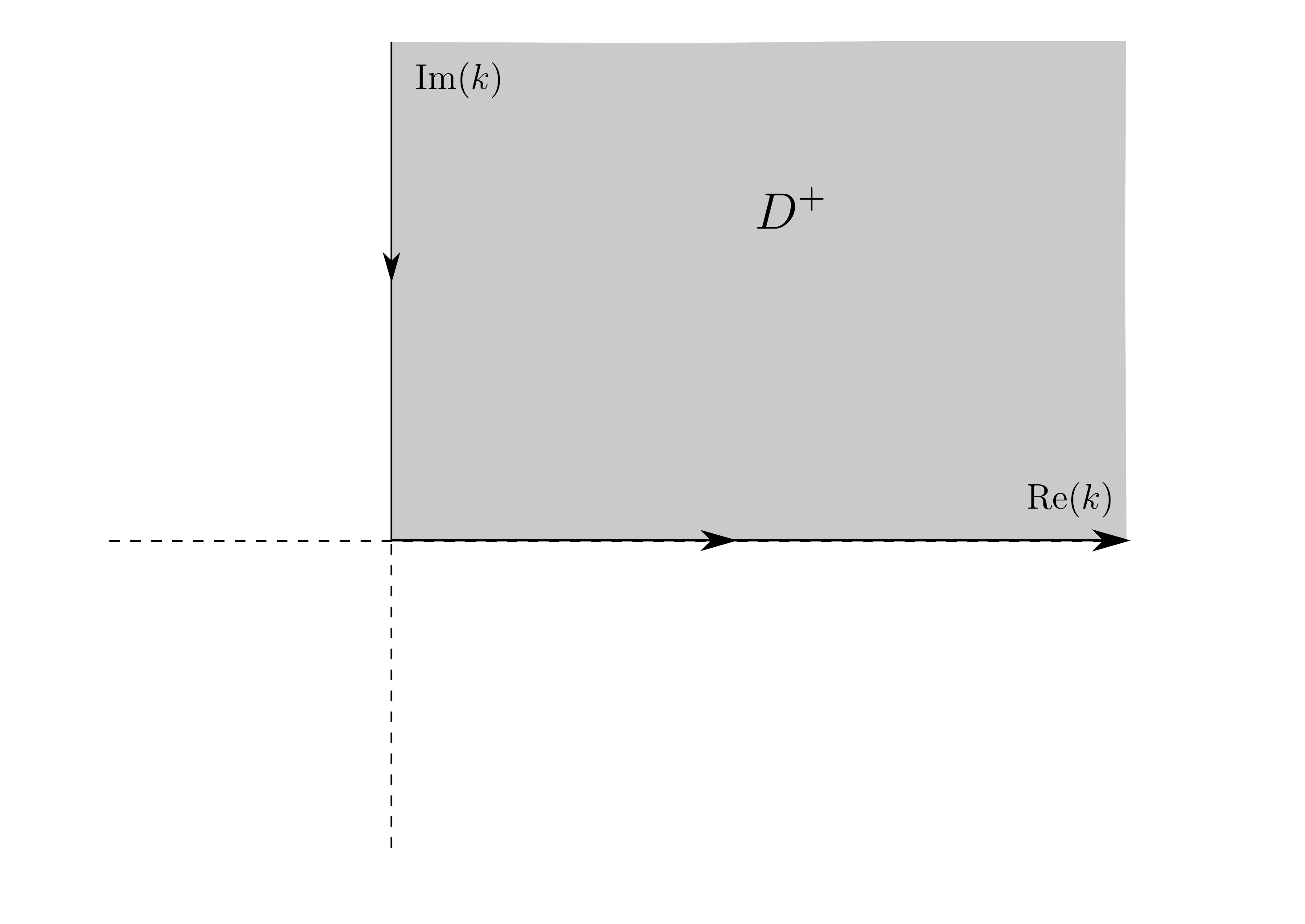}
    \caption{}
    \end{subfigure}
  \begin{subfigure}{0.45\textwidth}
        \includegraphics[width= \textwidth]{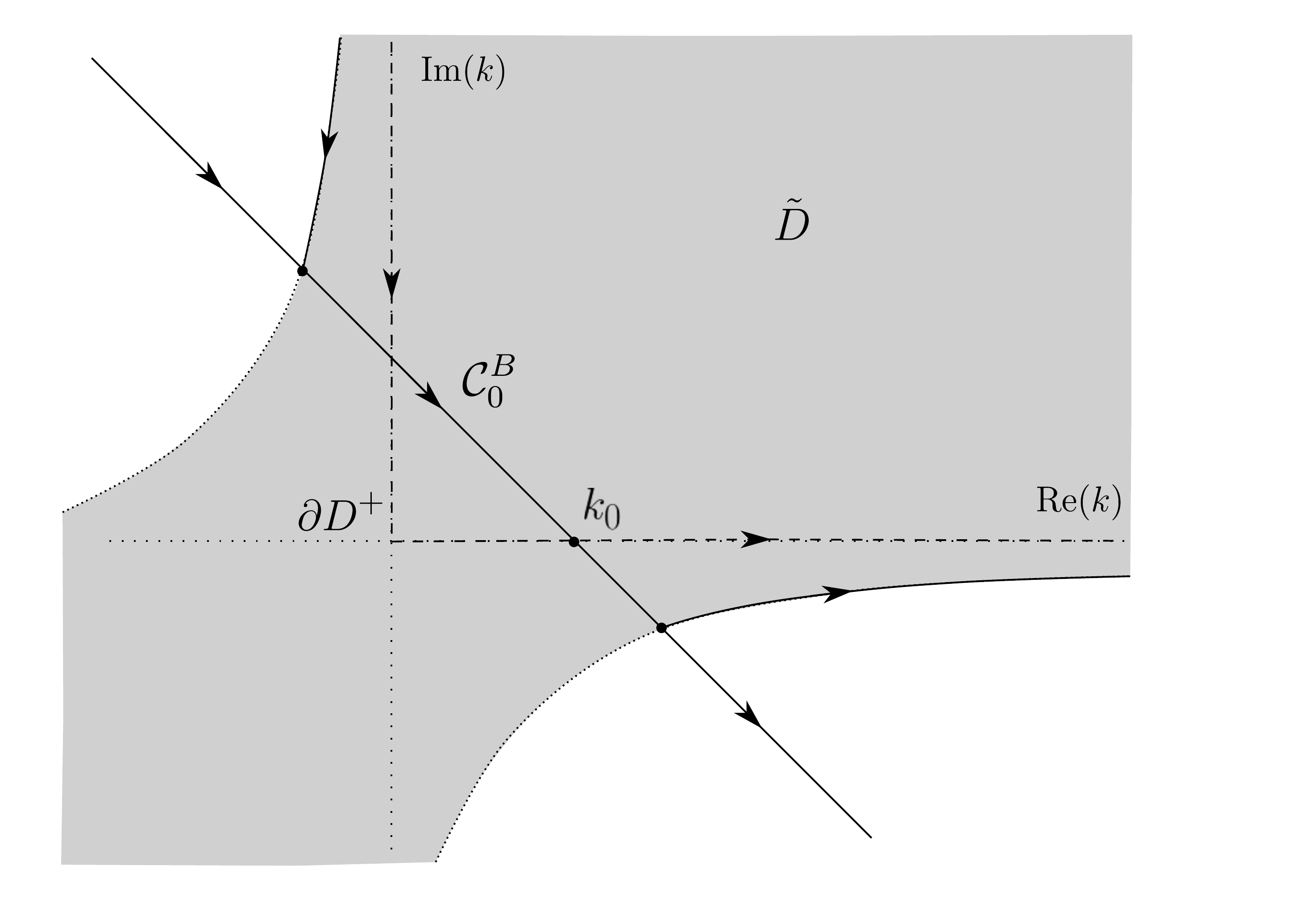}
    \caption{}
    \end{subfigure}

  }
  \caption{Regions for the LS equation. Panel (a) shows the region $D^+=\{k\in \mathbb{C}^+:\re{k^2}<0 \}$. Panel (b) shows  $\tilde{D}=\{k\in \mathbb{C}:\re{k^2}<\gamma \} $ and a schematic of the deformed contour from $\partial D^+$ to $\mathcal{C}_0^{B}$ for $B_0$ in (\ref{lssoleq}), see Section \ref{sec_lsi0} for details of the deformation. }
  \label{lscontour}
\end{figure}
\subsection{Deformations of the contours based on the method of steepest descent}
We separate the different integrals in the solution formula (\ref{solls}),
\begin{align}
q(x,t)=I_0+I_1+B_0,
\label{lssoleq}
\end{align}
where
\begin{align*}
I_0&=\frac{1}{2\pi}\int_{-\infty}^{\infty} e^{ikx-\omega(k)t}\hat{q}_0(k)dk,\\
I_1&=-\frac{1}{2\pi}\int_{\partial D^+}e^{ikx-\omega(k)t}\hat{q}_0(-k)dk,\\
B_0&=\frac{1}{2\pi}\int_{\partial D^+}e^{ikx-\omega(k)t}2k\tilde{g}_0(\omega(k),t)dk.
\end{align*}

\subsubsection{$I_1$: integral with the transform of the initial data}
\label{sec_lsj1}
The phase function in $I_1$ is
\begin{align}
\theta(k;x,t)=ikx-\omega(k)t=ikx-ik^2 t.
\end{align}
There is one saddle point $k_0=x/2t$ on the positive real axis satisfying $\theta'(k_0;x,t)=0$. Near the saddle point~$k_0$,
\[
\theta(k;x,t)= ikx-ik^2 t = -it(k-x/2t)^2+ix^2/4t.
\]
The directions of steepest descent are $-\pi/4$ and $3\pi/4$. Similar to the case of the heat equation, the transformed initial data $\hat{q}_0(k)$ is bounded and analytic in $\im{k} < \delta$ when $q_0 \in C^{\infty}_{\delta}$. Hence we choose the deformed contour $\mathcal{C}_1^I=\{a+k_0+i b : a \in (-\infty, - \delta), b=\delta\} \cup \{ a+k_0-i a  : a \in [-\delta, \infty)\}$ to be a horizontal ray with height Im$(k)=\delta$ and a straight-line segment with slope $-1$ passing through the saddle point as shown in Figure \ref{lsconp1}. The integral $I_1$ becomes
%\[
%\frac{1}{2\pi}\int_{-\infty}^{\infty} e^{ikx-\omega(k)t}\hat{q}_0(k)dk=\frac{1}{2\pi}\int_{i h-\infty}^{ih+k_1} e^{ikx-ik^2 t}\hat{q}_0(k)dk +\frac{1}{2\pi}\int_{ih+k_1}^{ih+k_1+e^{-\pi/4}\infty } e^{ikx-ik^2 t}\hat{q}_0(k)dk.
%\]
%Along the horizontal segment, let $k=a+ih$
\[
I_1=\frac{1}{2\pi}\int_{\mathcal{C}_1^I} e^{ikx-ik^2 t}\hat{q}_0(k)dk.
\]
\begin{figure}
  % Requires \usepackage{graphicx}
  \makebox[\textwidth][c]{
  \includegraphics[width=0.5\textwidth]{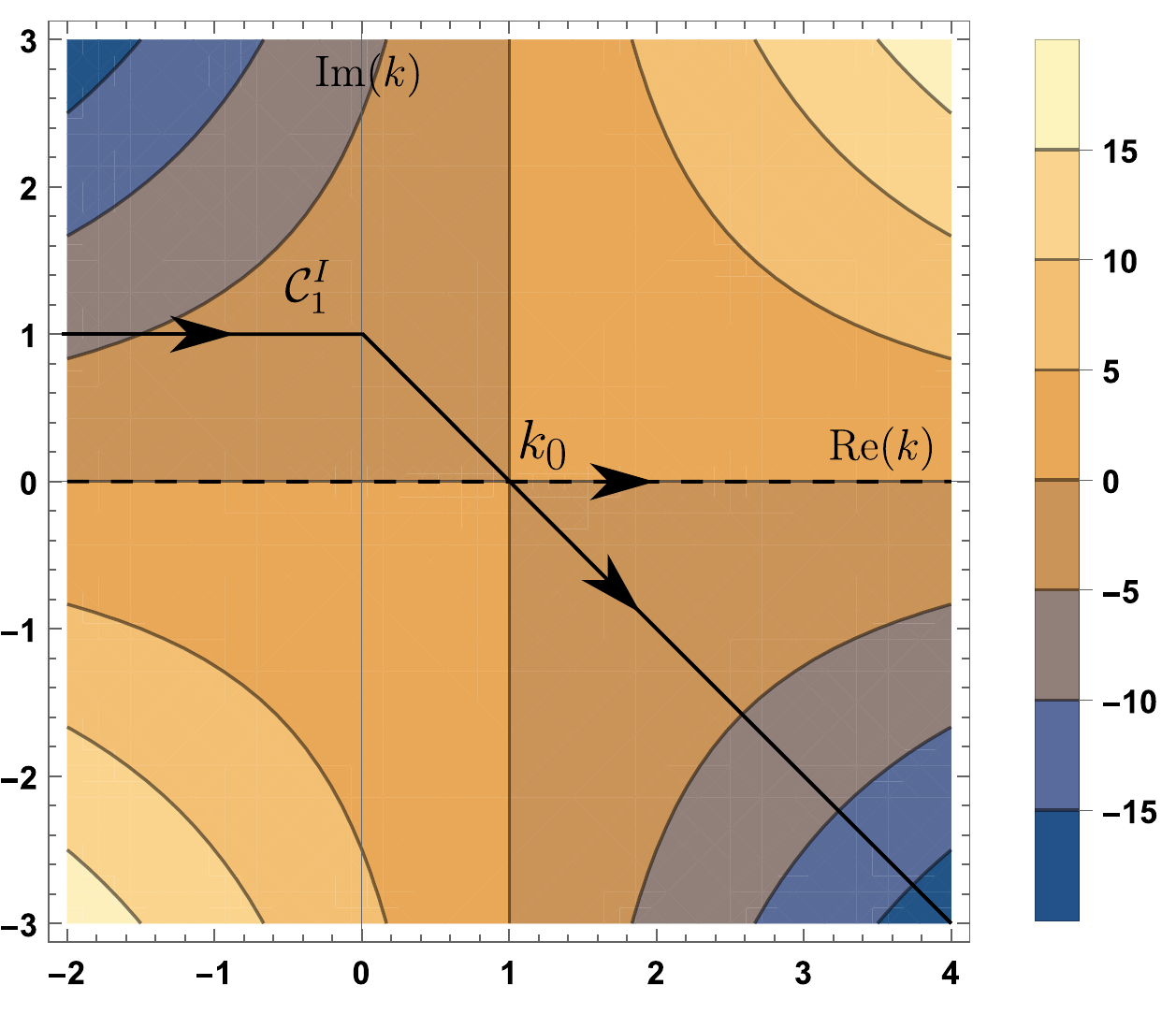}
  }
  \caption{The undeformed contour (dashed). The deformed horizontal contour $C_1^I$ (solid) going through $k_0=1$, $\delta=1$, $x=2,t=1$. The background contour  plot shows the level sets of $\mbox{Re}(\theta(k,x,t))$. The integrand of $I_1$ is analytic for $\im{k}<1$.}
  \label{lsconp1}
\end{figure}

\subsubsection{$I_2$: integral with the transform of the initial data $\hat{q}_0(-k)$}
\label{sec_lsj2}
Similar analysis can be applied to $I_2$ with $\hat{q}_0(-k)$ in (\ref{solls}). Since the transform $\hat{q}_0(-k)$ is analytic and bounded for $\im{k}>-\delta$, we can deform the contour $\partial D^+$ to  
\[
\mathcal{C}_2^I=\{a+k_0-i a : a \in (-\infty,  \delta)\} \cup \{ a+k_0-i b  : a \in [\delta, \infty), b=\delta\},\]
see Figure \ref{lsconp2}. Therefore, $I_2$ becomes 
\[
I_2=-\frac{1}{2\pi}\int_{\mathcal{C}_2^I } e^{ikx-ik^2 t}\hat{q}_0(-k)dk.
\]
\begin{figure}
  % Requires \usepackage{graphicx}
  \makebox[\textwidth][c]{
  \includegraphics[width=0.5\textwidth]{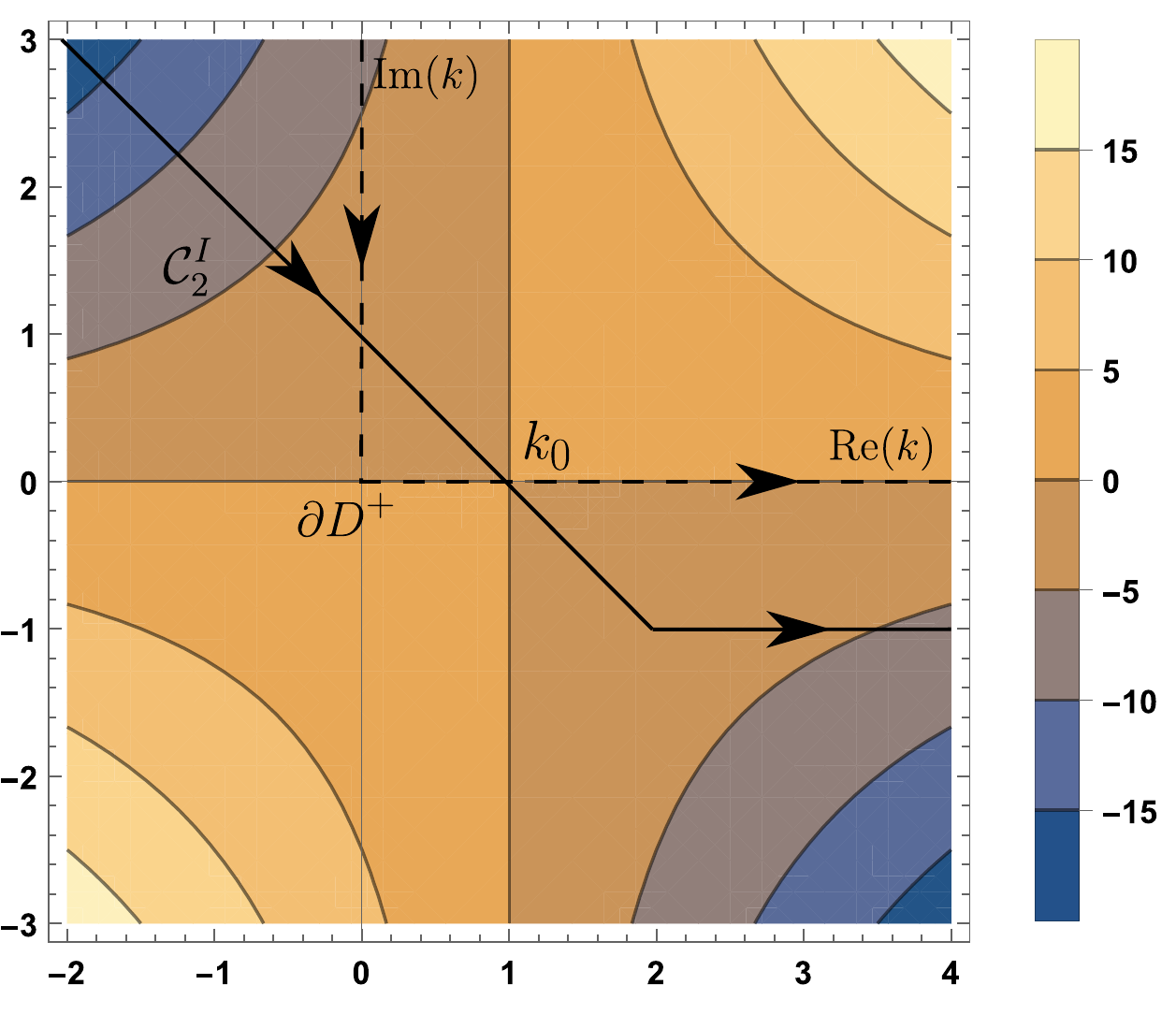}
  }
  \caption{The undeformed contour (dashed). The deformed contour for $I_2$ (solid) going through $k_0=1$, $\delta=1$, $x=2,t=1$. The background contour plot shows the level sets of $\mbox{Re}(\theta(k,x,t))$. The integrand of $I_2$ is analytic for $\im{k}>-1$. }
  \label{lsconp2}
\end{figure}

\subsubsection{$B_0$: integral of the transform of boundary data $\tilde{g}_0(\omega(k),t)$}
\label{sec_lsi0}
The issues discussed in Section \ref{sec_heati3} also appear in the case of the LS equation. However, now the region where we can deform the contour is restricted. The same decomposition as in (\ref{twoterm2}) gives 
\begin{align}
e^{ikx -k^2 t}\tilde{g}_0(\omega(k),t)=-e^{ikx}\int_{0}^{\infty}e^{k^2 s}g_0(s+t)ds + e^{ikx -k^2 t}\tilde{g}_0(\omega(k),\infty).
\label{twotermls}
\end{align}
For generic $g_0(t)$,  if the contour of $B_0$ is along the $-\pi/4$ direction at the saddle point $k_0=x/2t$, the first term in (\ref{twotermls}) grows exponentially as $x\rightarrow \infty$ since $\re{ikx}>0$. On the other hand, $\tilde{g}_0(\omega(k),\infty)$ may not be extendable outside $D^+$. With the assumption that $g_0\in C^{\infty}_{\gamma}$, it becomes possible to deform the path to the lower-half plane to obtain some exponential decay. The steps of the deformation are: 
\begin{enumerate}
\item The region $D^+$ is extended to $\tilde{D}$. The transformed data $\tilde{g}_0(\omega(k), \infty)$ is analytic in $\tilde{D}$, and continuous up to  $\partial \tilde{D}$.
\item The contour $\partial D^+$ is deformed to $\mathcal{C}_{0,a}^{B} \cup \mathcal{C}_{0,b}^{B} \cup \mathcal{C}_{0,c}^{B}$ as shown in Figure \ref{lsconp3} where $ \mathcal{C}_{0,b}^{B}$ is the straight-line segment passing through the saddle point along the steepest-descent direction up to $\partial \tilde{D}$ and $\mathcal{C}_{0,a}^{B}, \mathcal{C}_{0,c}^{B}$ are the unbounded curved segments along $\partial \tilde{D}$.
\item Using that $e^{ikx-\omega(k)t} \int_t^{\infty}e^{\omega(k) s}g_0(s)ds$ is bounded and analytic in $\tilde{D}$, we can replace $\tilde{g}_0(\omega(k),t)$ with $\tilde{g}_0(\omega(k),\infty)$ using Jordan's lemma,
\begin{align}
B_0=\frac{1}{2\pi}\int_{\partial D^+}e^{ikx-\omega(k)t} 2k\tilde{g}_0(\omega(k),t) dk=\frac{1}{2\pi}\int_{\mathcal{C}_{0,a}^{B}\cup \mathcal{C}_{0,b}^{B}\cup \mathcal{C}_{0,c}^{B}}e^{ikx-\omega(k)t}2k\tilde{g}_0(\omega(k),\infty)dk. 
 \label{i0ginf}
 \end{align} 
% \item Deforming the integrals $\int_{ix/2t}^{0}+\int_{0}^{x/2t}$ to the direction of steepest descend: 
%\begin{align*}
 %I_3=&-\frac{1}{2\pi}\int_{ix/2t}^{i\infty}e^{ikx-\omega(k)t}\left[2k\tilde{g}_0(\omega(k),\infty)\right]dk\\
 %+&\frac{1}{2\pi}\int_{ix/2t}^{x/2t}e^{ikx-\omega(k)t}\left[2k\tilde{g}_0(\omega(k),\infty)\right]dk
 %+\frac{1}{2\pi}\int_{x/2t}^{\infty}e^{ikx-\omega(k)t}\left[2k\tilde{g}_0(\omega(k),\infty)\right]dk.
 %\end{align*}  
 %\item Decompose the integral along the imagary axis and deform to the direction of steepest descend:
%\begin{align*}
 %I_3=&-\frac{1}{2\pi}\int_{ix/2t}^{i\infty}e^{ikx-\omega(k)t}2k\left[\left(\tilde{g}_0(\omega(k),\infty)-\tilde{g}_0(\omega(k),t)\right)+\tilde{g}_0(\omega(k),t)\right]dk\\
% +&\frac{1}{2\pi}\int_{ix/2t}^{x/2t}e^{ikx-\omega(k)t}\left[2k\tilde{g}_0(\omega(k),\infty)\right]dk
 %+\frac{1}{2\pi}\int_{x/2t}^{\infty}e^{ikx-\omega(k)t}\left[2k\tilde{g}_0(\omega(k),\infty)\right]dk\\
 %=&-\frac{1}{2\pi}\int_{ix/2t}^{i\infty}e^{ikx-\omega(k)t}2k\left[\tilde{g}_0(\omega(k),\infty)-\tilde{g}_0(\omega(k),t)\right]dk-\frac{1}{2\pi}\int_{ix/2t}^{\infty e^{3i\pi/4}}e^{ikx-\omega(k)t}2k\tilde{g}_0(\omega(k),t)dk\\
 %+&\frac{1}{2\pi}\int_{ix/2t}^{x/2t}e^{ikx-\omega(k)t}\left[2k\tilde{g}_0(\omega(k),\infty)\right]dk
 %+\frac{1}{2\pi}\int_{x/2t}^{\infty}e^{ikx-\omega(k)t}\left[2k\tilde{g}_0(\omega(k),\infty)\right]dk\\
 %=:& I_{3a}+I_{3b}+I_{3c}+I_{3d}\\
 %\end{align*}.
\item The integral along $\mathcal{C}_{0,a}^{B}$ is decomposed into two parts to maximize decay along the steepest descent direction:
 \begin{align*}
\int_{\mathcal{C}_{0,a}^{B}}e^{ikx-\omega(k)t}2k\tilde{g}_0(\omega(k),\infty)dk =\int_{\mathcal{C}_{0,d}^{B}}e^{ikx-\omega(k)t}2k\tilde{g}_0(\omega(k),t)dk+\int_{\mathcal{C}_{0,a}^{B}}e^{ikx-\omega(k)t}2k\tilde{g}^c_0(\omega(k),t)dk,  
 \end{align*} 
 where 
\[
\tilde{g}^c_0(\omega(k),t)=\int_t^{\infty} e^{\omega{k}s} g_0(s)ds ,
\]
is the complementary transform of $g_0$.
\item The integral along $\mathcal{C}_{0,c}^{B}$ is decomposed into two parts: 
 \begin{align*}
\int_{\mathcal{C}_{0,c}^{B}}e^{ikx-\omega(k)t}2k\tilde{g}_0(\omega(k),\infty)dk=& 
\int_{\mathcal{C}_{0,c}^{B}}e^{ikx-\omega(k)t}2\left(k\tilde{g}_0(\omega(k),\infty)-k_0\tilde{g}_0(\omega(k_0),\infty)\right)dk\\
&+\int_{\mathcal{C}_{0,e}^{B}}e^{ikx-\omega(k)t}2k_0\tilde{g}_0(\omega(k_0),\infty)dk.
\end{align*} 
The second integral on the right-hand side is deformed to follow the direction of steepest descent. 
\item With the above steps, we obtain
\begin{align*}
 B_0=&\frac{1}{2\pi}\int_{\mathcal{C}_{0,a}^{B}}e^{ikx-\omega(k)t} 2k\tilde{g}^c_0(\omega(k),t) dk+\frac{1}{2\pi}\int_{\mathcal{C}_{0,d}^{B}}e^{ikx-\omega(k)t} 2k\tilde{g}_0(\omega(k),t dk\\
 &+\frac{1}{2\pi}\int_{\mathcal{C}_{0,b}^{B}}e^{ikx-\omega(k)t} 2k\tilde{g}_0(\omega(k),\infty) dk \\
 &+\frac{1}{2\pi}\int_{\mathcal{C}_{0,c}^{B}}e^{ikx-\omega(k)t}\left[2k\tilde{g}_0(\omega(k),\infty)-2k_0\tilde{g}_0(\omega(k_0),\infty)\right]dk\\
 &+\frac{1}{2\pi}\int_{\mathcal{C}_{0,e}^{B}}e^{ikx-\omega(k)t} 2k_0\tilde{g}_0(\omega(k_0),\infty) dk.
\end{align*} 
\end{enumerate}
Using the deformed contour, for large $x,t$ , the integral is exponentially localized near the saddle point on $\mathcal{C}_{0,b}^{B}$. When the integrand is not sufficiently small near the endpoints of $\mathcal{C}_{0,b}^{B}$, the oscillations in the integrand along  $\mathcal{C}_{0,c}^{B}$ and $\mathcal{C}_{0,d}^{B}$ become important. Most of the potential error comes from the integral along $\mathcal{C}_{0,c}^{B}$ as the integrand along $\mathcal{C}_{0,d}^{B}$ has exponential decay from the $e^{ikx}$ factor. The contour $\mathcal{C}_{0,c}^{B}$ asymptotically approaches the real axis. We use the Levin collocation method \cite{iserles2006} for the integrals along $\mathcal{C}_{0,c}^{B}$ and $\mathcal{C}_{0,d}^{B}$ to maintain accuracy for large $x,t$. The rest of the integrals in $B_0$, as well as those making up $I_1$ and $I_2$, are computed using Clenshaw-Curtis quadrature. 
\subsection{A numerical example}
Consider the initial condition $q_0(x)=0$, and the Dirichlet boundary condition $g_0(t)=te^{-t}$. The real part of the solution to (\ref{lseq}) with this choice of data is shown in Figure \ref{lssol}. Dispersive waves quickly emerge from the boundary, becoming more oscillatory for large $x$. The absolute error and the magnitude of the solution evaluated along (a) $t=0.1$, (b) $x=0.1$, (c) $t=x^2$ are shown in Figure \ref{lserr}. The errors shown in dotted curves are computed with $N=20$ collocation points for each part of the contour in $B_0$ while the errors shown in solid curves are computed with $N=40$ collocation points. The absolute errors decrease as $x,t$ increase.
% and good relative errors are maintained as long as the computations are away from machine precision $\approx 10^{-16}$
In Figure \ref{lserr}(a), we see that although the initial condition is zero, at $t=0.1$ the solution $q(x,t)$ only decreases algebraically. This makes traditional time-stepping method inefficient even if we ignore issues related to the highly oscillatory nature of the solution.    
\begin{figure}
  % Requires \usepackage{graphicx}
  \makebox[\textwidth][c]{
  \includegraphics[width=0.5\textwidth]{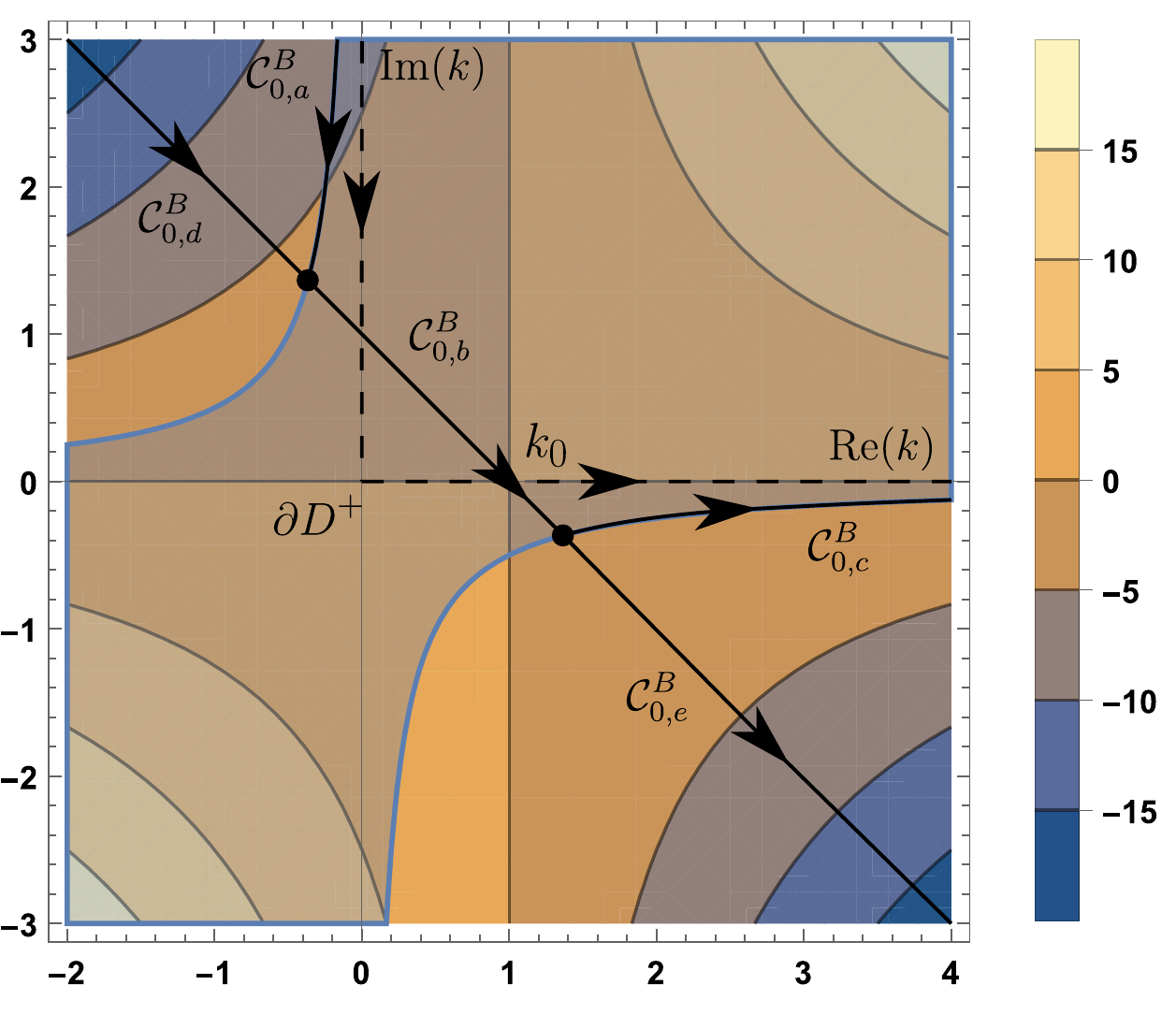}
  }
  \caption{The undeformed contour (dashed). The deformed contour for $B_0$ (solid) through $k_0=1$ with $x=2,t=1$,$\gamma=1$, see Section \ref{sec_lsi0} for details of the deformation. The background contour plot shows the level sets of $\mbox{Re}(\theta(k,x,t))$. }
  %The background contour plot is $\mbox{Re}(\theta(k,x,t))$. The integrand of $B_0$ is analytic in the shaded area after using (\ref{i0ginf}). }
  \label{lsconp3}
\end{figure}

%\begin{figure}
%  % Requires \usepackage{graphicx}
%  \makebox[\textwidth][c]{
%  \includegraphics[width=0.5\textwidth]{}
%  \includegraphics[width=0.5\textwidth]{}
%  }
%  \caption{Plots of solutions of ls with $q_0(x)=e^{-x}$,$g_0(t)=e^{-t}$,  Re left,Im right,}
%\end{figure}
%
%\begin{figure}
%  % Requires \usepackage{graphicx}
%  \makebox[\textwidth][c]{
%  \includegraphics[width=0.33\textwidth]{}
%  \includegraphics[width=0.33\textwidth]{}
%  \includegraphics[width=0.33\textwidth]{}
%  }
%  \caption{Plots of the absolute error at x=1 (left),t=1 (mid), and x=t (right).}
%\end{figure}
%
%\begin{figure}
%  % Requires \usepackage{graphicx}
%  \makebox[\textwidth][c]{
%  \includegraphics[width=0.5\textwidth]{}
%  \includegraphics[width=0.5\textwidth]{}
%  }
%  \caption{Contour plots of the abs error (left), rel error (right)}
%\end{figure}

\begin{figure}
  % Requires \usepackage{graphicx}
  \makebox[\textwidth][c]{
  \includegraphics[width=0.8\textwidth]{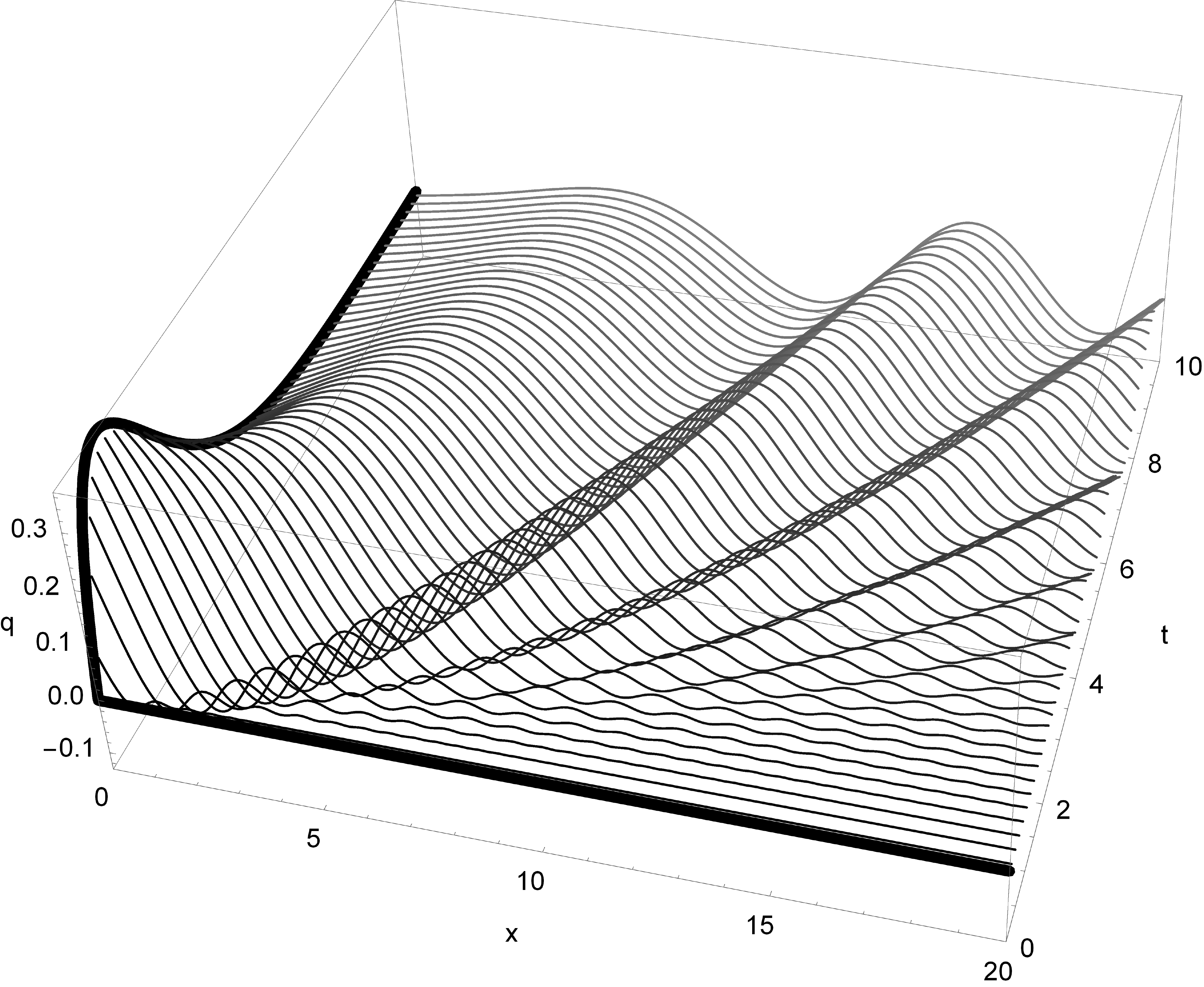}
   }
  \caption{The plot of the real part of the solution of the LS equation with $q_0(x)=0$, $g_0(t)=te^{-t}$. The bold solid curve shows the initial and boundary conditions.}
  \label{lssol}
\end{figure}

\begin{figure}
  % Requires \usepackage{graphicx}
  \makebox[\textwidth][c]{
 
 \begin{subfigure}{0.3\textwidth} 
  \includegraphics[width=1\textwidth]{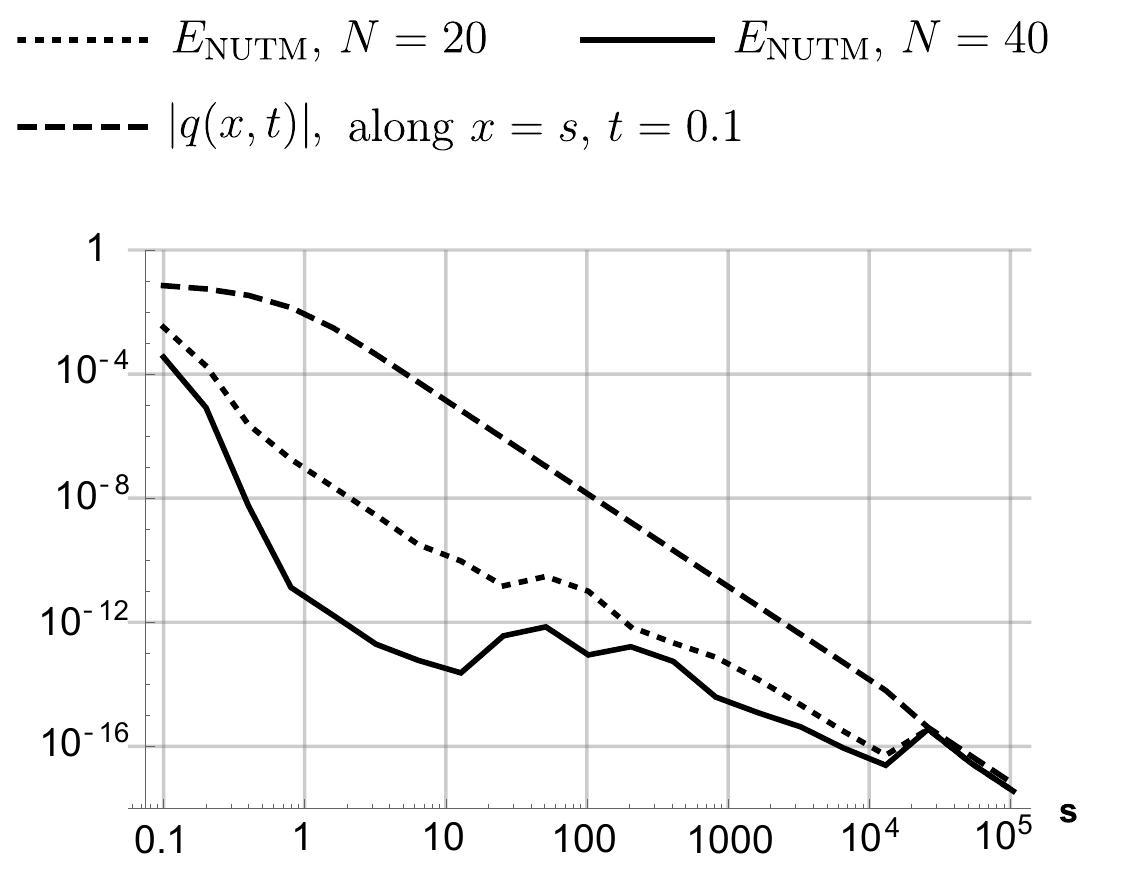}
      \caption{}
    \end{subfigure}

 \begin{subfigure}{0.3\textwidth} 
  \includegraphics[width=1\textwidth]{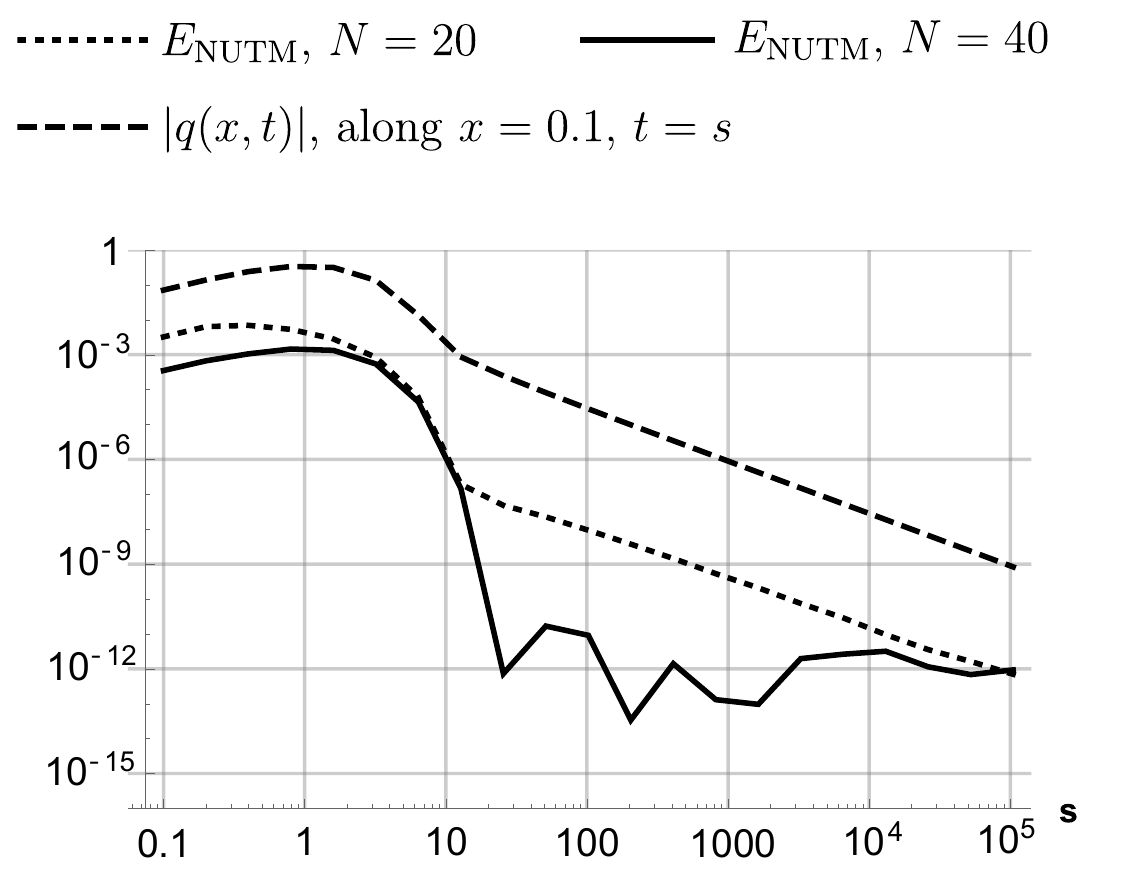}
      \caption{}
    \end{subfigure}

 \begin{subfigure}{0.3\textwidth} 
  \includegraphics[width=1\textwidth]{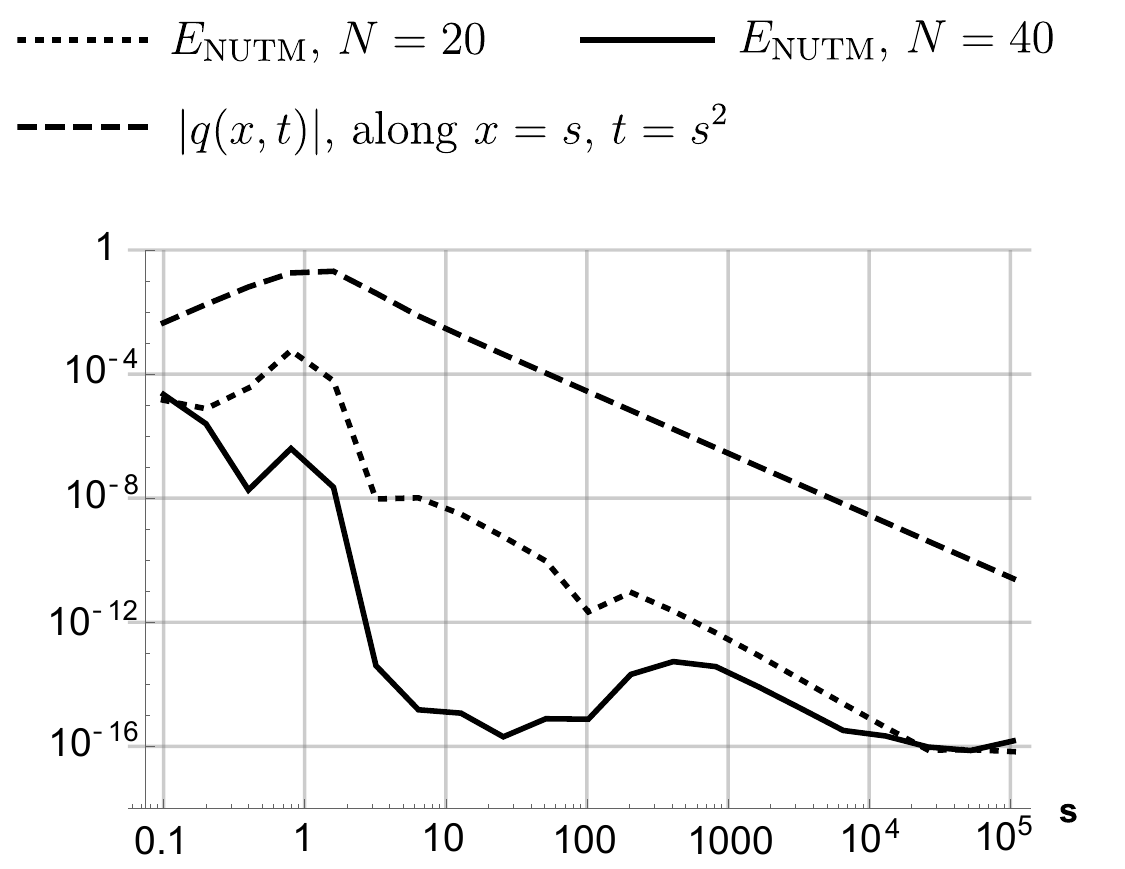}
     \caption{}
    \end{subfigure}

  }
  \caption{The absolute errors $E_{\text{NUTM}}$ of the numerical solution to the LS equation (\ref{lseq}) along three curves: (a) $x=s,t=0.1$, (b) $x=0.1, t=s$, (c) $x=s,t=s^2$  for $s\in [0.1,10^5]$.}
\label{lserr}
\end{figure}

\section{A multi-term third-order PDE}
\label{ch_lkdv}
The deformations for higher-order equations are more involved and the integrands may have branch points that are fixed by the equation and not by the initial or boundary data. The NUTM is implemented in a systematic way as long as one can solve the PDE using the UTM with additional care for the branch points. Consider a multi-term third-order PDE, 
\begin{align}
q_t=q_x+q_{xxx},~~~~x>0,~t>0,
\label{lkdveq}
\end{align}
with Dirichlet boundary data $g_0 \in C^{\infty}_\gamma$, Neumann boundary data $g_1 \in C^{\infty}_\gamma$ and initial data $q_0 \in C^{\infty}_\delta$. 
The dispersion relation is $\omega(k)=-ik+ik^3$ and $D^+=\{k\in \mathbb C^+:\re{\omega(k)}<0\}=D^+_1\cup D^+_2$ as shown in Figure~\ref{lkdvregion}. 
\begin{figure}
  % Requires \usepackage{graphicx}
  \makebox[\textwidth][c]{
  \includegraphics[width=0.8\textwidth]{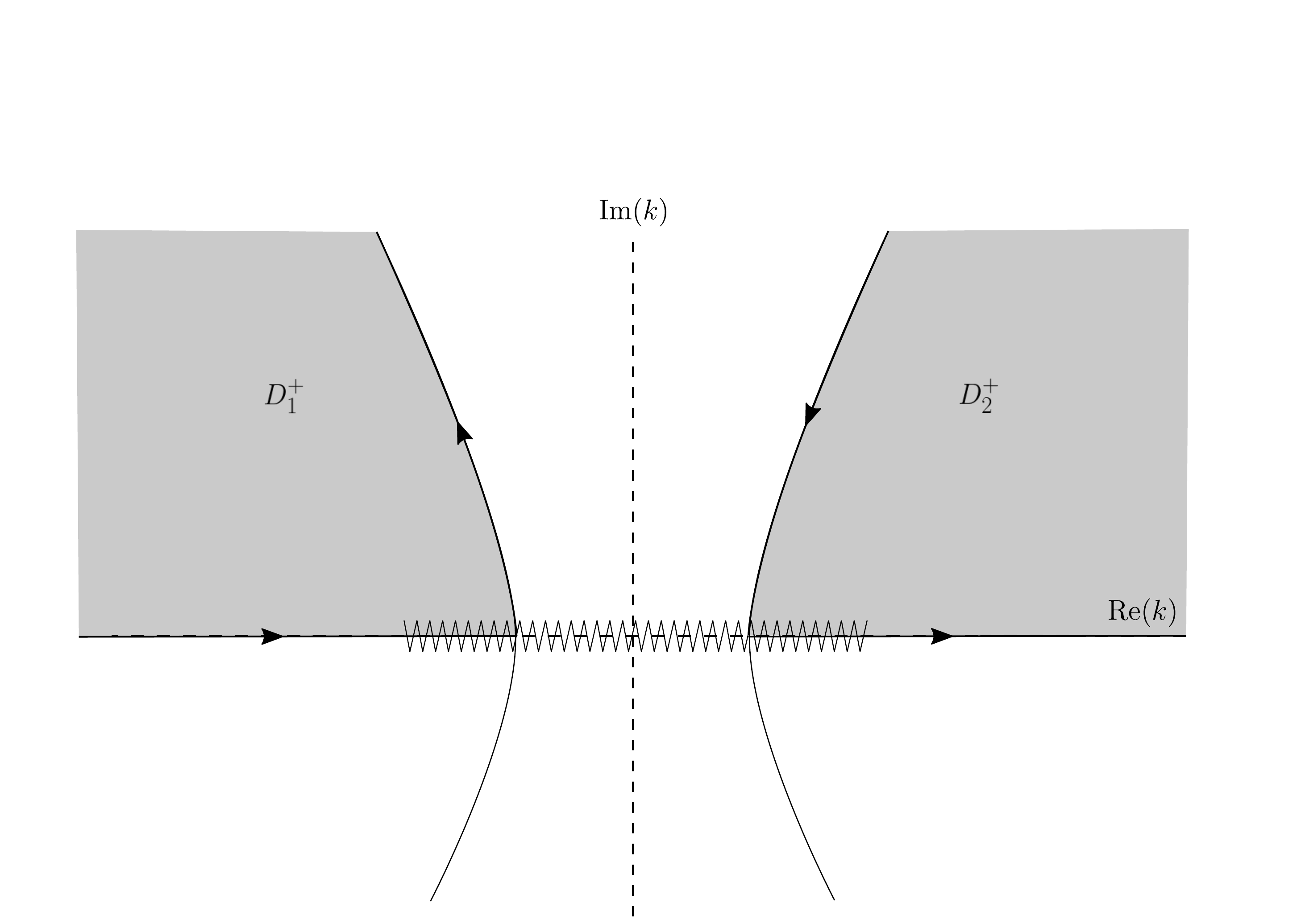}
  }
  \caption{The region $D^+$ for (\ref{lkdveq}). The shaded region in the top right is $D^+_1$. The shaded region in the top left is $D^+_2$. The branch cut is shown as a jagged line.}
  \label{lkdvregion}
\end{figure}
\begin{figure}
  % Requires \usepackage{graphicx}
  \makebox[\textwidth][c]{
  \includegraphics[width=0.8\textwidth]{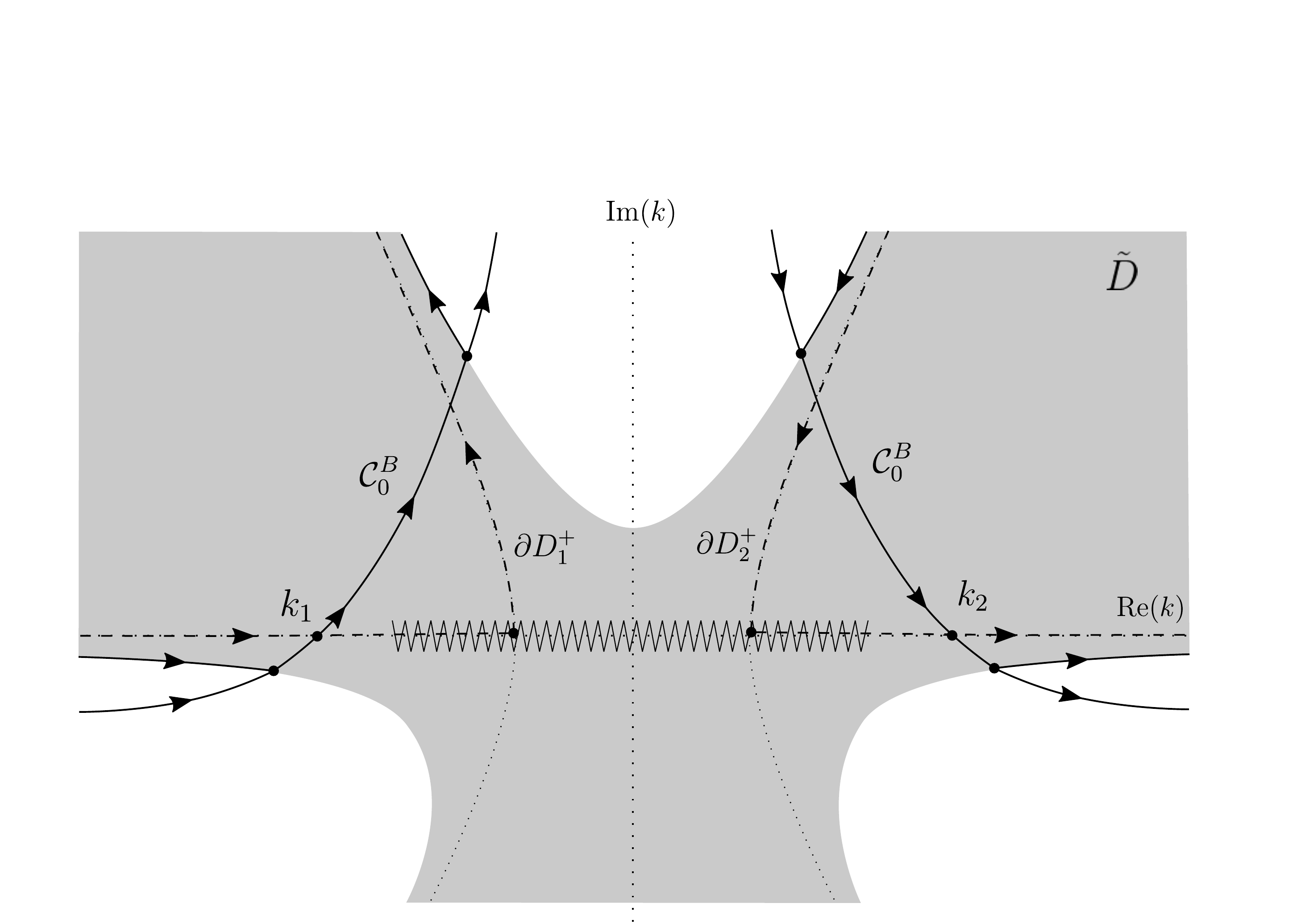}
  }
  \caption{The region $\tilde{D}$ and the deformation for $B_0$ across the saddle points $k_1$ and $k_2$. The branch cut is shown as a jagged line.}
  \label{lkdvregion2}
\end{figure}

Using the UTM, it is known that the problem requires two boundary conditions at $x=0$ \cite{deconinck2014}. %The transform of the initial and boundary conditions are defined by 
%\begin{align}
%\hat{q}_0(k) =& \int_0^{\infty}e^{-ikx}q_0(x)dx,\\
%\tilde{g}_0(\omega(k),T)=&\int_0^T e^{\omega(k) s}g_0(s)ds,\\
%\tilde{g}_1(\omega(k),T)=&\int_0^T e^{\omega(k) s}g_1(s)ds.
%\end{align}
By solving $\omega(\nu(k))=\omega(k)$,  we find two symmetries of the dispersion relation,
\begin{align}
\nu_1(k) =  (-k - \sqrt{4 - 3 k^2})/2,\\
\nu_2(k) =  (-k + \sqrt{4 - 3 k^2})/2,
\end{align}
with branch cut $[-2/  \sqrt{3}, 2/\sqrt{3}]$. Here, $\nu_1$ is the branch of $\nu$ that tends to $(-1/2+i\sqrt{3}/2)k=k\exp(2\pi i /3)$ as $k\rightarrow \infty$ and $\nu_2$ is the other branch. The solution formula is given by\footnote{Although some parts of the contours lie on branch cut, the integrands are well-defined if the values are taken as limits from the interior of  $D^+$.}
\begin{align}
q(x,t)=I_1+I_2+I_3+B_0+B_1,
\label{sollkdv}
\end{align}
with
\begin{align}
I_1&=\frac{1}{2\pi} \int_{-\infty}^{\infty} e^{ikx-\omega(k)t} \hat{q}_0(k)dk,\\
I_2&=-\frac{1}{2\pi} \int_{\partial D_1^+}e^{ikx-\omega(k)t} \hat{q}_0(\nu_1(k))  dk,\\
I_3&=-\frac{1}{2\pi} \int_{\partial D_2^+}e^{ikx-\omega(k)t} \hat{q}_0(\nu_2(k))  dk,\\
B_0&=-\frac{1}{2\pi}  \int_{\partial D_1^+} e^{ikx-\omega(k)t} (\nu^2_1(k)-k^2)\tilde{g}_0(\omega(k),t)dk -\frac{1}{2\pi}  \int_{\partial D_2^+} e^{ikx-\omega(k)t}  (\nu^2_2(k)-k^2)\tilde{g}_0(\omega(k),t)dk, \\
B_1&=-\frac{1}{2\pi}  \int_{\partial D_1^+} e^{ikx-\omega(k)t}(ik-i\nu_1(k))\tilde{g}_1(\omega(k),t) dk -\frac{1}{2\pi}  \int_{\partial D_2^+} e^{ikx-\omega(k)t}(ik-i\nu_2(k))\tilde{g}_1(\omega(k),t) dk.
\end{align}
For convenience, we impose the following initial and boundary conditions to focus on the deformation of $B_0$,  
\[
q(x,0)=0,\,\, q(0,t)=g_0(t),\,\, g_0 \in C_{\gamma}^\infty,\,\, q_x(0,t)=0.
\]  
For inhomogeneous initial and Neumann data, the deformation of $B_1$ follows the same steps as the deformation of $B_0$ and the deformations of $I_1,I_2,I_3$ follow the same steps as in $I_1,I_2$ in the heat equation or the LS equation case.  
\subsection{Deformations of the contour of $B_0$ based on the method of steepest descent}
With homogeneous initial and Neumann boundary conditions, the solution reduces to
\begin{align}
q(x,t)=B_0=B_0|_{D_1^+}+B_0|_{D_2^+},
\end{align}
where 
\begin{align}
B_0|_{D_1^+}&= -\frac{1}{2\pi}  \int_{\partial D_1^+} e^{ikx-\omega(k)t} (\nu_1^2(k)-k^2)\tilde{g}_0(\omega(k),t)dk,\\
B_0|_{D_2^+}&= -\frac{1}{2\pi}  \int_{\partial D_2^+} e^{ikx-\omega(k)t}  (\nu^2_2(k)-k^2)\tilde{g}_0(\omega(k),t)dk.
\end{align}
The phase function in $B_0$ is
\begin{align}
\theta(k;x,t)=ikx-\omega(k)t=ikx-(-ik+ik^3)t.
\end{align}
There are two saddle points $k_{1,2}=\pm \sqrt{x/(3t)+1/3}$ on the real axis satisfying $\theta'(k;x,t)=0$, $k_1\in D_1^+$ and $k_2\in D_2^+$. Since the saddle points and contours are symmetric with respect to the imaginary axis, we only need to analyze the deformation for $D_2^+$ and use the mirror image about the imaginary axis for $D_1^+$. Near the saddle point $k_2$, $\theta$ has the expansion
\[
\theta(k;x,t)= \frac{2}{9}i  \left( t\sqrt{\frac{3(t + x)}{t}} +  x \sqrt{\frac{3(t + x)}{t} } \right) - 
 i  t \sqrt{\frac{3(t + x)}{t}} (k - k_2)^2 + \mathcal{O}(k-k_2)^3.
 \]
The direction of steepest descent is along the angles $-\pi/4$ and $3\pi/4$. The integrands need to be extended to the lower half plane similar to the steps in Section \ref{sec_lsi0}.

\subsubsection{Deformations of the contour of $B_0$ for $x\geq 3t$}
In this case the saddle points $k_1$,$k_2$ lie outside branch cut $[-2/\sqrt{3}, 2\sqrt{3}]$. We proceed as follows.
\begin{enumerate}
\item The region $D^+=\{k\in \mathbb{C}^+:\re{\omega(k)}<0\}$ is extended to $\tilde{D}=\{k\in \mathbb{C}: \re{\omega(k)}<\gamma\}$. The transformed data $\tilde{g}_0(\omega(k), \infty)$ is analytic in $\tilde{D}$, and continuous up to  $\partial \tilde{D}$.
\item The contour $\partial D^+$ is deformed to $\mathcal{C}_{0,a}^{B} \cup \mathcal{C}_{0,b}^{B} \cup \mathcal{C}_{0,c}^{B}$ as shown in Figure \ref{lkdvctp1}. $ \mathcal{C}_{0,b}^{B}$ is the curve passing through the saddle point up to $\partial \tilde{D}$, keeping $\im{\theta(k;x,t)}$ constant along the steepest-descent direction and $\mathcal{C}_{0,a}^{B}, \mathcal{C}_{0,c}^{B}$ are the unbounded curve segments along $\partial \tilde{D}$.
\item Using that $e^{ikx-\omega(k)t} \int_t^{\infty}e^{\omega(k) s}g_0(s)ds$ is bounded and analytic in $\tilde{D}$, we can replace $\tilde{g}_0(\omega(k),t)$ with $\tilde{g}_0(\omega(k),\infty)$,
\begin{align*}
 B_0|_{D_2^+}=&\frac{1}{2\pi}\int_{\partial D^+_2} e^{ikx-\omega(k)t}  (\nu^2_2(k)-k^2)\tilde{g}_0(\omega(k),t)dk \\
 =&\frac{1}{2\pi}\int_{\mathcal{C}_{0,a}^{B}\cup \mathcal{C}_{0,b}^{B}\cup \mathcal{C}_{0,c}^{B}} e^{ikx-\omega(k)t}  (\nu^2_2(k)-k^2)\tilde{g}_0(\omega(k),\infty) dk .
 \label{i0ginf}
 \end{align*} 
\item The integral along $\mathcal{C}_{0,a}^{B}$ is decomposed into two parts to maximize decay along the steepest-descent direction,
 \begin{align*}
\int_{\mathcal{C}_{0,a}^{B}}e^{ikx-\omega(k)t}  (\nu^2_2(k)-k^2)\tilde{g}_0(\omega(k),\infty) dk =&\\
\int_{\mathcal{C}_{0,a}^{B}}e^{ikx-\omega(k)t}  (\nu^2_2(k)-k^2)\tilde{g}_0^c(\omega(k),t) dk&+\int_{\mathcal{C}_{0,d}^{B}}e^{ikx-\omega(k)t} (\nu^2_2(k)-k^2)\tilde{g}_0(\omega(k),t)dk , 
 \end{align*} 
 where 
\[
\tilde{g}^c_0(\omega(k),t)=\int_t^{\infty} e^{\omega(k)s} g_0(s)ds ,
\]
is the complement of the transform $\tilde{g}_0(\omega(k),t)$ and $\mathcal{C}_{0,d}^{B}$ is extended from $\mathcal{C}_{0,b}^{B}$ keeping $\im{\theta(k;x,t)}$ constant along the steepest-descent direction.
\item The integral along $\mathcal{C}_{0,c}^{B}$ is decomposed into two parts to separate the leading-order contribution in the oscillatory integral,
 \begin{align*}
\int_{\mathcal{C}_{0,c}^{B}}e^{ikx-\omega(k)t} (\nu^2_2(k)-k^2)\tilde{g}_0(\omega(k),\infty) dk=\int_{\mathcal{C}_{0,e}^{I}}e^{ikx-\omega(k)t}  (\nu^2_2(k_2)-k_2^2)\tilde{g}_0(\omega(k_2),\infty)dk\\ 
+\int_{\mathcal{C}_{0,c}^{B}}e^{ikx-\omega(k)t}\left[ (\nu^2_2(k)-k^2)\tilde{g}_0(\omega(k),\infty)- (\nu^2_2(k_2)-k_2^2)\tilde{g}_0(\omega(k_2),\infty) \right]dk.
\end{align*} 
The contour $\mathcal{C}_{0,e}^{B}$ is extended from $\mathcal{C}_{0,b}^{B}$ keeping $\im{\theta(k;x,t)}$ constant along the steepest-descent direction.
\item Finally, we obtain
\begin{align*}
B_0|_{D_2^+}=&\frac{1}{2\pi}\int_{\mathcal{C}_{0,a}^{I}}e^{ikx-\omega(k)t} (\nu^2_2(k)-k^2)\tilde{g}^c_0(\omega(k),t) dk+\frac{1}{2\pi}\int_{\mathcal{C}_{0,d}^{B}}e^{ikx-\omega(k)t} (\nu^2_2(k)-k^2)\tilde{g}_0(\omega(k),t)dk\\
 &+\frac{1}{2\pi}\int_{\mathcal{C}_{0,b}^{B}}e^{ikx-\omega(k)t} (\nu^2_2(k)-k^2)\tilde{g}_0(\omega(k),\infty) dk \\
 &+\frac{1}{2\pi}\int_{\mathcal{C}_{0,c}^{B}}e^{ikx-\omega(k)t}(\nu^2_2(k)-k^2)\tilde{g}_0(\omega(k),\infty) - (\nu^2_2(k_2)-k_2^2)\tilde{g}_0(\omega(k_2),\infty)dk\\
 &+\frac{1}{2\pi}\int_{\mathcal{C}_{0,e}^{B}}e^{ikx-\omega(k)t}(\nu^2_2(k_2)-k_2^2)\tilde{g}_0(\omega(k_2),\infty)dk.
\end{align*}
The integrals along $\mathcal{C}_{0,b}^{B},\mathcal{C}_{0,d}^{B}$ and $\mathcal{C}_{0,e}^{B}$ are computed using Clenshaw-Curtis quadrature and the integrals along $\mathcal{C}_{0,a}^{B}$ and $\mathcal{C}_{0,c}^{B}$ are computed using Levin's method. 
\end{enumerate}
The contour integral $B_0|_{D_1^+}$ is deformed in a symmetrical way. For real-valued data, we can use the symmetry and compute $q(x,t)$ with only the contour integral $B_0|_{D_2^+}$,
\[
q(x,t)=2\re{B_0|_{D_2^+}}.
\]
\begin{figure}
  % Requires \usepackage{graphicx}
  \makebox[\textwidth][c]{
  \includegraphics[width=0.5\textwidth]{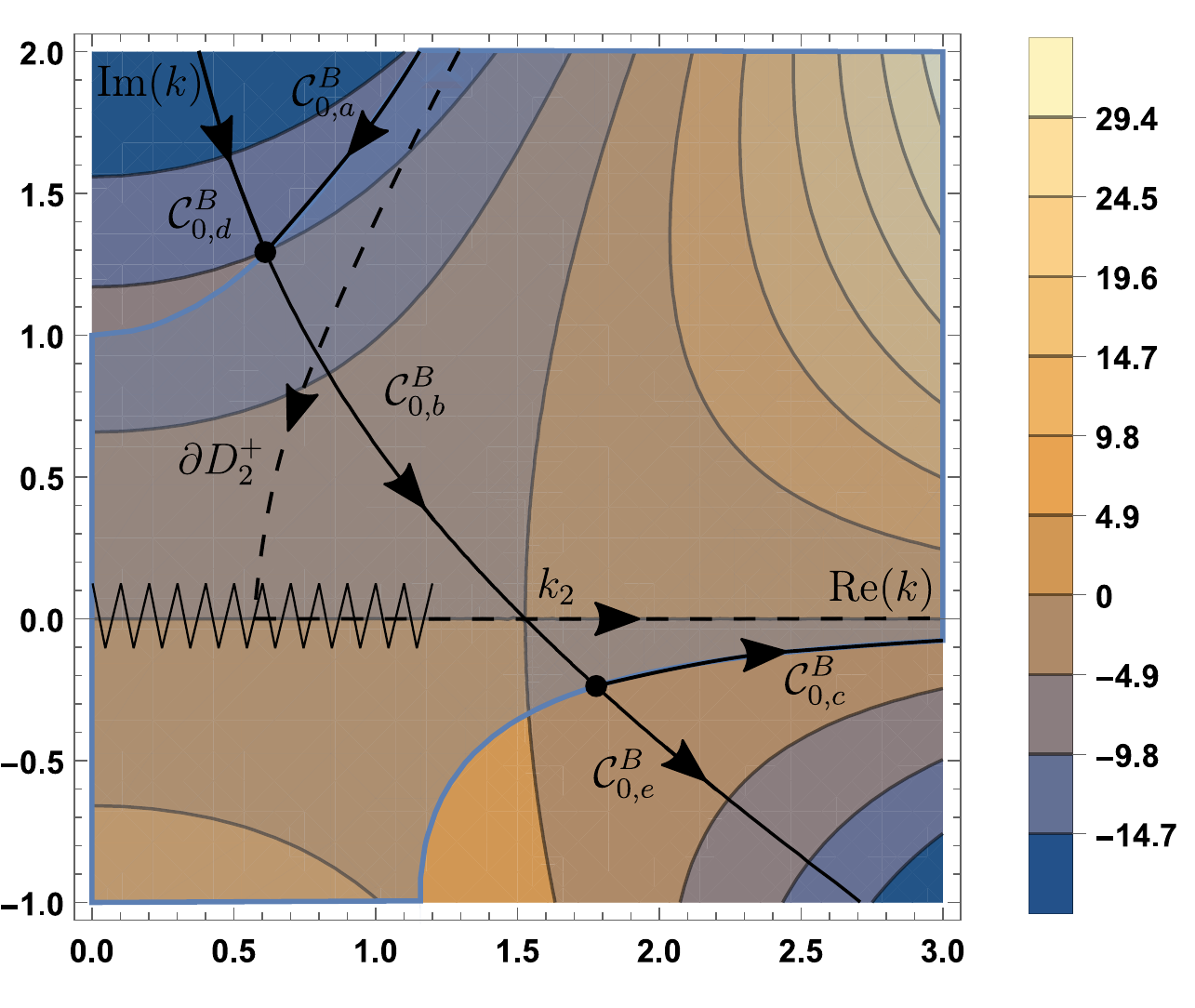}
  }
  \caption{The deformed contour for $B_0|_{D_2}$ when $x\geq 3t$ (solid), the undeformed contour (dashed). The background contour  plot shows the level sets of  $\mbox{Re}(\theta(k,x,t))$. The branch cut is shown as a jagged line.}
  \label{lkdvctp1}
\end{figure}

\subsubsection{Deformations of the contour for $B_0|_{D_2^+}$ for $x< 3t$}
When $x< 3t$, the saddle points $k_1$, $k_2$ lie on branch cut $[-2/\sqrt{3}, 2/\sqrt{3}]$. To maximize the use of the steepest-decent direction, we choose a different branch cut for $\nu$, shown in Figure \ref{lkdvctp2} in red. The new branch cut starts from the branch point $2/\sqrt{3}$ and goes along the curve with $\im{\theta(k;x,t)}$ constant in the lower half plane. The corresponding $\tilde{\nu}_2(k)$ is defined as the analytic continuation of $\nu_2(k)$ from the interior of $D_2^+$.  We use the following steps.
\begin{enumerate}
\item The region $D^+=\{k\in \mathbb{C}^+:\re{\omega(k)}<0\}$ is extended to $\tilde{D}=\{k\in \mathbb{C}: \re{\omega(k)}<\gamma\}$. The transformed data $\tilde{g}_0(\omega(k), \infty)$ is analytic in $\tilde{D}$, and continuous up to  $\partial \tilde{D}$.
\item The contour $\partial D_2^+$ is deformed to $\mathcal{C}_{0,a}^{B} \cup \mathcal{C}_{0,b}^{B} \cup \mathcal{C}_{0,e}^{B} \cup (-\mathcal{C}_{0,f}^{B}) \cup \mathcal{C}_{0,f}^{B}   \cup \mathcal{C}_{0,c}^{B}$ as shown in Figure \ref{lkdvctp2}. The contour $ \mathcal{C}_{0,b}^{B}$ is the curve passing through the saddle point up to $\partial \tilde{D}$ with $\im{\theta(k;x,t)}$ constant along the steepest-descent direction. The contours $\mathcal{C}_{0,a}^{B}, \mathcal{C}_{0,e}^{B}$ and $ \mathcal{C}_{0,c}^{B}$ are along $\partial \tilde{D}$. The contours $-\mathcal{C}_{0,f}^{B}$ and $\mathcal{C}_{0,f}^{B}$ are the two segments on the new branch cut with opposite orientations. The contour $-\mathcal{C}_{0,f}^{B}$ points towards the branch point and $\mathcal{C}_{0,f}^{B}$ points away from the branch point.
\item Using that $e^{ikx-\omega(k)t} \int_t^{\infty}e^{\omega(k) s}g_0(s)ds$ is bounded and analytic in $\tilde{D} $, we can replace $\tilde{g}_0(\omega(k),t)$ with $\tilde{g}_0(\omega(k),\infty)$,
\begin{align*}
B_0|_{D_2^+}=&\frac{1}{2\pi}\int_{\partial D^+_2} e^{ikx-\omega(k)t}  (\tilde{\nu}^2_2(k)-k^2)\tilde{g}_0(\omega(k),t)dk \\
 =&\frac{1}{2\pi}\int_{\mathcal{C}_{0,a}^{B}\cup \mathcal{C}_{0,b}^{B}\cup \mathcal{C}_{0,e}^{B}\cup \mathcal{C}_{0,c}^{B}} e^{ikx-\omega(k)t}  (\tilde{\nu}^2_2(k)-k^2)\tilde{g}_0(\omega(k),\infty) dk \\
  &-  \frac{1}{2\pi}\int_{\mathcal{C}_{0,f}^{B}} e^{ikx-\omega(k)t}  \tilde{\nu}^2_2(k^-)\tilde{g}_0(\omega(k),\infty) dk+  \frac{1}{2\pi}\int_{\mathcal{C}_{0,f}^{B}} e^{ikx-\omega(k)t}  \tilde{\nu}^2_2(k^+)\tilde{g}_0(\omega(k),\infty) dk, 
 \label{i0ginf}
\end{align*} 
where $k^+$ and $k^-$ denote the limit from the left/right of the curve respectively.
\item The integral along $\mathcal{C}_{0,a}^{B}$ is decomposed into two parts to maximize decay along the steepest-descent direction: \begin{align*}
\int_{\mathcal{C}_{0,a}^{B}}e^{ikx-\omega(k)t}  (\tilde{\nu}^2_2(k)-k^2)\tilde{g}_0(\omega(k),\infty) dk =&\\
\int_{\mathcal{C}_{0,a}^{B}}e^{ikx-\omega(k)t}  (\tilde{\nu}^2_2(k)-k^2)\tilde{g}_0^c(\omega(k),t) dk&+\int_{\mathcal{C}_{0,d}^{B}}e^{ikx-\omega(k)t} (\tilde{\nu}^2_2(k)-k^2)\tilde{g}_0(\omega(k),t)dk  ,
 \end{align*} 
\item We obtain
\begin{align*}
 B_0|_{D_2^+}=&\frac{1}{2\pi}\int_{\mathcal{C}_{0,a}^{B}}e^{ikx-\omega(k)t} (\tilde{\nu}^2_2(k)-k^2)\tilde{g}^c_0(\omega(k),t) dk+\frac{1}{2\pi}\int_{\mathcal{C}_{0,d}^{B}}e^{ikx-\omega(k)t} (\tilde{\nu}^2_2(k)-k^2)\tilde{g}_0(\omega(k),t)dk\\
 &+\frac{1}{2\pi}\int_{\mathcal{C}_{0,b}^{B}}e^{ikx-\omega(k)t} (\tilde{\nu}^2_2(k)-k^2)\tilde{g}_0(\omega(k),\infty) dk \\
& +\frac{1}{2\pi}\int_{\mathcal{C}_{0,e}^{B} \cup \mathcal{C}_{0,c}^{B}}e^{ikx-\omega(k)t}(\tilde{\nu}^2_2(k)-k^2)\tilde{g}_0(\omega(k),\infty) dk\\
&+  \frac{1}{2\pi}\int_{-\mathcal{C}_{0,f}^{B}} e^{ikx-\omega(k)t}  \tilde{\nu}^2_2(k^+)\tilde{g}_0(\omega(k),\infty) dk+  \frac{1}{2\pi}\int_{\mathcal{C}_{0,f}^{B}} e^{ikx-\omega(k)t}  \tilde{\nu}^2_2(k^+)\tilde{g}_0(\omega(k),\infty) dk.
\end{align*} 
The integrals along $\mathcal{C}_{0,b}^{B},\mathcal{C}_{0,d}^{B},-\mathcal{C}_{0,f}^{B}$ and $\mathcal{C}_{0,f}^{B}$ are computed using Clenshaw-Curtis quadrature and the integrals along $\mathcal{C}_{0,a}^{B},\mathcal{C}_{0,c}^{B}$ and $\mathcal{C}_{0,e}^{B}$ are computed using Levin's method. The contour integral $B_0|_{D_1^+}$ is deformed symmetrically.
\end{enumerate}
\begin{figure}
  % Requires \usepackage{graphicx}
  \begin{subfigure}[c]{.5\linewidth}
    \includegraphics[width=1.0\textwidth]{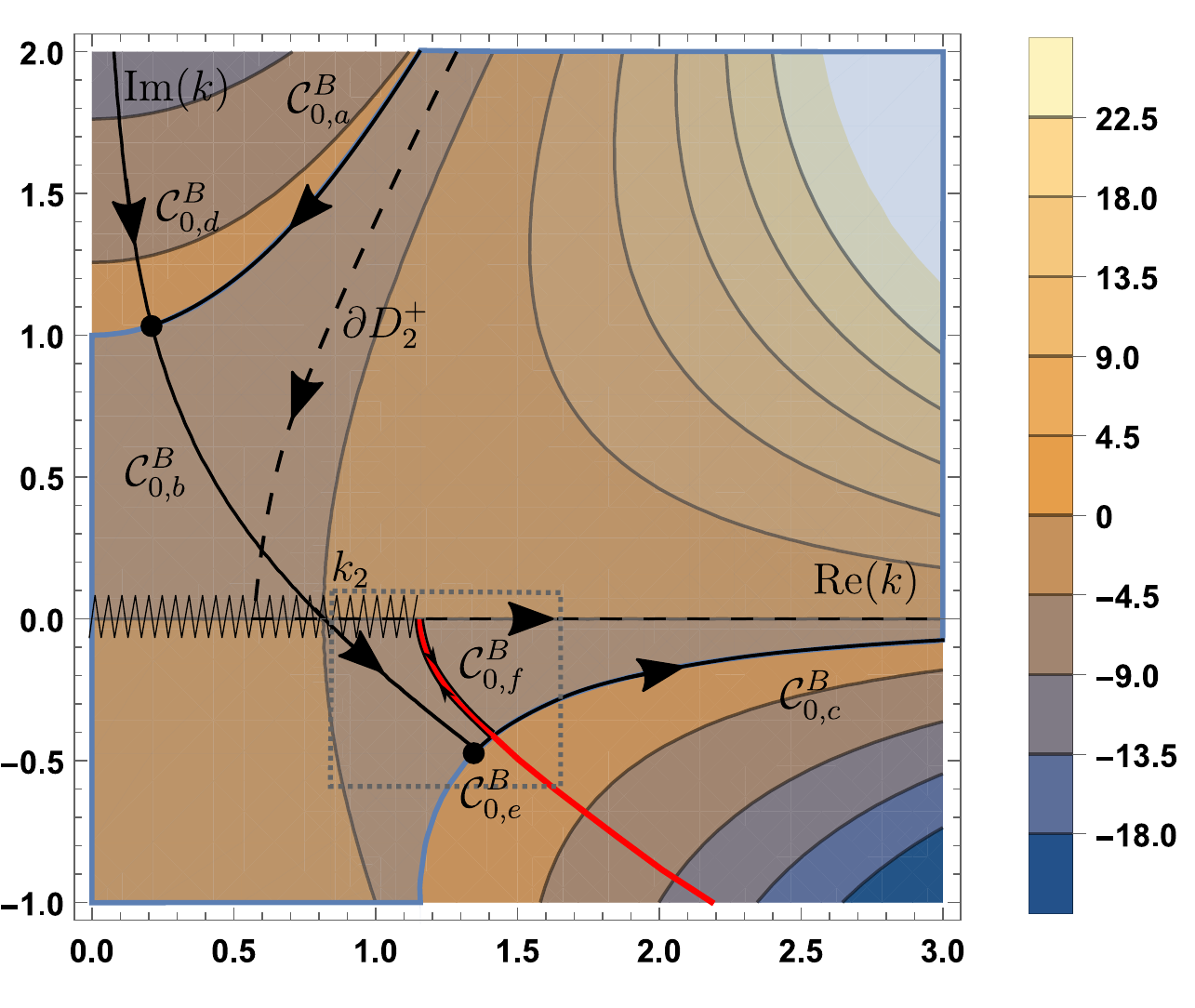}
  \end{subfigure}
  \begin{subfigure}[c]{.5\linewidth}
  \includegraphics[width=0.8\textwidth]{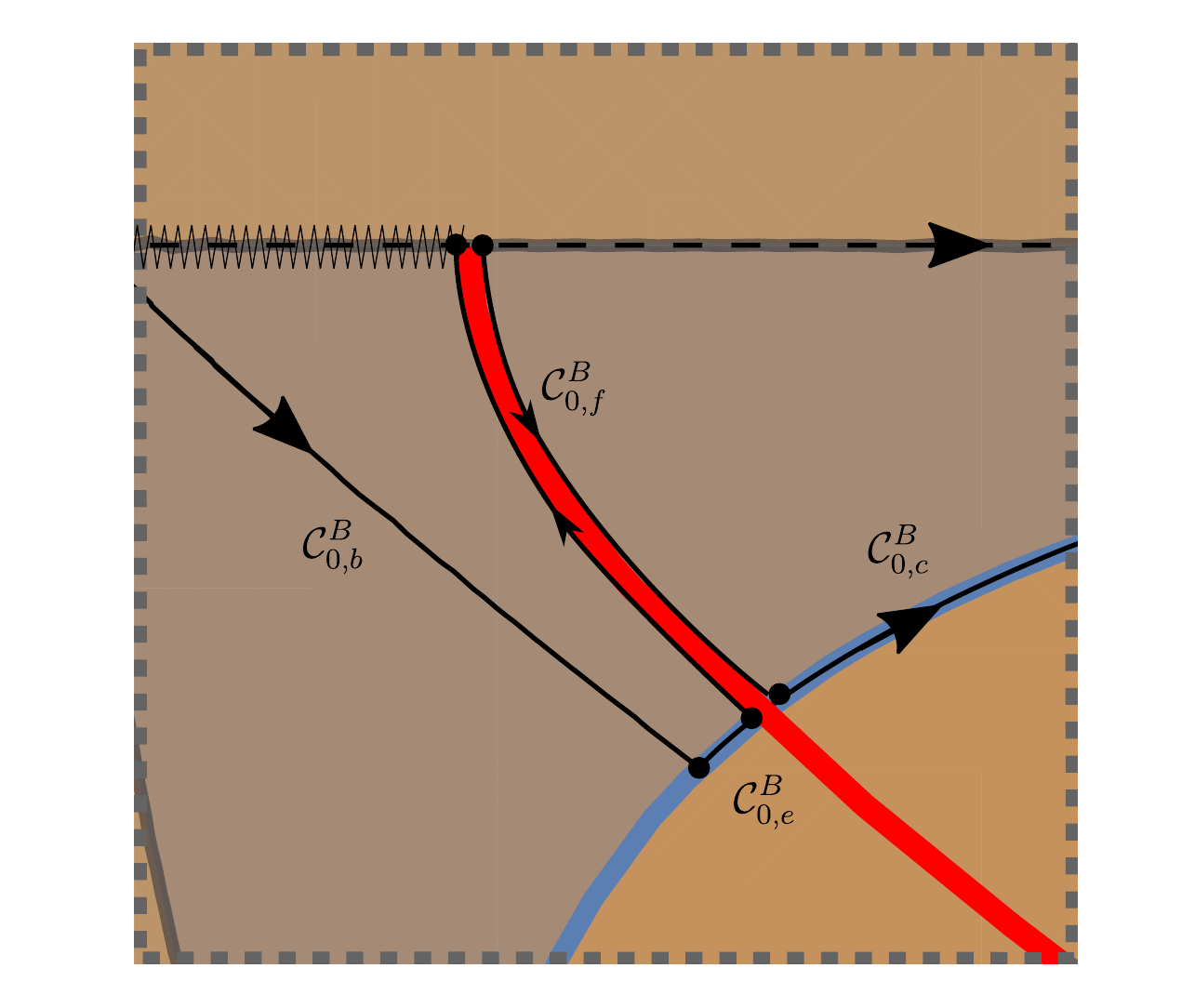}
  \end{subfigure}
  \caption{The deformed contour for $B_0|_{D_2}$ when $x< 3t$ (solid). The undeformed contour (dashed).  The background contour  plot shows the level sets of $\mbox{Re}(\theta(k,x,t))$. The original branch cut is shown as a jagged line and the new branch cut is shown in red. A zoomed plot of the contour near the new branch cut is shown in the right panel.}
  \label{lkdvctp2}
\end{figure}

\subsubsection{Improve the accuracy near the branch point}
\label{sec_cov}
Since $\tilde{\nu}_2(k)$ is not differentiable at the branch point $k_2=2/\sqrt{3}$, Clenshaw-Curtis quadrature loses spectral accuracy for the integrals along $-\mathcal{C}_{0,f}^{B}$, $\mathcal{C}_{0,f}^{B}$ and $\mathcal{C}_{0,b}^{B}$ in the critical case $x=3t$. With the change of variables $s^2=k-k_2$, we get
\[
\hat{\nu}_2(s):=\nu_2(s^2+k_2)= (-2 \sqrt{3} - 3 s^2 -  i 3^{5/4} s \sqrt{4 + \sqrt{3} s^2})/6.
\]
The new symmetry $\hat{\nu}_2(s)$ is smooth near $s=0$. Clenshaw-Curtis quadrature maintains spectral accuracy for the integrals on $\mathcal{C}_{0,b}^{B}$ and $\mathcal{C}_{0,f}^{B}$ after this change of variables.

\subsection{Numerical examples}
Consider the Dirichlet boundary condition $g_0(t)=te^{-t}$, the homogeneous initial condition $q_0(x)=0$ and the Neumann boundary condition $g_1(t)=0$. The solution to (\ref{lkdveq}) is shown in Figure \ref{lkdvsol}. For small time, the dispersive waves emanate from the boundary and the solution looks similar to Figure \ref{lssol}. As $t$ grows, the advection dominates and the waves turn back to the boundary. The absolute error and the magnitude of the solution evaluated along (a) $t=0.1$, (b) $x=0.1$, (c) $x=3t$ are shown in Figure \ref{lkdverr}. The errors shown in dotted curves are computed with $N=20$ collocation points for each part of the contour in $B_0$ while the errors shown in solid curves are computed with $N=40$ collocation points. The absolute errors tend to zero as $x,t$ increase. 
To demonstrate spectral accuracy, the absolute errors $E_{\text{NUTM}}$ evaluated at $x=1,3,5$, $t=1$ are plotted against the number of collocation points per segment in Figure \ref{lkdverr2}. With the change of variables used in Section \ref{sec_cov}, the NUTM remains spectrally accurate even when the branch point is on the contour of integration.

All our examples use boundary conditions with transforms that can be computed explicitly. This is to allow us to estimate the error of our method by comparing with the built-in integration routine in Mathematica. To show the NUTM is not limited to this, in Figure \ref{lkdvsol2}, we show a plot of the solution $q(x,t)$ with $g_0(t)=\sin(2t) \phi(t/(2\pi))$ where
\begin{align}
\phi(t)= \begin{cases} 
      \exp(-1/(1 - t^2)) & \abs{t} \leq 1, \\
      0 & \abs{t}>1.
   \end{cases}
   \label{molifier}
\end{align}
The initial data and the Neumann data are zero. We see a similar wave pattern as in Figure \ref{lkdvsol} with dispersive waves propagating in the positive $x$ direction, before turning back.

\begin{figure}
  % Requires \usepackage{graphicx}
  \makebox[\textwidth][c]{
  \includegraphics[width=0.9\textwidth]{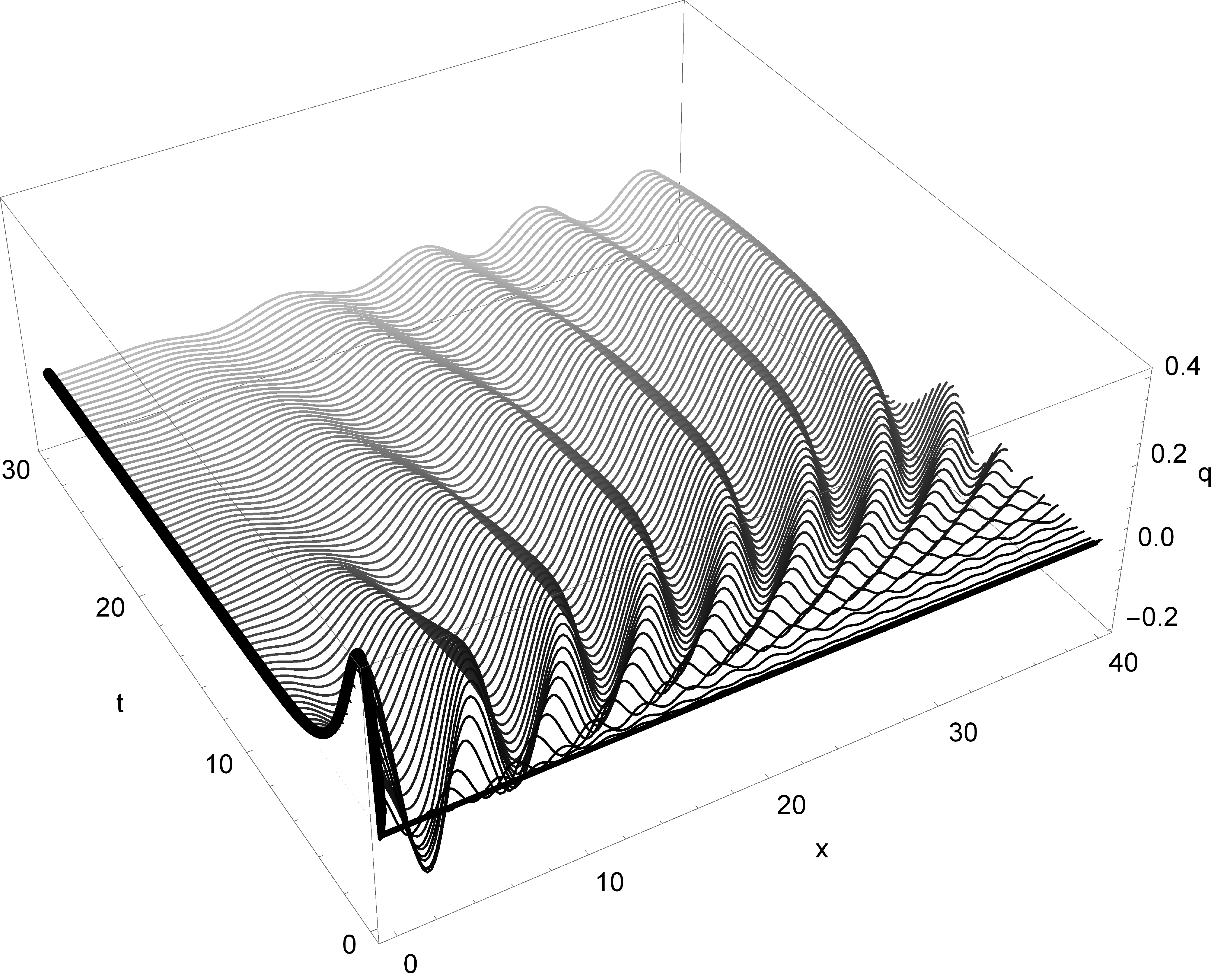}
  }
  \caption{The numerical solution of equation (\ref{lkdveq}) with $q_0(x)=0$, $g_0(t)=te^{-t}$, $g_1(t)=0$. The bold curves are the initial and Dirichlet boundary conditions. For small $t$, dispersive waves emanate from the boundary while the waves start to turn back following the advection as $t$ grows.}
  \label{lkdvsol}
\end{figure}

\begin{figure}
  % Requires \usepackage{graphicx}
  \makebox[\textwidth][c]{

 \begin{subfigure}{0.3\textwidth} 
  \includegraphics[width=\textwidth]{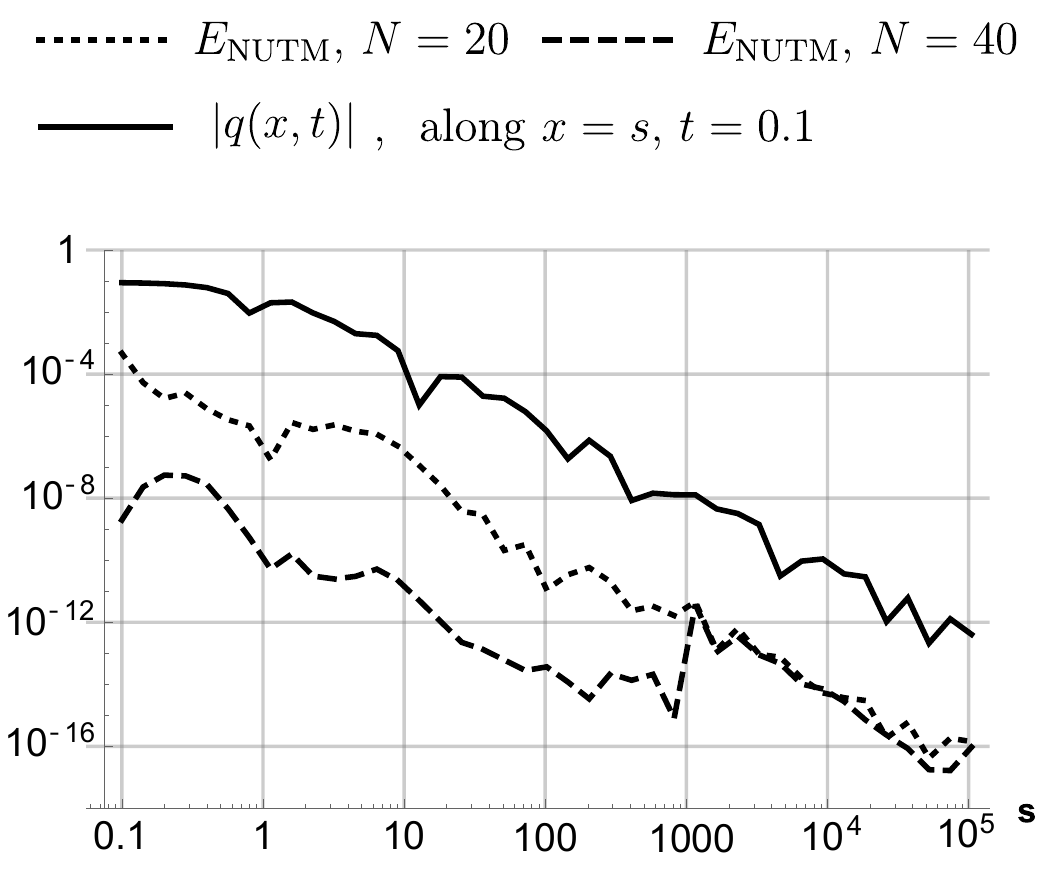}
\caption{}
\end{subfigure}

 \begin{subfigure}{0.3\textwidth} 
  \includegraphics[width=\textwidth]{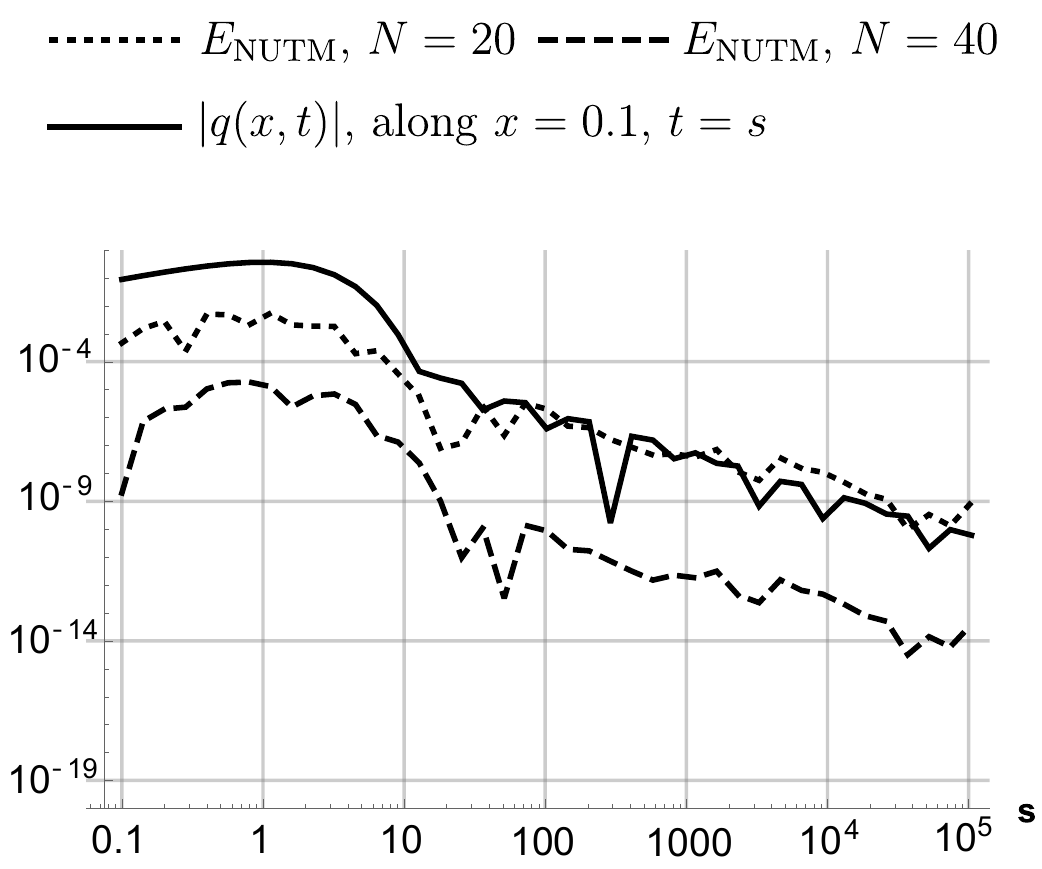}
\caption{}
\end{subfigure}

 \begin{subfigure}{0.3\textwidth} 
  \includegraphics[width=\textwidth]{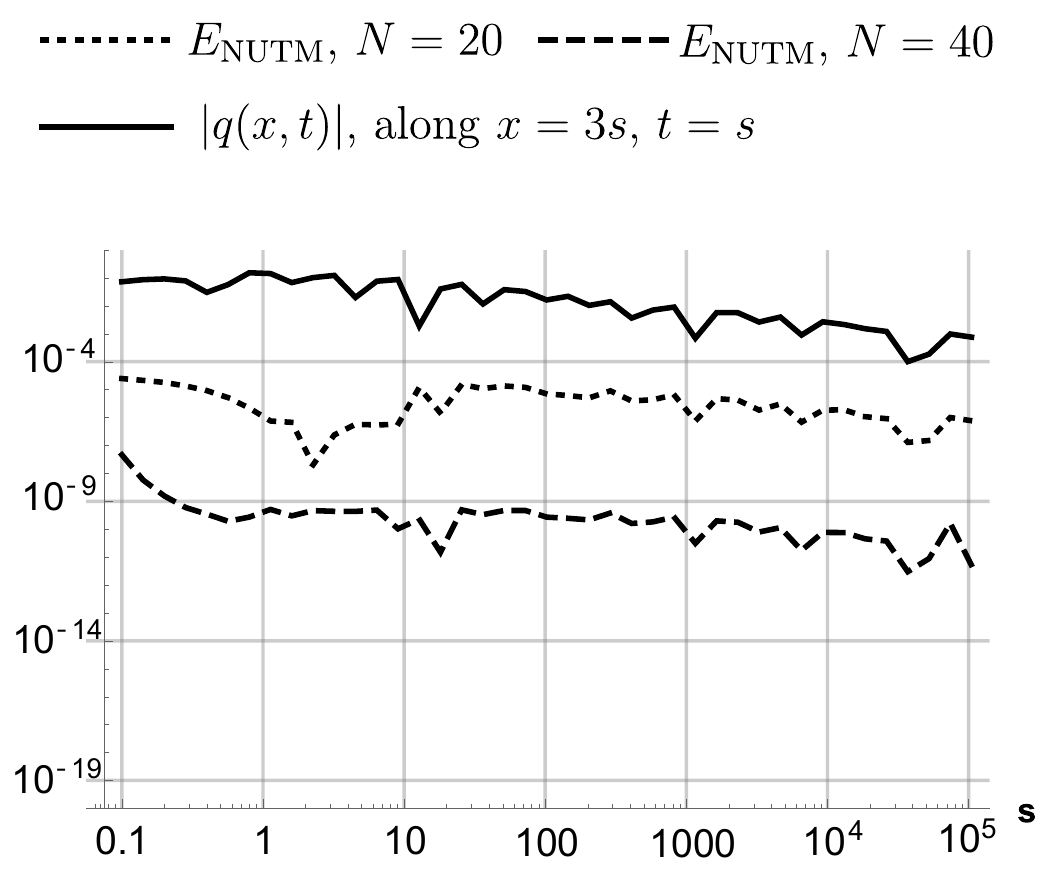}
\caption{}
\end{subfigure}

  }
  \caption{The absolute error $E_{\text{NUTM}}$ of the numerical solution to (\ref{lkdveq}) along (a) $x=s,t=0.1$,  (b) $x=0.1, t=s$, (c) $x=3s,t=s$ for $s\in [0.1,10^5]$. The computation using $N=20$ points for each segment in the contour (dotted) and using $N=40$ points for each segment in the contour (dashed) are plotted.}
  \label{lkdverr}
\end{figure}

\begin{figure}
  % Requires \usepackage{graphicx}
  \makebox[\textwidth][c]{
  \includegraphics[width=0.9\textwidth]{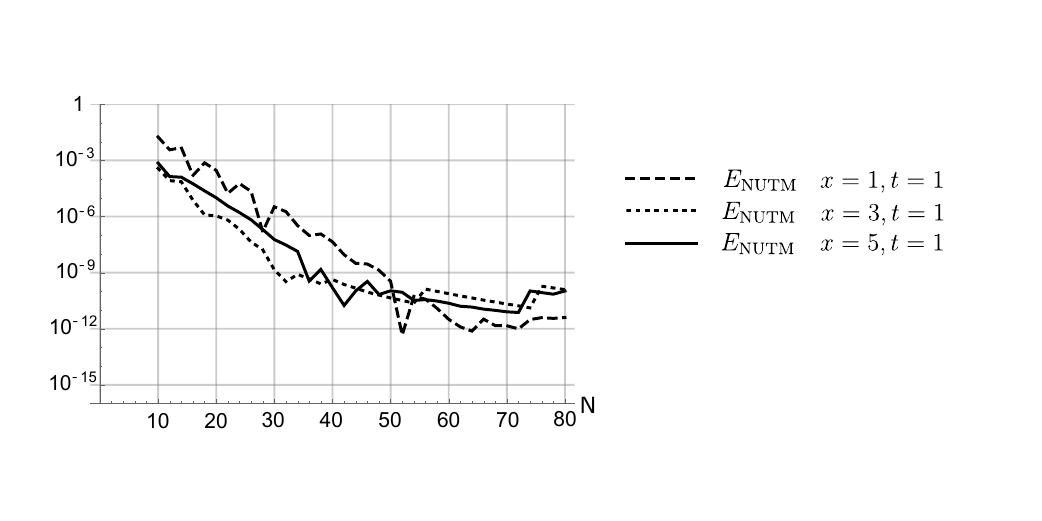}
  }
  \caption{The absolute error $E_{\text{NUTM}}$ against the number of collocation points $N$ per segment: computed with  $x=1, t=1$ (dashed), $x=3,t=1$ (dotted) and $x=5,t=1$ (solid).}
  \label{lkdverr2}
\end{figure}

\begin{figure}
  % Requires \usepackage{graphicx}
  \makebox[\textwidth][c]{
 \begin{subfigure}{0.55\textwidth} 
  \includegraphics[width=1.\textwidth]{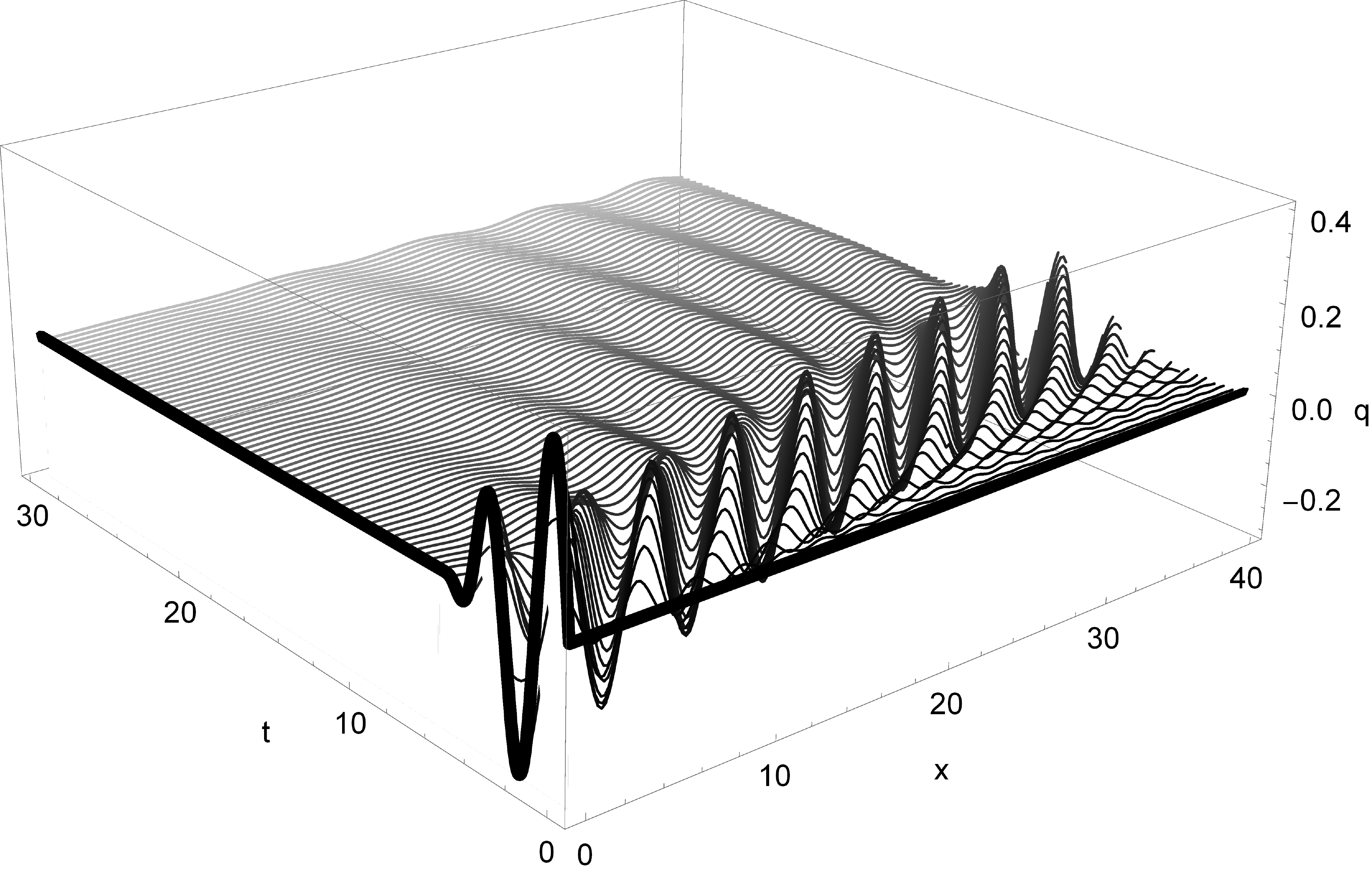}
\caption{}
\end{subfigure}

 \begin{subfigure}{0.41\textwidth} 
    \includegraphics[width=1.\textwidth]{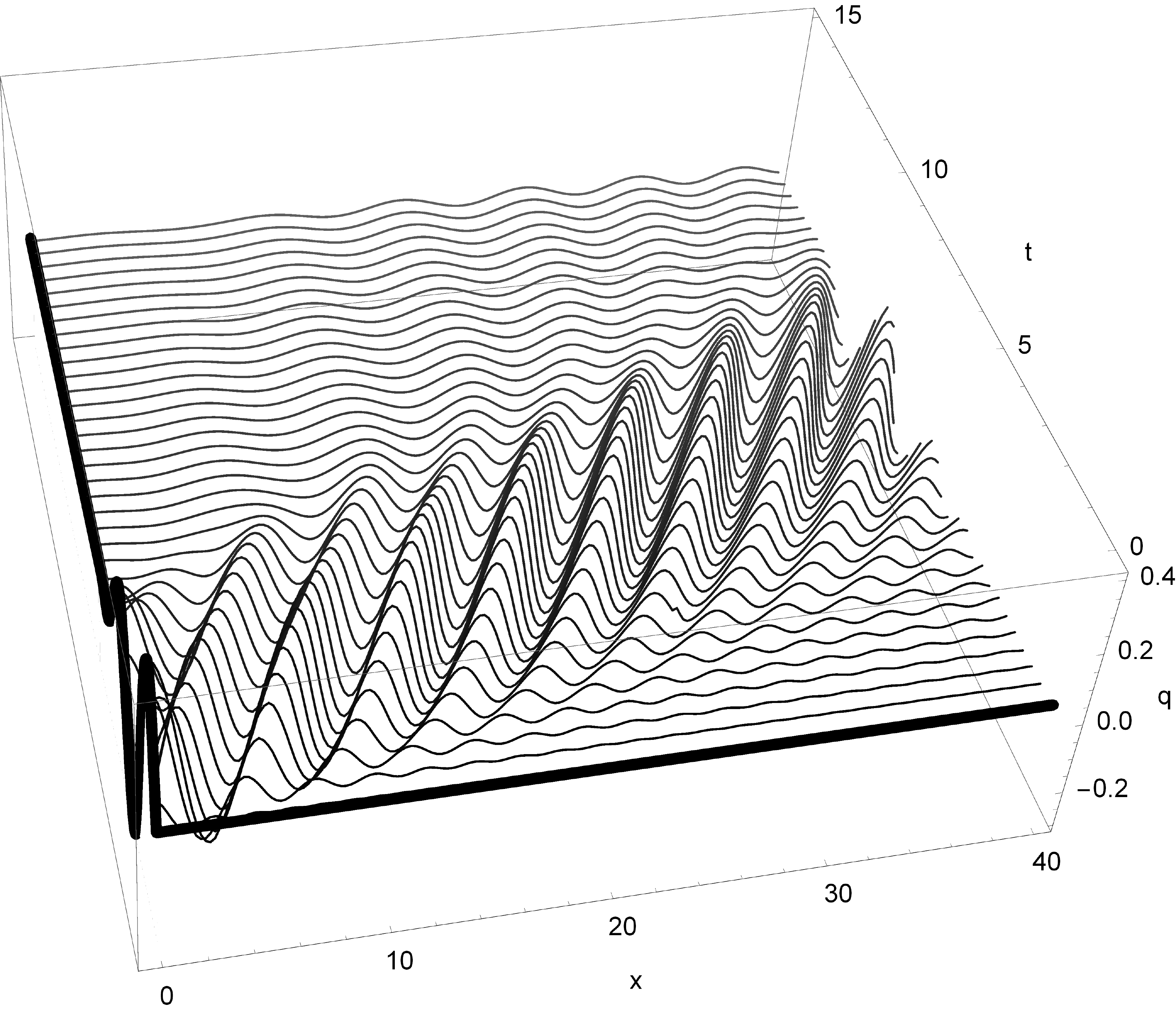}
\caption{}
\end{subfigure}

  }
  \caption{The numerical solution of (\ref{lkdveq}) with $g_0=\sin(2t) \phi(t/(2\pi))$ with $\phi(t)$ defined in (\ref{molifier}). The bold curves are the initial and Dirichlet boundary conditions. For small $t$, dispersive waves emanate from the boundary but the waves start to turn back because of advection as $t$ grows. Panel (a) shows the solution for $x\in [0,40],~t\in [0,30]$. Panel (b) shows the solution from a different angle in a shorter time interval $t\in[0,15]$.}
  \label{lkdvsol2}
\end{figure}

\section*{Acknowledgments}
The authors gratefully acknowledge support from the US National Science Foundation under grants NSF-DMS-1522677 (BD,XY), NSF-DMS-1753185 (TT) and NSF-DMS-1945652 (TT). Any opinions, findings, and conclusions or recommendations expressed in this material are those of the authors and do not necessarily reflect the views of the funding sources.

\section*{Appendix: The proof of uniform convergence of the NUTM applied to the heat equation.}
\label{sec_proof}
In this appendix, we prove the uniform convergence for Clenshaw-Curtis quadrature applied to the contour integrals for the heat equation in Section \ref{sec_heatdeform}. We use the following result to estimate the error of Clenshaw-Curtis quadrature. The constant $K$ for the integrals $I_1,~I_2$ and $B_0$ is given in Theorem \ref{thmi1} and Theorem~\ref{thmi3}.
\begin{theorem}[See \cite{trogdon}, for example.]
Let $u(k;x,t)$ be so that for $m=0,1,\ldots,M$, $\partial_k ^m u(k;x,t)$ are absolutely continuous for fixed $x,t$ and satisfy $\sup_{k\in [-1,1]} \abs{\partial_k^{M+1} u(k;x,t)}\leq K$ for all $x,t$. Define $i(u(\cdot;x,t))=\int u(k;x,t)dk$ and $i_n(u(\cdot;x,t))$ to be the approximation of $i(u(\cdot;x,t))$ obtained with Clenshaw-Curtis quadrature. Then $i_n(u(\cdot;x,t))$ converges to $i(u(\cdot;x,t))$ uniformly in $x,t$. More precisely, there exists $N>0$ such that for $n>N$,
\[
\sup_{x,t} \abs{i(u(\cdot;x,t))-i_n(u(\cdot;x,t))}\leq \frac{32K}{15M(2n+1-M)^M}.
\]
\end{theorem}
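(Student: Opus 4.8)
The plan is to reduce the estimate to the classical analysis of Clenshaw-Curtis quadrature on $[-1,1]$ through Chebyshev expansions, and then to track the smoothness hypothesis carefully enough that every constant depends on $u(\cdot\,;x,t)$ only through $K$; the uniformity in $x,t$ is then automatic. Throughout, $i(u(\cdot;x,t))=\int_{-1}^{1}u(k;x,t)\,dk$, this being the canonical interval to which the UTM integrals are mapped after truncation and rescaling. First I would write the Chebyshev expansion $u(k;x,t)=\sum_{j\ge0}a_j(x,t)T_j(k)$, with termwise integration justified by the assumed absolute continuity, so that $i(u(\cdot;x,t))=\sum_{j\ge0}a_j(x,t)\mu_j$ with $\mu_j=\int_{-1}^1 T_j=2/(1-j^2)$ for even $j$ and $\mu_j=0$ for odd $j$. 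The $(n{+}1)$-point Clenshaw-Curtis rule integrates the degree-$n$ Chebyshev interpolant of $u(\cdot;x,t)$ at the nodes $\cos(\ell\pi/n)$; because $T_j$ restricted to those nodes coincides with $T_{k(j)}$, where $k(j)\in\{0,\dots,n\}$ is the unique index with $k(j)\equiv\pm j\pmod{2n}$, one gets $i_n(u(\cdot;x,t))=\sum_{j\ge0}a_j(x,t)\mu_{k(j)}$ (up to the usual endpoint conventions at $j=0,n$). Since $k(j)=j$ for $j\le n$, subtracting gives the error formula
\[
i(u(\cdot;x,t))-i_n(u(\cdot;x,t))=\sum_{j>n}a_j(x,t)\,\big(\mu_j-\mu_{k(j)}\big)=:\sum_{j>n}a_j(x,t)\,e_j^{(n)},
\]
with $e_j^{(n)}=0$ for odd $j$ and $\abs{e_j^{(n)}}\le\abs{\mu_j}+\abs{\mu_{k(j)}}$ otherwise, so the error weights are uniformly bounded; regrouping this series by the value of $k(j)$ and using the $\pmod{2n}$ periodicity is what eventually yields the numerical constant $32/15$ and the shifted denominator $2n+1-M$.

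The analytic input is the decay of $a_j(x,t)$. Since $\partial_k^m u(\cdot;x,t)$ is absolutely continuous for $m\le M$ and $\sup_{[-1,1]}\abs{\partial_k^{M+1}u(\cdot;x,t)}\le K$, the total variation of $\partial_k^{M}u(\cdot;x,t)$ on $[-1,1]$ is at most $\int_{-1}^1\abs{\partial_k^{M+1}u}\,dk\le 2K$. Substituting $k=\cos\vartheta$ turns $a_j(x,t)$ into a Fourier cosine coefficient of $u(\cos\vartheta;x,t)$; integrating by parts $M$ times, the last step using the bounded variation of $\partial_k^M u$, gives the classical estimate
\[
\abs{a_j(x,t)}\le \frac{2\cdot 2K}{\pi\,j(j-1)\cdots(j-M)}\le \frac{4K}{\pi\,(j-M)^{M+1}},\qquad j\ge M+1,
\]
with a constant independent of $x,t$ since it sees $u(\cdot;x,t)$ only through $K$. (Equivalently, one may simply cite the standard Chebyshev-coefficient decay theorem for functions whose $M$-th derivative has bounded variation.)

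Finally I would combine the two ingredients. From the error formula and the coefficient bound,
\[
\abs{i(u(\cdot;x,t))-i_n(u(\cdot;x,t))}\le \sum_{j>n}\abs{e_j^{(n)}}\,\abs{a_j(x,t)}\le \frac{4K}{\pi}\sum_{j>n}\frac{\abs{e_j^{(n)}}}{(j-M)^{M+1}},
\]
and it remains to estimate the sum: inserting the bound on $\abs{e_j^{(n)}}$, regrouping the terms by $k(j)$, and comparing the leftover $j$-sum with $\int_n^\infty(j-M)^{-(M+1)}\,dj=\tfrac{1}{M(n-M)^M}$---the $\pmod{2n}$ period being responsible for upgrading $n-M$ to $2n+1-M$---yields $\frac{32K}{15M(2n+1-M)^M}$, valid once $n$ is large enough that $2n+1-M>0$, which is where the threshold $N$ enters. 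Since the right-hand side is free of $x,t$, taking $\sup_{x,t}$ gives the claimed uniform convergence. I expect the main obstacle to be not analytic but a matter of careful bookkeeping in the first and third steps: extracting the sharp constant $32/15$ and the precise denominator $2n+1-M$, rather than the easier $\mathcal{O}(K\,n^{-M})$ bound one gets immediately from $\sup_j\abs{e_j^{(n)}}\cdot\sum_{j>n}\abs{a_j(x,t)}$. The Chebyshev-coefficient decay and the uniformity in $(x,t)$ are routine.
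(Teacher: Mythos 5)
The paper does not actually prove this statement: it is imported verbatim from the cited reference \cite{trogdon} (which in turn rests on the analysis in \cite{trefethen2008}), so there is no in-paper argument to compare yours against. Your reconstruction is the standard and correct route to this bound: the aliasing identity $i_n(u)=\sum_j a_j\mu_{k(j)}$ with $k(j)\equiv\pm j\pmod{2n}$, the error formula $\sum_{j>n}a_j(\mu_j-\mu_{k(j)})$, and the coefficient decay $\abs{a_j}\le 2V/\bigl(\pi j(j-1)\cdots(j-M)\bigr)$ with $V=\mathrm{Var}(\partial_k^M u)\le 2K$ are exactly the ingredients of Trefethen's Clenshaw--Curtis theorem, and the uniformity in $(x,t)$ is, as you say, automatic because every constant enters only through $K$. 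The one step you defer --- extracting $32/15$ and the denominator $(2n+1-M)^M$ from the regrouped aliasing sum --- is genuinely the delicate part (it is where the restriction to ``sufficiently large $n$,'' i.e.\ the threshold $N$, comes from, since coefficients with $n<j\le 2n$ alias down to low-order $\mu_{k(j)}$ of size $O(1)$ and must be beaten by the decay of $a_j$), but it is a known computation rather than a gap in the idea; moreover the paper's constant $32K/15$ is a weakening of the sharper bound $32V/\bigl(15\pi M(2n+1-M)^M\bigr)\le 64K/\bigl(15\pi M(2n+1-M)^M\bigr)$, so your estimate $\frac{4K}{\pi}\sum_{j>n}\abs{e_j^{(n)}}(j-M)^{-(M+1)}$ has a factor of roughly $\pi/2$ of slack and even moderately crude bookkeeping closes it. I would only ask you to state explicitly that the odd-indexed error weights vanish because $k(j)$ preserves parity, and to carry the $n<j\le 2n+1$ block separately from the $j>2n+1$ tail when you do the final sum, since those two pieces are controlled by different mechanisms.
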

In Theorem \ref{thmi1} and Theorem \ref{thmi3}, we estimate the upper bound $K$ for each part of the integral in (\ref{heatsolintg}). 
The uniform convergence is considered in the domain bounded away from the $t=0$ and $x=0$. For $c>0$, we  define the region,  
\[
\Omega_{c}=\{(x,t): ~ x\geq c, ~ t\geq c \}.
\]
 
\begin{theorem}[Uniform convergence of $I_1$ and $I_2$ in (\ref{heatsolintg}) for the heat equation]
For any $\delta,\epsilon,c>0$, assume $q_0\in C_\delta^{\infty}$ and let $I_1^{\epsilon}$ be the truncation of the integral \footnote{The truncation depends on the prescribed tolerance $\epsilon$. As $\hat{q}_0$ is bounded on the contour, we can use the exponential to get a good choice for the trunction. See the proof for how the truncation is done.} 
\begin{align}
I_1 = \frac{1}{2\pi}\int_{\mathcal{C}^I_1} e^{ikx-\omega(k)t}\hat{q}_0(k)dk,
\label{i1heat}
\end{align}
such that  
\[
\sup_{(x,t)\in \Omega_{c}} \abs{I_1-I_1^{\epsilon}}< C_1(q_0,\delta,c) \epsilon,~~~C_1(q_0,\delta,c)>0.
\]
Then Clenshaw-Curtis quadrature applied to $I_1^{\epsilon}$ converges uniformly on $\Omega_{c}$. 
Hence $\mathcal{C}^I_1=\{a+ ih:a\in \mathbb{R}\}$ and $h=\min(x/2t,\delta)$ is  as defined in Section \ref{sec_heatj1}. Similarly, with the same assumptions, let $I_2^{\epsilon}$ be the truncation of the integral 
\begin{align}
I_2 = -\frac{1}{2\pi}\int_{\mathcal{C}^I_2} e^{ikx-k^2 t}\hat{q}_0(-k)dk,
\label{i2heat}
\end{align}
such that  
\[
\sup_{(x,t)\in \Omega_{c}} \abs{I_2-I_2^{\epsilon}}< C_2(q_0,\delta,c) \epsilon,~~~C_2(q_0,\delta,c)>0.
\]
Then Clenshaw-Curtis quadrature applied to $I_2^{\epsilon}$ converges uniformly on $\Omega_{c}$. 
Hence $\mathcal{C}^I_2=\{a+ ix/2t:a\in \mathbb{R}\}$.
\label{thmi1}
\end{theorem}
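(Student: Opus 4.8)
The plan is to parametrize each contour by a real variable, exploit the fact that $\theta$ restricted to such a straight contour is a quadratic polynomial, and then feed a truncated and rescaled integrand into the Clenshaw--Curtis error bound quoted above. I would carry out the argument for $I_1$; $I_2$ is the special case in which the contour is forced through the saddle and $\hat{q}_0(k)$ is replaced by $\hat{q}_0(-k)$, so the same estimates apply verbatim.

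First I would write $k = a + ih$ on $\mathcal{C}_1^I$, with $h = \min(x/2t,\delta)$, so that
\[
\theta(a+ih;x,t) = \alpha(x,t) + i\beta_0(x,t)\,a - t\,a^2, \qquad \alpha = h(ht - x) \le 0, \qquad \beta_0 = x - 2ht,
\]
and $\abs{e^{\theta}} = e^{\alpha}e^{-ta^2} \le e^{-ta^2}$. Since $q_0 \in C_\delta^{\infty}$, the definition $\hat{q}_0(k)=\int_0^{\infty}e^{-ikx}q_0(x)\,dx$ together with the decay of $q_0$ gives, for every $\ell$, $\sup_{\im{k}\le\delta}\abs{\hat{q}_0^{(\ell)}(k)} =: M_\ell < \infty$. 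On $\Omega_c$ one then has the uniform envelope $\abs{e^{\theta}\hat{q}_0} \le M_0 e^{-ca^2}$, so truncating to $\abs{a} \le \rho/\sqrt t$ leaves a tail bounded by $M_0 c^{-1/2}\int_{\abs{u}>\rho}e^{-u^2}\,du$; choosing $\rho = \rho(\epsilon,c)$ makes this $< C_1(q_0,\delta,c)\,\epsilon$ uniformly over $\Omega_c$, which is the claimed estimate for $I_1^{\epsilon}$. Next I would rescale $a = \rho s/\sqrt t$, $s \in [-1,1]$, so that $I_1^{\epsilon} = \frac{1}{2\pi}\int_{-1}^{1} u(s;x,t)\,ds$ with $u(s;x,t) = \frac{\rho}{\sqrt t}\,e^{\theta(\rho s/\sqrt t + ih;x,t)}\hat{q}_0(\rho s/\sqrt t + ih)$; as a function of $s$ the phase becomes $\theta(s) = \alpha + i\frac{\rho\beta_0}{\sqrt t}\,s - \rho^2 s^2$, a quadratic whose leading coefficient $-\rho^2$ no longer depends on $(x,t)$.

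It then remains to bound $\sup_{s\in[-1,1]}\abs{\partial_s^{M+1}u(s;x,t)}$ uniformly on $\Omega_c$ and invoke the quoted theorem with the resulting constant $K$. By the Leibniz rule it suffices to control $\partial_s^j e^{\theta(s)}$ and $\partial_s^{\ell}\hat{q}_0(\rho s/\sqrt t + ih)$ with $j+\ell = M+1$. For the latter, $\partial_s^{\ell}\hat{q}_0(\cdots) = (\rho/\sqrt t)^{\ell}\hat{q}_0^{(\ell)}(\cdots)$, and since $\im{\rho s/\sqrt t + ih} = h \le \delta$ and $\rho/\sqrt t \le \rho/\sqrt c$, this is bounded by $(\rho/\sqrt c)^{\ell}M_\ell$. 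For the former, write $\partial_s^j e^{\theta(s)} = Q_j(s)e^{\theta(s)}$ with $Q_0 = 1$ and $Q_{j+1} = Q_j' + (i\rho\beta_0/\sqrt t - 2\rho^2 s)Q_j$; induction gives $\abs{Q_j(s)} \le C_j(\rho\abs{\beta_0}/\sqrt t + \rho^2 + 1)^j$ on $[-1,1]$, while $\abs{e^{\theta(s)}} = e^{\alpha}e^{-\rho^2 s^2} \le e^{\alpha}$. Hence everything reduces to showing $(\rho\abs{\beta_0}/\sqrt t + \rho^2 + 1)^j\,e^{\alpha}$ is bounded uniformly on $\Omega_c$ for each fixed $j$, which I would prove by cases in the definition of $h$: if $h = x/2t$ then $\beta_0 = 0$ and the quantity is just $(\rho^2+1)^j$; if $h = \delta$ then $x > 2\delta t$, $\alpha = -\delta y - \delta^2 t$ with $y := x - 2\delta t > 0$ and $\abs{\beta_0} = y$, so
\[
(\rho y/\sqrt t)^j e^{\alpha} \le \rho^j c^{-j/2}\Big(\sup_{y>0} y^j e^{-\delta y}\Big)e^{-\delta^2 c},
\]
and every term of the binomial expansion is bounded in the same way. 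This yields a finite $K = K(M,q_0,\delta,c,\epsilon)$, valid for every $M$, so the cited theorem gives uniform (indeed spectral, on $\Omega_c$) convergence of Clenshaw--Curtis quadrature applied to $I_1^{\epsilon}$. For $I_2$, repeat the computation with $k$ on $\mathcal{C}_2^I$, where $h = x/2t$ forces $\beta_0 = 0$ and $\alpha = -x^2/4t \le 0$, and with $\hat{q}_0(-k)$, which together with all its derivatives is bounded on $\{\im{-k}\le 0\}\subset\{\im{k}\le\delta\}$.

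The step I expect to be the main obstacle is the uniform bound on $(\rho\abs{\beta_0}/\sqrt t + \rho^2 + 1)^j e^{\alpha}$ in the off-saddle regime $h = \delta$: this is precisely the point where a potentially large oscillation frequency $\rho\abs{\beta_0}/\sqrt t \sim x/\sqrt t$ of the integrand must be absorbed by the exponentially small prefactor $e^{\alpha} = e^{-\delta(x-2\delta t)-\delta^2 t}$, using $\sup_{y>0}y^j e^{-\delta y}<\infty$ and $t\ge c$. The remaining ingredients --- the quadratic structure of $\theta$ on straight contours, boundedness of $\hat{q}_0^{(\ell)}$ on a horizontal strip, and the truncation estimate --- are routine.
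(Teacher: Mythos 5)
Your proposal is correct and follows essentially the same route as the paper's proof: truncate using the Gaussian decay $e^{-ta^2}$ of $\abs{e^{\theta}}$ on the horizontal contour, rescale the truncated interval to $[-1,1]$, and bound the derivatives of the rescaled integrand uniformly on $\Omega_c$ by splitting into the cases $h=x/2t$ and $h=\delta$, absorbing the polynomial growth in $x,t$ into the decaying factor $e^{\alpha}$ (the paper's $e^{-x^2/4t}$ and $e^{-\delta x/2}$), before invoking the quoted Clenshaw--Curtis error bound. The only differences are cosmetic: you treat derivatives of arbitrary order $M$ rather than just the second derivative, and you parametrize the truncation by the fixed scale $\rho=L\sqrt{t}$ instead of by $L$ with $e^{-L^2t}=\epsilon$, which amounts to the same choice.
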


\begin{theorem}[Uniform convergence of $B_0$ in (\ref{heatsolintg}) for the heat equation]
For any $\gamma,\epsilon,c>0$, assume $g_0\in C_\gamma^{\infty}$ and let $B_0^{\epsilon}$ be the  truncation of the integral \footnote{As with Theorem 2, the truncation procedure is described in the proof.}
\begin{align}
B_0 = \frac{1}{\pi}\int_{\mathcal{C}^B_{0,a}} e^{ikx-k^2 t} 2ik\tilde{g}_0(k^2,t) dk+\frac{1}{2\pi}\int_{\mathcal{C}^B_{0,b}+\mathcal{C}^B_{0.c}} e^{ikx-k^2 t} 2ik\tilde{g}_0(k^2,t) dk,
\label{i3heat}
\end{align}
such that  
\[
\sup_{(x,t)\in \Omega_{c}} \abs{B_0-B_0^{\epsilon}}<C(g_0,\gamma,c)\epsilon,~~~C(g_0,\gamma,c)>0.
\]
Then Clenshaw-Curtis quadrature applied to $B_0^{\epsilon}$ converges uniformly on $\Omega_{c}$. The contour is defined in Section \ref{sec_heati3} where $\mathcal{C}^B_{0,a}= \{La+ix/2t:a\in [0,1], e^{-L^2t}=\epsilon \}$ is the horizontal segment of the contour and  $\mathcal{C}^B_{0,b}= \{L+ix/2t+L_2 e^{i\pi/4}a: a\in [0,\infty), e^{-L_2 x}=\epsilon \}$, $\mathcal{C}^B_{0,c}= \{-L+ix/2t+L_2 e^{-i\pi/4}a: a\in (-\infty,0], e^{-L_2 x}=\epsilon \}$ are the oblique segments of the contour with given tolerance $\epsilon>0$.  
\label{thmi3}
\\
\end{theorem}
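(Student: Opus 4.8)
The plan is to reduce the assertion to the quoted Clenshaw--Curtis error estimate. Writing $B_0^\epsilon$ as a finite sum of integrals over smooth arcs and mapping each arc affinely onto $[-1,1]$, it suffices to exhibit, for every fixed $M$, a constant $K=K(g_0,\gamma,c,\epsilon,M)$ such that $\sup_{\xi\in[-1,1]}\abs{\partial_\xi^{M+1}u(\xi;x,t)}\le K$ for all $(x,t)\in\Omega_{c}$, where $u$ is the corresponding rescaled integrand; this is the template used to prove Theorem~\ref{thmi1}, and the genuinely new features here are the oblique segments $\mathcal{C}^B_{0,b},\mathcal{C}^B_{0,c}$ and the appearance of $t$ inside $\tilde g_0(\omega(k),t)$ as well as in $\theta(k;x,t)=-t(k-ix/2t)^2-x^2/4t$. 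I would begin by disposing of the orientation bookkeeping responsible for the $\tfrac1\pi$ and $\tfrac1{2\pi}$ prefactors in~(\ref{i3heat}): the integrand $f(k)=e^{ikx-k^2t}2ik\tilde g_0(k^2,t)$ satisfies $f(-\bar k)=\overline{f(k)}$, so the left half of the horizontal segment contributes the conjugate of the right half.

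Next I would record the pointwise bounds on the contour. Using the splitting~(\ref{twoterm2}) together with the exponent cancellation noted in Section~\ref{sec_heati3} (the $rx/\sqrt2$ contributions of $\re{ikx}$ and $\re{-k^2t}$ cancel along the oblique rays), on the horizontal piece $k=a+ix/2t$ one gets $\abs{f(k)}\le 2\abs k\,\norm{g_0}_{L^1}\bigl(e^{-ta^2-x^2/4t}+e^{-x^2/2t}\bigr)$, and on an oblique ray $k=(\pm L+ix/2t)+re^{\pm i\pi/4}$ one gets $\abs{f(k)}\lesssim \abs k\,\norm{g_0}_{L^1}\,e^{-x^2/4t}\bigl(e^{-\sqrt2\,tLr}+e^{-rx/\sqrt2}\bigr)$. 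Since $x,t\ge c$ on $\Omega_{c}$, the arc-length parameter of each oblique ray can be truncated at a value that is $\mathcal O(\abs{\log\epsilon})$ and independent of $(x,t)$, the discarded tails being $\le C(g_0,c)\,\epsilon$; and if the contour of Section~\ref{sec_heati3} with $L=\max(L_1,\sqrt\gamma)$ is used, the horizontal sub-segments $L_1\le\abs a\le\sqrt\gamma$ (present only when $L=\sqrt\gamma>L_1$) may also be discarded, because there $e^{-ta^2}\le e^{-L_1^2t}=\epsilon$ while $\abs{\tilde g_0(k^2,t)}\le\int_0^\infty e^{\gamma s}\abs{g_0(s)}\,ds<\infty$ (which is why one takes $L\ge\sqrt\gamma$). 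This yields a finite contour --- a horizontal segment of half-length $L_1=\sqrt{\abs{\log\epsilon}/t}$ and two bounded oblique segments --- with $\sup_{\Omega_{c}}\abs{B_0-B_0^\epsilon}<C(g_0,\gamma,c)\,\epsilon$.

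Then I would rescale. On the surviving contour the natural parametrisations already normalise the relevant scales: on the horizontal piece $tL_1^2=\abs{\log\epsilon}$ (bounded on $\Omega_{c}$), and on each oblique piece $\abs{k'(\xi)}=\mathcal O(\abs{\log\epsilon}/x)=\mathcal O(1)$. Writing $u=J\cdot e^{\theta(k(\xi))}\,2ik(\xi)\,\tilde g_0(k(\xi)^2,t)$ and applying Leibniz's rule, I would estimate $\partial_\xi^{M+1}u$ termwise: $\partial_\xi^j e^\theta=e^\theta$ times a Hermite-type polynomial in $\partial_\xi\theta$ (equal to $-2tL_1^2\xi$, bounded, on the horizontal piece, and $\theta'(k)k'(\xi)$ on the oblique pieces); $\partial_\xi^j(2ik)=0$ for $j\ge2$; and $\partial_\xi^j\tilde g_0(k^2,t)=\int_0^t\partial_\xi^j(e^{k^2s})g_0(s)\,ds$ is handled by pulling out the Hermite-type polynomial in $2k\,k'(\xi)\,s$ and absorbing its $s$-powers into $\int_0^\infty s^j e^{\max(0,\re{k^2})s}\abs{g_0(s)}\,ds$, finite by the decay of $g_0$. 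The resulting polynomial-in-$(x,t)$ factors are absorbed by the genuine decay in $u$ via the following dichotomy on the horizontal piece: where $\re{k^2}>0$, necessarily $x^2/t\lesssim\abs{\log\epsilon}$, so the only dangerous factor $\epsilon^{-1}e^{-x^2/4t}$ (coming from $\abs{\tilde g_0(k^2,t)}\le\epsilon^{-1}e^{-x^2/4t}\norm{g_0}_{L^1}$) is itself $\mathcal O(1)$ and all powers of $x/\sqrt t$ are bounded; where $\re{k^2}\le0$, no such factor occurs and the global Gaussian $e^{-x^2/4t}=e^{-(x/\sqrt t)^2/4}$ dominates every power of $x/\sqrt t$. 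The oblique pieces are analogous, with $e^{-rx/\sqrt2}$ and $e^{-\sqrt2\,tL_1r}$ (and $r$ bounded) supplying the decay. This produces the required $K$, uniformly in $(x,t)\in\Omega_{c}$, and the quoted Clenshaw--Curtis estimate applied to each of the finitely many rescaled integrals and summed completes the proof.

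The crux --- and the step I expect to be most delicate --- is precisely this uniformity of $K$ as $x$ and $t$ grow together: one must check that \emph{every} polynomial-in-$(x,t)$ factor manufactured by differentiation (powers of $\abs k\lesssim L_1+x/2t$, of $tL_1^2=\abs{\log\epsilon}$, of $\theta'(k)$, and of the $s$-moments of $g_0$) is matched by a genuine exponential weight, for which the dichotomy above and, if the Section~\ref{sec_heati3} contour is used, the regime split $L_1\gtrless\sqrt\gamma$ (equivalently $t\gtrless\abs{\log\epsilon}/\gamma$) must be carried through carefully; the two-term decomposition~(\ref{twoterm2}) is what makes these bounds transparent, but all constants must be tracked to confirm they depend only on $g_0,\gamma,c,\epsilon$ and the order $M$.
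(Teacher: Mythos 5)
Your proposal follows the same overall route as the paper's proof: truncate each segment with an $\mathcal{O}(\epsilon)$ tail estimate, map the surviving pieces to $[-1,1]$, bound derivatives of the rescaled integrand uniformly on $\Omega_{c}$, and invoke Theorem 1; your symmetry reduction, your pointwise bounds on the oblique rays (using the cancellation $\re{ikx}+\re{-k^2t}=-x^2/4t-tL^2-\sqrt{2}\,tLr$), and your truncation estimates all match what the paper does. The one substantive difference is how $\tilde g_0(k^2,t)$ and its $k$-derivatives are controlled. The paper writes $g_0(s)=e^{-\gamma s}\bigl(e^{\gamma s}g_0(s)\bigr)$, bounds the bracketed factor by $\norm{g_0 e^{\gamma(\cdot)}}_{\infty}$, and integrates the remaining $e^{(k^2-\gamma)s}$ in closed form; this produces the rational factors with denominator powers of $(k^2-\gamma)$ appearing in the proof and, after multiplying by $e^{ikx-k^2t}$, exactly the two exponentials $e^{-a^2t-x^2/4t}$ and $e^{-\gamma t-x^2/2t}$, the second of which supplies genuine decay in $t$ that absorbs every polynomial factor generated by differentiation. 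You instead differentiate under the integral sign and control $s$-moments of $g_0$. That can be made to work, but your displayed bound $\int_0^\infty s^j e^{\max(0,\re{k^2})s}\abs{g_0(s)}\,ds$ is not finite on all of the contour: $g_0\in C_\gamma^{\infty}$ only guarantees decay at some rate $\gamma'>\gamma$, while on the horizontal segment $\re{k^2}$ can reach $L^2=\max\bigl(\abs{\log\epsilon}/t,\gamma\bigr)$, and for $t$ near $c$ the value $\abs{\log\epsilon}/t$ exceeds any such $\gamma'$, so the $[0,\infty)$ moment integral diverges. The repair is to keep the $s$-integral over $[0,t]$ and pair it with the prefactor $e^{-\re{k^2}t}$ before estimating (which is what your dichotomy implicitly does, and what the paper's closed-form antiderivative accomplishes automatically); with that modification, and the careful tracking you already flag as the delicate step, your argument reaches the same uniform bound $K$ and the conclusion follows from Theorem 1 as in the paper.
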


%\begin{theorem}[Uniform convergence of $I_2$ in (\ref{heatsolintg})]
%Assume $q_0\in C_\delta^{\infty}$, for any $\delta,~\epsilon>0$,  Clenshaw-Curtis quadrature applied to
%\begin{align}
%I_2 = -\frac{1}{2\pi}\int_{\mathcal{C}^I_2} e^{ikx-k^2 t}\hat{q}_0(-k)dk,
%\label{i2heat}
%\end{align}
%after rescaling and truncation converges uniformly for $(x,t)\in \Omega_{\delta}$ to $I_2^{\epsilon}$ 
%where $I_2^{\epsilon}$ satisfies 
%\[
%\sup_{(x,t)\in \Omega_{\delta}} \abs{I_2-I_2^{\epsilon}}<\epsilon.
%\]
%The contour is $\mathcal{C}^I_2=\mathbb{R}+ ix/2t$.
%\label{thmi2}
%\end{theorem}

\begin{proof}[Proof of Theorem \ref{thmi1}]
For given tolerance $\epsilon>0$, $I_1$ is truncated to $I_1^{\epsilon}$ of length $2L$ with  $e^{-L^2t}=\epsilon$. We introduce the change of variables $k=La+ih$. The integral with $a>1$ is cut off,
\begin{align*}
I_1 = &\frac{Le^{-hx}}{2\pi}\int_{-\infty}^{\infty} e^{iLax-(La+ih)^2 t}\hat{q}_0(La+ih)da\\
    = &\frac{Le^{-hx}}{2\pi}\int_{-1}^{1} e^{iLax-(La+ih)^2 t}\hat{q}_0(La+ih)da
    + \frac{Le^{-hx}}{2\pi}\int_{\abs{a}>1} e^{iLax-(La+ih)^2 t}\hat{q}_0(La+ih)da\\
    =& I_1^{\epsilon}+\frac{Le^{-hx}}{2\pi}\int_{\abs{a}>1} e^{iLax-(La+ih)^2 t}\hat{q}_0(La+ih)da.
\end{align*}
The second integral is dropped and the induced truncation error is bounded by
\begin{align*}
\abs{\frac{Le^{-hx}}{2\pi}\int_{\abs{a}>1} e^{iLax-(La+ih)^2 t}\hat{q}_0(La+ih)da} 
\leq & \frac{Le^{-hx}}{2\pi}\int_{\abs{a}>1} \abs{ e^{iLax-(La+ih)^2 t}\hat{q}_0(La+ih)}da \\
\leq & \frac{Le^{-hx}}{2\pi}\int_{\abs{a}>1} e^{-L^2ta^2+h^2t}\abs{\hat{q}_0(La+ih)}da \\
%(\leq & \norm{\hat{q}_0(\cdot+ih)}_{\infty}\frac{Le^{-h(x-ht)}}{2\pi} \sqrt{\frac{\pi}{-\ln\epsilon}} \text{erfc}(\sqrt{-\ln\epsilon}) \text{gaussian integrated, not looks useful})\\
\leq & \norm{\hat{q}_0(\cdot+ih)}_{\infty}\frac{Le^{-h(x-ht)}}{2\pi} \epsilon \int_{\abs{a}>1} a e^{-L^2t(a^2-1)}da \\
\leq & \norm{\hat{q}_0(\cdot+ih)}_{\infty}\frac{Le^{-h(x-ht)}}{2\pi} \epsilon \int_{0}^{\infty}  e^{-L^2t s}ds  \\
\leq & \norm{\hat{q}_0(\cdot+ih)}_{\infty}\frac{Le^{-h^2t}}{2\pi}  \frac{\epsilon}{(-\ln \epsilon)}.
\end{align*}
Since $t$ is bounded from below, $L$ is bounded from above. The truncation error is therefore $\mathcal{O}(\epsilon)$, uniformly in $(x,t) \in \Omega_c$. 

Uniform convergence to $I_1^{\epsilon}$ requires the derivative of the integrand in $I_1^{\epsilon}$ to satisfy 
\[
\sup_{a\in [-1,1]}  \frac{Le^{-hx}}{2\pi}\abs{ \partial^2_a \left( e^{iLax-(La+ih)^2 t}\hat{q}_0(La+ih) \right) }\leq M,
\]
for all $x,t$. Notice that the derivatives of the exponential only introduce polynomial terms and $\hat{q}_0(k)$ is bounded and analytic in $\{k:\im{k}\leq \delta\}$ which implies that $\partial_k \hat{q}_0(k)$ and $\partial^2_k\hat{q}_0(k)$ are bounded on the contour. It suffices to show
\begin{align*}
\sup_{a\in [-1,1]}  \abs{ \partial^2_a \left( \frac{Le^{-hx}}{2\pi}
\cdot e^{iLax-(La+ih)^2 t}\hat{q}_0(La+ih) \right) } &\leq \sup_{a\in [-1,1]} \abs{   \frac{Le^{-hx}}{2\pi} \cdot e^{iLax-(La+ih)^2 t} P(a,Lx,L^2 t,Lht)} \\
&= \sup_{a\in [-1,1]}    \frac{Le^{-hx}}{2\pi}
\cdot e^{-L^2t a^2+h^2t }\abs{P(a,Lx,L^2 t,Lht)},
\end{align*}
where $P$ is a polynomial with positive coefficients.

When $h=x/2t\leq \delta$,
\begin{align*}
\sup_{a\in [-1,1]}    \frac{Le^{-hx}}{2\pi}
\cdot e^{-L^2t a^2+h^2t }\abs{P(a,Lx,L^2 t,Lht)}
  =&\sup_{a\in [-1,1]}  \frac{Le^{-x^2/4t-L^2t a^2}}{2\pi}  
  \abs{P(a,Lx,L^2 t,Lx/2) }\\
  \leq  & \frac{e^{-x^2/4t}}{2\pi} Q(x^2/4t)
\leq M_1 < \infty,
\end{align*}
where $Q$ is a polynomial with positive coefficients and we have used that $L^2t$ is constant.

When $h=\delta< x/2t $, 
\begin{align*}
\sup_{a\in [-1,1]}   \frac{Le^{-hx}}{2\pi}
\cdot e^{-L^2t a^2+h^2t } \abs{P(a,Lx,L^2 t,Lht)}  \leq&
\sup_{a\in [-1,1]}   e^{-\delta x/2-L^2t a^2 } \abs{P(a,Lx,L^2 t,Lx/2)} \\
\leq& \sup_{a\in [-1,1]}   e^{-\delta x/2 } Q_2(x) \leq M_2 < \infty,
\end{align*}
where $Q_2$ is a polynomial with positive coeffcients. As a result, the second derivative of the integrand of~(\ref{i1heat}) is uniformly bounded by $M=\max(M_1,M_2)$ independent of $x,t$. Together with the smoothness of the integrand, uniform convergence is obtained using Theorem 1.
We skip the calculation for $I_2$ as it follows the calculation for $I_1$.% except that in this case the contour passes through the saddle point $k=ix/2t$ and the oscillation of the integrand is turned to exponential decay.  
\end{proof}

\begin{proof}[Proof of Theorem \ref{thmi3}]
First, we prove the uniform convergence for the integral along $\mathcal{C}^B_{0,a}$. Introduce the
change of variables $k=La+ix/2t$. 
\begin{align*}
B_0|_{\mathcal{C}^B_{0,a}} = & \frac{1}{2\pi}\int_{\mathcal{C}^B_{0,a}}  e^{ikx-k^2 t}2k\tilde{g}_0(k^2,t) dk\\
    = &
    \frac{Le^{-x^2/2t}}{2\pi}\int_{-1}^{1} e^{iLax-(La+ix/2t)^2 t} 2(La+ix/2t)\tilde{g}_0((La+ix/2t)^2,t) da\\
    = &    \frac{Le^{-x^2/4t}}{\pi}\int_{-1}^{1} e^{-L^2t a^2}(La+ix/2t)\int_{0}^{t}e^{(La+ix/2t)^2s}g(s)ds da.
\end{align*}
Using Theorem 1, uniform convergence requires the boundedness of the second derivative of the integrand
\[
B_a= \sup_{a\in [0,1]}  \frac{Le^{-x^2/4t}}{\pi}\abs{\partial^2_a  e^{-L^2t a^2}(La+ix/2t)\int_{0}^{t}e^{(La+ix/2t)^2s-\gamma s}g(s)e^{\gamma s}ds  }\leq M,
\]
for all $(x,t) \in \Omega_c$. Since $\norm{g e^{\gamma (\cdot)}}_{\infty}<\infty$, after a lengthy computation, 
\begin{align*}
B_a\leq \sup_{a\in [0,1]}&    \frac{\norm{g e^{\gamma (\cdot)} }_{\infty}}{\abs{-4 a^2 t^2 + 4 \gamma t^2 - 4 i a t x + x^2}^3}\left(e^{-a^2t-x^2/4t} P_1+e^{-\gamma t-x^2/2t} P_2\right) ,
\end{align*}
where $P_1,P_2$ are polynomials in $x,t$ and $a$, with positive coefficients, and the growth for large $x,t$ is controlled by the exponential and the denominator in front of $P_1,P_2$. As a result $B_a\leq M$ and the integral on $\mathcal{C}^B_{0,a}$ is computed with uniform accuracy. %We then show the uniform convergence for the integral along $\mathcal{C}^B_{0,b}$. \\
Lastly, we show the uniform convergence for the integral along the oblique segment $\mathcal{C}^B_{0,b}$. The proof for the integral along $\mathcal{C}^B_{0,c}$ follows directly by symmetry. 
We introduce the change of variables $k=L+ix/2t+L_2 (1+i) a$. The integral with $a>1$ is separated,
\begin{align*}
B_0|_{\mathcal{C}^B_{0,b}} =  & \frac{L_2 (1+i)}{\pi} \left( \int_{0}^{1} + \int_{1}^{\infty} \right)  e^{ikx-k^2 t}k\tilde{g}_0(k^2,t)\Big|_{k=L+ix/2t+L_2 (1+i) a}da\\
    = &
    B_0^{\epsilon}|_{\mathcal{C}^B_{0,b}}+  \frac{L_2 (1+i)}{\pi} \int_{1}^{\infty}  e^{ikx-k^2 t}k\tilde{g}_0(k^2,t)\Big|_{k=L+ix/2t+L_2 (1+i) a}da.
\end{align*}
The second integral is dropped and the induced truncation error is bounded by
\begin{align*}
\abs{B_0|_{\mathcal{C}^B_{0,b}} -B_0^{\epsilon}|_{\mathcal{C}^B_{0,b}}} 
\leq & \frac{L_2 e^{-L^2t-x^2/(4t)}}{\pi} \int_{1}^{\infty} \abs{ e^{-2LL_2 ta}(Lt+L_2 ta+\frac{x}{2t}) \tilde{g}_0((L+ix/2t+L_2 (1+i) a)^2,t) }da \\ 
\leq & \frac{L_2 e^{-x^2/(4t)}\norm{g_0 e^{\gamma (\cdot)} }_{\infty} }{\pi} \int_{1}^{\infty} \abs{ \left(Lt+L_2 ta+\frac{x}{2t}\right) \frac{\left(e^{-  L_2 xa-x^2/(4 t)-\gamma  t }-e^{-L^2 t-2LL_2 ta}\right)}{( L -x/(2t)) (2 a L_2 + L +x/(2t))- \gamma} }da\\
\leq & e^{-x^2/(4 t)}\norm{g_0 e^{\gamma (\cdot)} }_{\infty}  \left(e^{-L_2 x -\gamma t}  P_3 + e^{-L^2 t-2LL_2 t} P_4\right),
\end{align*}
where $P_3,P_4$ are polynomials of $x,t$ with positive coefficients. Since the decaying exponentials dominate the growth of the polynomial, the truncation error is $\mathcal{O}(\epsilon)$, uniformly in $(x,t) \in \Omega_c$ with $e^{-L_2x}=\epsilon$ and $e^{-L^2t}=\epsilon$. 
Using Theorem 1, uniform convergence requires the boundedness of the second derivative of the integrand
\[
B_b=\sup_{a\in [0,1]}  \frac{L_2e^{i \pi/4}}{2\pi} \abs{ \partial^2_a \left(e^{ikx-k^2 t}2k\tilde{g}_0(k^2,t)\Big|_{k=L+ix/2t+L_2 e^{i\pi/4} a}   \right) }\leq M_b,
\]
for all $x,t$. After computing the derivatives, 
\begin{align*}
B_b \leq \sup_{a\in [0,1]}& \frac{\norm{g_0 e^{\gamma (\cdot)} }_{\infty}}{\abs{4 \gamma t^2 - (2 L t + (2 + 2 i) a L_2 t + i x)^2}^3} \left(e^{-L^2 t - 2 a L L_2 t - x^2/(4 t)} P_5 + e^{-\gamma t - a L_2 x - x^2/(2 t)} P_6 \right) ,
\end{align*}
where $P_5,P_6$ are polynomials of $x,t,a$ with positive coefficients. The poles are removable since the integrand is analytic in $k$. In this case, the exponentials dominate the growth of the polynomial. Hence, $B_b\leq M$. The second derivative of the integrand of (\ref{i1heat}) is uniformly bounded by $M$, independent of $x,t$. Together with the smoothness of the integrand, uniform convergence is obtained using Theorem 1.

\end{proof}

\end{document}